\documentclass[11pt,reqno]{amsart}

\usepackage[
  letterpaper,
  margin=0.95in
]{geometry}
\usepackage[T1]{fontenc}
\usepackage{lmodern}
\usepackage{microtype}
\usepackage{mathtools}
\usepackage{amssymb}
\usepackage{xcolor}
\usepackage{graphicx}
\usepackage{placeins}
\usepackage{aliascnt}
\usepackage[hidelinks]{hyperref}
\hypersetup{
  pdfauthor={Simon Becker and Izak Oltman},
  pdftitle={Spectra of Non-Self-Adjoint Almost Mathieu Matrices and the Scottish Flag Operator},
  pdfsubject={Phase dependence and limiting eigenvalue distributions for unit-modulus almost Mathieu matrices},
  pdfkeywords={almost Mathieu matrix, unit-modulus coupling, tridiagonal matrix, nonnormal spectrum, limiting eigenvalue distribution}
}
\graphicspath{{figures/}{./}}
\numberwithin{equation}{section}
\allowdisplaybreaks
\newtheorem{theorem}{Theorem}[section]
\newaliascnt{proposition}{theorem}
\newtheorem{proposition}[proposition]{Proposition}
\aliascntresetthe{proposition}
\newaliascnt{lemma}{theorem}
\newtheorem{lemma}[lemma]{Lemma}
\aliascntresetthe{lemma}
\newaliascnt{corollary}{theorem}
\newtheorem{corollary}[corollary]{Corollary}
\aliascntresetthe{corollary}
\theoremstyle{remark}
\newaliascnt{remark}{theorem}

\aliascntresetthe{remark}

\usepackage[nameinlink,capitalize,noabbrev]{cleveref}
\crefname{theorem}{theorem}{theorems}
\Crefname{theorem}{Theorem}{Theorems}
\crefname{proposition}{Proposition}{Propositions}
\Crefname{proposition}{Proposition}{Propositions}
\crefname{lemma}{lemma}{lemmas}
\Crefname{lemma}{Lemma}{Lemmas}
\crefname{corollary}{corollary}{corollaries}
\Crefname{corollary}{Corollary}{Corollaries}
\crefname{remark}{remark}{remarks}
\Crefname{remark}{Remark}{Remarks}

\newcommand{\C}{\mathbb C}
\newcommand{\R}{\mathbb R}
\newcommand{\Z}{\mathbb Z}
\newcommand{\e}{\mathrm e}
\newcommand{\spec}{\operatorname{spec}}
\newcommand{\diag}{\operatorname{diag}}
\newcommand{\tr}{\operatorname{tr}}
\newcommand{\rank}{\operatorname{rank}}
\newcommand{\cX}{\mathcal X}
\newcommand{\cG}{\mathcal G}
\newcommand{\push}{_{\#}}
\newcommand{\EllK}{\mathsf K}

\title[Non-self-adjoint almost Mathieu matrices]{Spectra of Non-Self-Adjoint Almost Mathieu Matrices and the Scottish Flag Operator}

\author{Simon Becker}
\address{Department of Decision Sciences, Bocconi University, Via Roentgen 1, 20136 Milan, Italy}
\email{simon.becker@unibocconi.it}
\author{Izak Oltman}
\address{Department of Mathematics, Northwestern University, Evanston, Illinois 60208, USA}
\email{ioltman@northwestern.edu}

\subjclass[2020]{Primary 15A18; Secondary 15B05, 47B36, 42C05}
\keywords{almost Mathieu matrix, unit-modulus coupling, tridiagonal matrix, Scottish flag operator, limiting eigenvalue distribution}
\date{}

\begin{document}

\begin{abstract}
For \(N\ge3\) and a potential phase
\(\vartheta\in\R\), we study the non-self-adjoint almost Mathieu
matrix that we obtain by multiplying the discrete Laplacian by a complex phase with angle \(\varphi\in\R\),
\[
 A_N(\varphi,\vartheta)
 = \e^{i\varphi}\frac{S+S^{-1}}2+\diag\!\left(\cos\!\left(\frac{2\pi j}{N}+\vartheta\right)\right)_{j\in\Z/N\Z},
\]
where \(S e_j=e_{j+1}\) is the periodic shift on \(\C^N\).  We derive a Chambers' formula and isolate the part \(Q_{N,\varphi}\) of the characteristic polynomial that only depends on $N$ and $\varphi$ but not $\vartheta$ or a change of boundary conditions of the shift operator.  We then show, for every \(N\), that the zeros of
\(Q_{N,\varphi}\) lie on the two perpendicular lines
\[
 \e^{i\varphi/2}\R\cup
 \e^{i(\varphi/2+\pi/2)}\R.
\]
For even \(N\), the same property holds for the
matrices $A_N(\varphi,\vartheta)$ with \(\vartheta\in2\pi\Z/N\), and we compute their limiting
eigenvalue measure explicitly.  For
\(\varphi\in[-\pi,\pi]\), the eigenvalue distribution approximates elliptic-integral densities with masses
\(1-|\varphi|/\pi\) and \(|\varphi|/\pi\), and maximal radii
\(2|\cos(\varphi/2)|\) and
\(2|\sin(\varphi/2)|\), respectively.
At \(\varphi=\pi/2\), the central polynomial \(Q_{N,\varphi}\) factors into positive quartic
factors.  This proves that the Scottish flag matrix, after Trefethen and Chapman, has its spectrum on the two
diagonal lines of the saltire, see Figure \ref{fig:saltire}.
\end{abstract}

\maketitle

\section{Introduction and main results}\label{sec:introduction}
\begin{figure}
    \includegraphics[width=5.5cm]{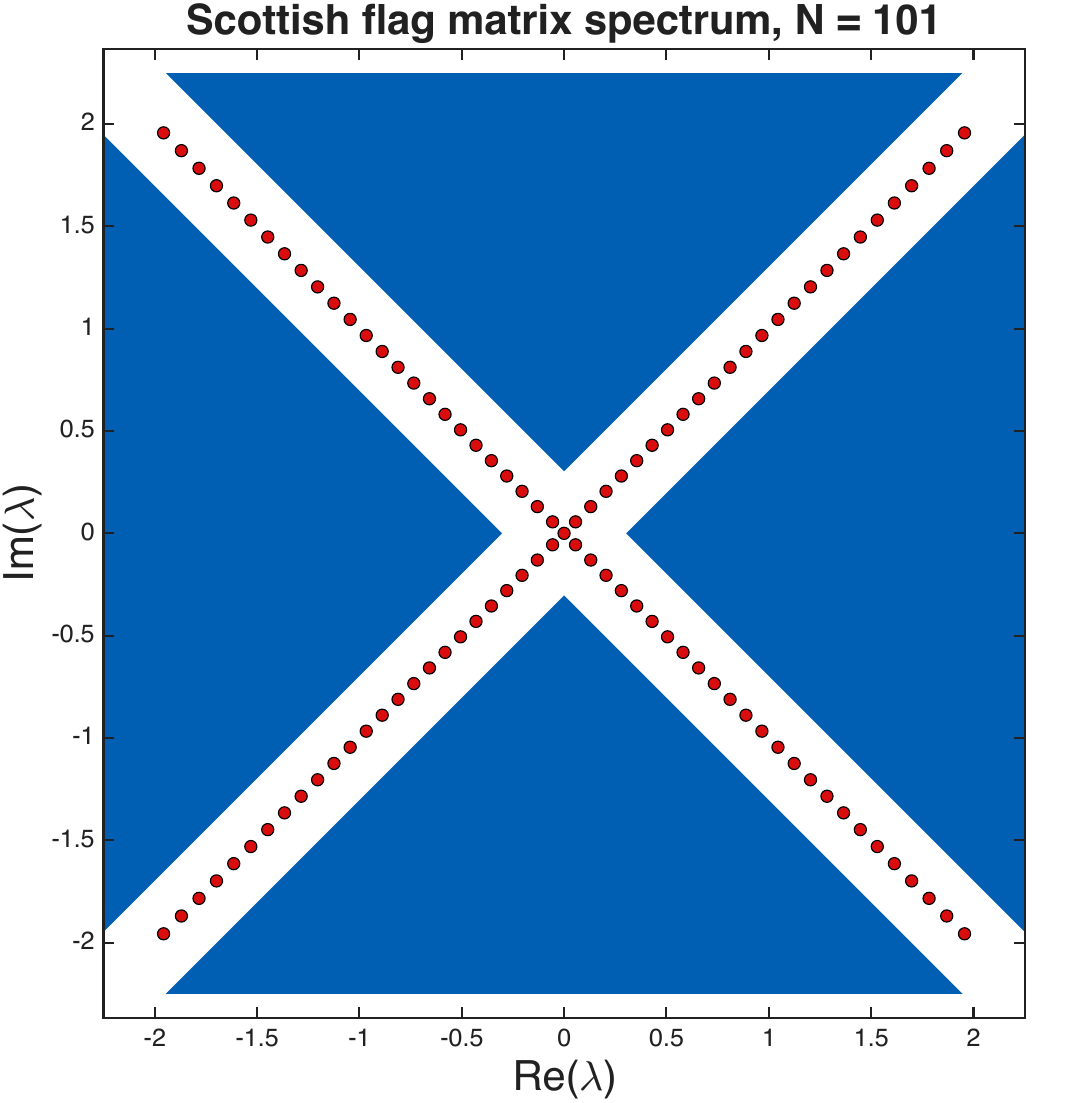}
    \caption{The eigenvalues (red dots) of \eqref{eq:BN} for $N=101$, shown against the flag of Scotland.}
    \label{fig:saltire}
\end{figure}
Let \(S e_j=e_{j+1}\) on \(\C^N\), with indices in \(\Z/N\Z\), i.e., periodic boundary conditions, and set
\[
 C_N:=\frac{S+S^{-1}}2,
 \text{ and }
 D_N(\vartheta):=
 \diag\!\left(\cos\!\left(\frac{2\pi j}{N}+\vartheta\right)\right)_{j\in\Z/N\Z}.
\]
For a coupling angle \(\varphi\in\R\), we study
\[
 A_N(\varphi,\vartheta):=\e^{i\varphi}C_N+D_N(\vartheta).
\]
Our motivation for this work is due to an observation by Trefethen and Chapman who
introduced the nonnormal matrix
\begin{equation}
\label{eq:BN}
 B_N=S-S^{-1}+2\diag(\sin(2\pi j/N))_{j\in\Z/N\Z}
\end{equation}
and used it to illustrate large two-dimensional pseudospectra
\cite{TrefethenChapman2004}, i.e., small perturbations leading to significant changes in the spectral structure.  
They numerically compute in \cite[Figure~1.1]{TrefethenChapman2004} the \(N=101\)
spectrum on the union of two orthogonal lines through the origin, inclined
at \(45^\circ\) to the coordinate axes, see Figure \ref{fig:saltire}. This spectral picture led to the name \emph{Scottish flag matrix}.  It had, however, only been observed numerically, and not been rigorously established.

The related periodic matrix \(A_N(\pi/2,0)\) also appeared in \cite{Oltman2023}; see also
\cite{BorthwickUribe,Vogel}.

To cover more general boundary conditions than just periodic ones, consider \(|\tau|=1\), write \(\tau=\e^{i\kappa}\) with \(\kappa\in\R\), and
let \(S_\tau\) denote the shift with boundary phase \(\tau\), the twist, i.e.,
\[
 S_\tau e_j=e_{j+1}\text{ for }0\le j<N-1,
 \qquad S_\tau e_{N-1}=\tau e_0.
\]
We can then define the slightly more general family
\[
 A_N^\tau(\varphi,\vartheta)
 :=\e^{i\varphi}\frac{S_\tau+S_\tau^{-1}}2+D_N(\vartheta).
\]
Thus \(A_N(\varphi,\vartheta)=A_N^1(\varphi,\vartheta)\).

Conjugating by \(G_N=\diag(1,i,i^2,\ldots,i^{N-1})\) then identifies the Scottish flag operator of Trefethen and Chapman with our family in this article. Indeed,
\begin{equation}
\label{eq:SF}
 -G_N^{-1}B_NG_N
 =2A_N^{i^N}(\pi/2,\pi/2),
\end{equation}
i.e., value \(\varphi=\pi/2\) is the one
that recovers the Scottish flag matrix \(B_N\).  A few cases are particularly simple:

If \(4\mid N\), then
\(\tau=1\), i.e., the shift is equipped with periodic boundary conditions, and translation gives a matrix unitarily equivalent to
\(2A_N(\pi/2,0)\).  If \(N\equiv2\pmod4\), then \(\tau=-1\), i.e., the shift is equipped with anti-periodic boundary conditions.
The determinant identity in \Cref{thm:intro-arbitrary-phase} shows that, for
every even \(N\), the matrix arising from \(B_N\) has the same
characteristic polynomial as \(A_N(\pi/2,0)\), after the scaling above. Thus, for even $N$, the spectral problem reduces to the study of periodic boundary conditions.
When \(N\equiv2\pmod4\), their Jordan blocks at zero are nevertheless different, even though their spectra coincide, i.e. it is not a unitary equivalence.

Our matrices $A_N^{\tau}$ are (in general) a non-normal analogue of rational almost Mathieu matrices; for background on the rational-frequency problem, see
\cite{Harper1955,Hofstadter1976,AubryAndre1980,AvronSimon1983,
Chambers1965,LamoureuxMingo2007,JitomirskayaKonstantinovKrasovsky2022}.  The corresponding irrational-frequency problem is substantially harder in the non-self-adjoint setting, where the simple quantitative perturbation bounds available in the self-adjoint case are absent.  We do not pursue it here.  
A separate work in preparation by the second author and Klopp studies a
much more general class of discrete Mathieu-type operators
\cite{OltmanKloppInPrep}.  Here we focus on exact finite-dimensional identities and on the explicit limiting measure for even periodic matrices, mostly when
\(\vartheta\in2\pi\Z/N\).  The first result gives conditions under which the spectrum is confined to two perpendicular lines.

\subsection{Spectra confined to perpendicular lines}

The coupling phase $\varphi$ and the potential phase $\vartheta$ have very
different roles.  For even \(N\), $\tau=1$, i.e., periodic boundary conditions, and \(\vartheta\in2\pi\Z/N\), we show that the spectrum lies on two perpendicular lines, as illustrated in \Cref{fig:spectrum-vs-phi}.  To parametrize these lines, let \(\alpha\in\R\), and set
\[
 \cX_\alpha:=\e^{i\alpha}\R\cup\e^{i(\alpha+\pi/2)}\R.
\]
\begin{figure}[t]
    \centering
    \includegraphics[width=\textwidth]{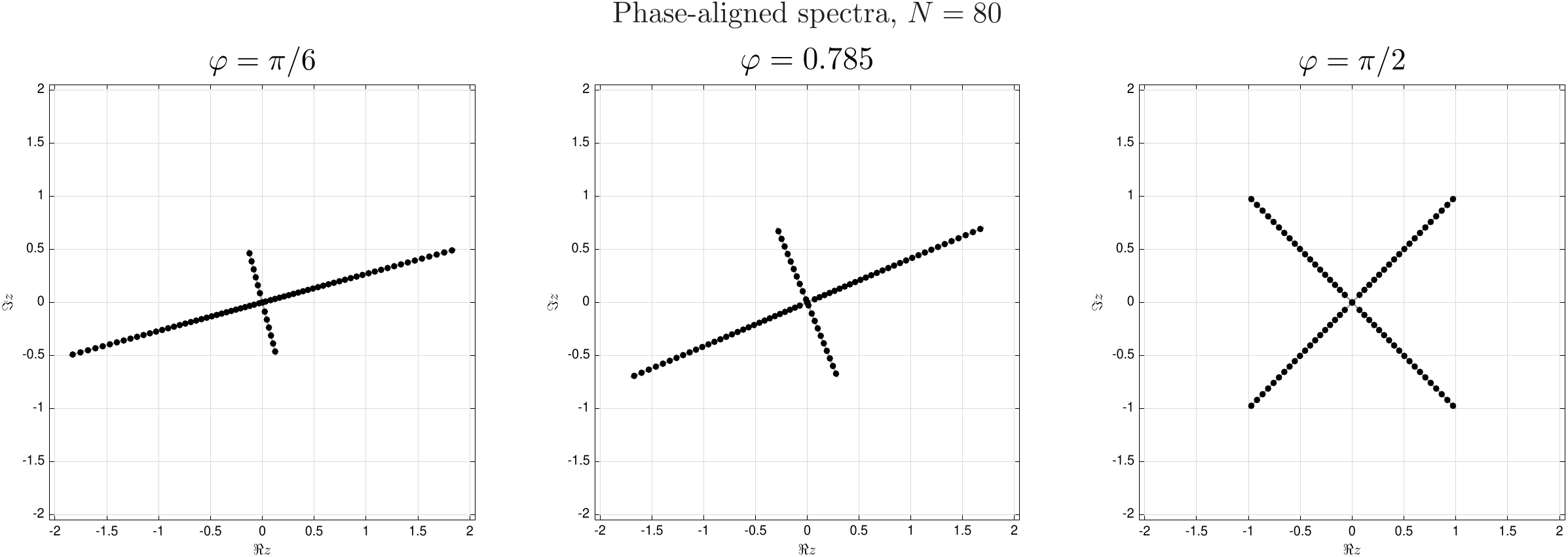}
    \caption{Spectra of \(A_{80}(\varphi,0)\) for
    \(\varphi=\pi/6,\pi/4,\pi/2\) (left to right), illustrating
    \Cref{thm:intro-unit-modulus}.  The solid and dashed guides are the two
    lines, truncated at the bounds
    \(2|\cos(\varphi/2)|\) and \(2|\sin(\varphi/2)|\), respectively.}
    \label{fig:spectrum-vs-phi}
\end{figure}
Our first main result is then:
\begin{theorem}
\label{thm:intro-unit-modulus}
Let \(N\ge4\) be even, \(\varphi\in\R\), and
\(\vartheta\in2\pi\Z/N\).  Then
\begin{equation}
 \label{eq:rotated-cross-intro}
 \spec A_N(\varphi,\vartheta)\subset \cX_{\varphi/2}.
\end{equation}
More precisely, if \(z\in\spec A_N(\varphi,\vartheta)\), then we have the upper bounds for the lines
\[
 z\in\e^{i\varphi/2}\R
 \quad\Longrightarrow\quad |z|\le2|\cos(\varphi/2)|\text{ and }
 z\in\e^{i(\varphi/2+\pi/2)}\R
 \quad\Longrightarrow\quad |z|\le2|\sin(\varphi/2)|.
\]
\end{theorem}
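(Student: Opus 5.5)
The plan is to rotate the problem so that the claim becomes a statement about a matrix whose spectrum is symmetric under both complex conjugation and negation. The expected difficulty is then a real-rootedness statement for the characteristic polynomial.

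\emph{Step 1: reduce to $\vartheta=0$.} Conjugating by a power of the periodic shift $S$ replaces $D_N(\vartheta)$ by $D_N(\vartheta\pm 2\pi/N)$ and leaves $C_N$ unchanged. So it suffices to treat $A:=A_N(\varphi,0)$.

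\emph{Step 2: rotate and record the symmetries.} Write $A=\e^{i\varphi/2}M$ with
\[
M:=\e^{i\varphi/2}C_N+\e^{-i\varphi/2}D_N(0)=\cos(\varphi/2)\,(C_N+D_N)+i\sin(\varphi/2)\,(C_N-D_N).
\]
The claim is then $\spec M\subset\R\cup i\R$, with real eigenvalues bounded by $2|\cos(\varphi/2)|$ and imaginary ones by $2|\sin(\varphi/2)|$. I would first collect two exact symmetries.
\begin{itemize}
\item \emph{Conjugation.} The unitary discrete Fourier transform $F$ satisfies $FC_NF^{-1}=D_N(0)$ and $FD_N(0)F^{-1}=C_N$. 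Hence $FMF^{-1}=\e^{i\varphi/2}D_N+\e^{-i\varphi/2}C_N=\overline{M}$, since $C_N,D_N$ are real. Therefore $\spec M=\overline{\spec M}$.
\item \emph{Negation.} Since $N$ is even, $U=S^{N/2}$ satisfies $UD_NU^{-1}=-D_N$ and $UC_NU^{-1}=C_N$. Combining this with the Fourier symmetry gives $M\sim -\overline{M}$, hence $\spec M=-\spec M$.
\end{itemize}
Consequently $\det(w-M)=P(w^2)$ up to a factor $w$ when needed, with $P$ a polynomial with real coefficients.

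\emph{Step 3: the main obstacle, real-rootedness of $P$.} The symmetries alone only show that non-real roots of $P$ come in conjugate pairs. I would attack this through the Chambers-type formula from the paper. Write $\det(z-A_N^\tau(\varphi,\vartheta))=Q_{N,\varphi}(z)+R(\vartheta,\tau)$, where $R$ is a constant built from $\cos(N\vartheta)$ and $\tau+\tau^{-1}$. For even $N$, $\vartheta\in 2\pi\Z/N$ and $\tau=1$, I would check that this constant is the one whose effect is to keep the roots of $Q_{N,\varphi}$ on $\cX_{\varphi/2}$. Concretely, I would then take the theorem's statement that the zeros of $Q_{N,\varphi}$ lie on $\e^{i\varphi/2}\R\cup\e^{i(\varphi/2+\pi/2)}\R$ and verify that adding this constant preserves it.

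If that route is not available, I would argue directly on the lines. For $z=\e^{i\varphi/2}t$ with $t$ real, or $z=\e^{i(\varphi/2+\pi/2)}t$, the transfer-matrix trace underlying Chambers' formula should be real. I would then count sign changes of $P(u)$ on $u\ge 0$ and on $u\le 0$ and show that they account for all $N/2$ roots. I expect this counting to be the hardest part. My first attempt would be to evaluate $P$ at the points $u$ corresponding to $\cos(2\pi k/N)$-type nodes and exhibit alternating signs.

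\emph{Step 4: the radius bounds.} These are easy numerical-range estimates. Let $w$ be an eigenvalue of $M$ with unit eigenvector $v$, so $w=\langle Mv,v\rangle$. The matrices $C_N\pm D_N$ are Hermitian with norm at most $2$, so
\[
\operatorname{Re}w=\cos(\varphi/2)\langle (C_N+D_N)v,v\rangle,\qquad \operatorname{Im}w=\sin(\varphi/2)\langle (C_N-D_N)v,v\rangle .
\]
If $w$ is real, then $|w|\le2|\cos(\varphi/2)|$; if $w$ is purely imaginary, then $|w|\le 2|\sin(\varphi/2)|$. Multiplying back by $\e^{i\varphi/2}$ gives the stated bounds for $z\in\spec A_N(\varphi,\vartheta)$.
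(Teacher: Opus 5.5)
\textbf{There is a genuine gap at Step 3.} Your Steps 1, 2 and 4 are correct. Conjugating by $UFU$ with $U=S^{N/2}$ does give $-\overline M$, and Step 4 is the paper's numerical-range argument in slightly cleaner form. But Step 3 is where the whole theorem lives, and it is not proved.

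\emph{Step 2 only restates the claim.} The symmetries give $\det(w-M)=P(w^2)$ with $P\in\R[u]$. The containment $\spec M\subset\R\cup i\R$ is then \emph{equivalent} to $P$ being real-rooted, so nothing has been gained yet.

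\emph{Your first route fails as stated.} Adding a nonzero constant to a polynomial does not preserve where its zeros lie. Concretely, \eqref{eq:283} and \eqref{eq:central-real-rooted-form} give, for even $N$ and $t=\e^{-i\varphi}z^2$,
\[
 \chi^1_{N,\varphi,0}(z)=\e^{iN\varphi/2}\bigl(R_{N,\varphi}(t)-2^{2-N}\cos(N\varphi/2)\bigr).
\]
So your $P$ is exactly $R_{N,\varphi}-2^{2-N}\cos(N\varphi/2)$. Even granting that $R_{N,\varphi}$ is real-rooted, you would still need a quantitative fact: the horizontal line at height $2^{2-N}\cos(N\varphi/2)$ must meet the graph of $R_{N,\varphi}$ in $N/2$ points counted with multiplicity. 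That is control of all its critical values, and nothing in the proposal supplies it. The constant genuinely matters, as \Cref{cor:periodic-phase-dichotomy} shows: at $\varphi=\pi/2$ and $N\equiv2\pmod4$, changing the constant term by $2^{1-N}(1-\cos N\vartheta)>0$ moves the entire spectrum off $\cX_{\pi/4}$.

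\emph{Your second route is only a hope.} You propose alternating signs of $P$ at nodes, but give no nodes and no sign computations. Moreover, the split of roots between the two lines depends on $\varphi$ (asymptotically $1-|\varphi|/\pi$ versus $|\varphi|/\pi$). So any nodes that produce the required interlacing would have to move with $\varphi$, and nodes like $\cos(2\pi k/N)$ cannot do it uniformly.

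\textbf{The missing idea is structural rather than analytic.}
\begin{enumerate}
\item In the quadratic-phase basis \eqref{eq:parity-chirp-basis}, $A_N(\varphi,0)$ becomes a zero-diagonal cyclic tridiagonal matrix. Its edge products are $\rho_k=\e^{i\varphi}c_k$ with $c_k=\frac12\bigl(\cos\varphi+\cos\frac{(2k+1)\pi}{N}\bigr)\in\R$.
\item The reflection $e_k\mapsto e_{-k}$ commutes with this matrix; this is where $N$ even, $\tau=1$ and $\vartheta\in2\pi\Z/N$ are used. It splits the matrix into two paths whose edge products are, up to positive factors, $c_0,\dots,c_{M-1}$ and a sublist of it.
\item These $c_k$ are strictly decreasing, so each path has at most one sign change.
\item For such a path, symmetrize and pass to whichever of $B^TB$, $BB^T$ in \Cref{prop:odd-even} avoids the sign-changing pair. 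This gives $\e^{i\varphi}$ times a real symmetric matrix. Every nonzero eigenvalue $z$ has $z^2$ in its spectrum, hence $z^2\in\e^{i\varphi}\R$.
\end{enumerate}
Without this mechanism, or some equivalent control of the critical values of $R_{N,\varphi}$, your proposal does not establish \eqref{eq:rotated-cross-intro}.
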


In the next subsection, we will see that the restriction on $\vartheta$ is necessary. Some restriction on $N$ is also necessary, as the following example shows:
When \(N=5\) and
\((\varphi,\vartheta)=(\pi/2,0)\),
\[
 \det\bigl(zI-A_5(\pi/2,0)\bigr)
 =z^5+\frac{5(1+\sqrt5)}{32}z-\frac{1+i}{16},
\]
but this polynomial has a root off the saltire $\cX_{\pi/4}.$

The first theorem concerns periodic boundary conditions, $\tau=1$. To treat the Scottish flag matrix, we must allow general boundary twists.

\subsection{Phase dependence, Chambers' formula, and the Scottish flag effect}

For arbitrary \(\vartheta\), the following determinant identity shows that the potential phase
only changes the constant term of the characteristic polynomial. Thus, the
eigenvalues may no longer be located on the distinguished lines, as shown
in \Cref{fig:theta-dependence}. We also isolate the role of the boundary condition $\tau$ of the shift to treat the Scottish flag matrix. 

For \(\tau=\e^{i\kappa}\), write $\chi_{N,\varphi,\vartheta}^{\tau}(z)
 :=\det\bigl(zI-A_N^\tau(\varphi,\vartheta)\bigr)$ for the characteristic polynomial. A decomposition separating the $(\tau,\vartheta)$-dependent constant term from a polynomial $Q_{N,\varphi}(z)$ is called a Chambers' formula in the theory of the AMO; see \eqref{eq:283}. The constant term also depends on $N$ and $\varphi$, while $Q_{N,\varphi}$ is independent of $\tau$ and $\vartheta$. We generalize this formula here to our setting in the following theorem:
\begin{theorem}
\label{thm:intro-arbitrary-phase}
Let \(N\ge3\) and \(\varphi,\vartheta\in\R\).  Set
\(\varepsilon_N:=N\bmod2\in\{0,1\},
 \delta_N:=\frac{1-\varepsilon_N}{2},
 \omega:=\e^{2\pi i/N},\)
and define $\psi_k(j):=\frac1{\sqrt N}
 \exp\!\left(\frac{\pi i j(j-\varepsilon_N)}N\right)\omega^{jk},$ for $ j,k\in\Z/N\Z.$
Let \(\widehat A_N(\varphi,\vartheta)\) be the matrix of
\(A_N(\varphi,\vartheta)\) in the orthonormal basis
\((\psi_k)_{k\in\Z/N\Z}\), and set $\rho_{k,N}(\varphi,\vartheta)
 :=\widehat A_N(\varphi,\vartheta)_{k+1,k}
   \widehat A_N(\varphi,\vartheta)_{k,k+1}.$
Then
\begin{equation}
 \label{eq:general-phase-product-intro}
 \rho_{k,N}(\varphi,\vartheta)
 =\frac{\e^{i\varphi}}2
 \left(\cos\varphi+
 \cos\!\left(\frac{2\pi(k+\delta_N)}N-\vartheta\right)\right).
\end{equation}
Moreover, there is a monic polynomial \(Q_{N,\varphi}(z)\), independent of
\(\vartheta\), such that for every
\(\tau=\e^{i\kappa}\)
\begin{equation}\label{eq:283}
 \chi_{N,\varphi,\vartheta}^{\tau}(z)
 =Q_{N,\varphi}(z)
 -2^{1-N}\cos(N\vartheta)
 -2^{1-N}\e^{iN\varphi}\cos\kappa.
\end{equation}
This allows us to compare different boundary twists $\tau$ and potential phases $\vartheta$:
\begin{equation}
 \label{eq:twisted-Chambers-intro}
 \chi_{N,\varphi,\vartheta}^{\tau}(z)
 =\chi_{N,\varphi,0}^{1}(z)
 +2^{1-N}\bigl(1-\cos(N\vartheta)\bigr)
 +2^{1-N}\e^{i N\varphi}\bigl(1-\cos\kappa\bigr).
\end{equation}
Thus all periodic matrices with \(\vartheta\in2\pi\Z/N\) are isospectral for every \(N\).
When \(\varphi=\pi/2\) and $N$ is even
\begin{equation}
 \label{eq:twist-cancellation-intro}
 \chi_{N,\pi/2,\pi/2}^{i^N}(z)
 =\chi_{N,\pi/2,0}^{1}(z),
\end{equation}
whereas for $N$ odd
\begin{equation}
 \label{eq:odd-central-Chambers-intro}
 \chi_{N,\pi/2,\pi/2}^{i^N}(z)=Q_{N,\pi/2}(z).
\end{equation}
\end{theorem}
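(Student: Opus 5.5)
We plan to prove the theorem by a chirped Fourier change of basis, which turns $A_N$ into a periodic tridiagonal matrix whose off-diagonal products are the $\rho_{k,N}$, and then to apply the standard determinant expansion for periodic Jacobi matrices. The expansion separates $\chi$ into a part built only from the diagonal entries and the products $\rho_{k,N}$, and two corner terms coming from the wrap-around products.

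\emph{Step 1 (the matrix $\widehat A_N$).} Write $D_N(\vartheta)=\tfrac12(\e^{i\vartheta}M+\e^{-i\vartheta}M^{-1})$ with $M=\diag(\omega^j)$. In the plain Fourier basis $\omega^{jk}/\sqrt N$, $C_N$ is diagonal with entries $\cos(2\pi k/N)$, and $M$ acts as a shift in $k$. The chirp factor $\exp(\pi i j(j-\varepsilon_N)/N)$ is chosen so that it is well defined on $\Z/N\Z$; this is why $\varepsilon_N$ appears. Conjugation by the chirp turns $S$ into $\e^{i\pi(2j+1-\varepsilon_N)/N}S$, i.e.\ multiplication by a phase linear in $j$, which is a combination of $M$ and a constant. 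Consequently, in the basis $(\psi_k)$:
\begin{itemize}
\item $\e^{i\varphi}C_N$ becomes a combination of the identity and the shifts $k\mapsto k\pm1$;
\item $D_N$ also contributes only to the diagonal and to the shifts $k\mapsto k\pm1$.
\end{itemize}
Hence $\widehat A_N$ is a periodic tridiagonal matrix. Its diagonal entries are $\tfrac12\e^{i\varphi}(\ldots)$ evaluated at the shifted frequency $2\pi(k+\delta_N)/N$, and each off-diagonal entry is a sum of two terms, one of the form $\tfrac12\e^{i\varphi}\e^{\pm i(\cdot)}$ and one of the form $\tfrac12\e^{\pm i\vartheta}$. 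Multiplying $\widehat A_{k+1,k}$ by $\widehat A_{k,k+1}$ and using $\e^{ia}+\e^{-ia}=2\cos a$, the cross terms should combine into \eqref{eq:general-phase-product-intro}. This is a direct computation.

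\emph{Step 2 (an easier route to \eqref{eq:283}).} Rather than expanding in the $\psi$-basis, we can use the standard expansion of a periodic tridiagonal determinant directly on $A_N^\tau$ in the original basis. There the off-diagonal entries are the constants $\e^{i\varphi}/2$, except that the wrap-around entries carry $\tau$ and $\bar\tau$. The expansion gives
\[
\chi^\tau(z)=\det(zI-J)-\det(zI-J')-\Bigl(\prod b\Bigr)\tau-\Bigl(\prod c\Bigr)\bar\tau
= P_{N,\varphi,\vartheta}(z)-2^{1-N}\e^{iN\varphi}\cos\kappa,
\]
where $J$ is the open chain, $J'$ the chain with two sites removed, and $P_{N,\varphi,\vartheta}$ does not depend on $\tau$. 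Writing the $\tau$-dependence in this form already isolates the $\kappa$-term.

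\emph{Step 3 (the $\vartheta$-dependence).} We need $P$ to depend on $\vartheta$ only through the constant term $-2^{1-N}\cos N\vartheta$. The natural symmetry is that translation $j\mapsto j+1$ conjugates $A_N$ into $A_N$ with $\vartheta\mapsto\vartheta+2\pi/N$, so $P$ is $2\pi/N$-periodic in $\vartheta$. Periodicity alone does not force the claimed form, so for the precise structure we apply the periodic-Jacobi expansion again, now to $\widehat A_N$. There the role of $\vartheta$ and $\varphi$ is reversed: the wrap-around products in the $\psi$-basis produce $\prod_k(\text{off-diagonal})$. Each off-diagonal entry is of the form $\tfrac12(\e^{i\varphi}\e^{i\theta_k}+\e^{-i\vartheta})$ (up to phases), and the full product should telescope to $2^{-N}(\pm\e^{iN\varphi}\ldots+\e^{\mp iN\vartheta})$. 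The diagonal terms and the products $\rho_{k,N}$ depend on $\vartheta$ only through the rotation $k\mapsto k-\vartheta N/2\pi$. This matches the classical proof of Chambers' formula: $\chi$ is a trigonometric polynomial in $\vartheta$ of degree at most $N$ whose only frequencies are multiples of $N$ by the periodicity above, so it equals $c_0(z)+c\cos N\vartheta+c'\sin N\vartheta$. The coefficient of $\e^{\pm iN\vartheta}$ comes only from the product of all diagonal entries of $D_N$, so it is independent of $z$. Its value $2^{1-N}\cos N\vartheta$ follows from $\prod_j\cos(2\pi j/N+\vartheta)=2^{1-N}\cos(N\vartheta+\ldots)$, with the appropriate sign. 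Combining with Step 2, we define $Q_{N,\varphi}$ as the remainder, which is monic.

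\emph{Step 4 (consequences).} Identity \eqref{eq:twisted-Chambers-intro} is the difference of two instances of \eqref{eq:283}. For $\varphi=\pi/2$, $\tau=i^N$, $\vartheta=\pi/2$ we have $\e^{iN\pi/2}=i^N$ and $\kappa=N\pi/2$:
\begin{itemize}
\item for even $N$, $\cos N\vartheta=\cos\kappa$ and $\e^{iN\varphi}=(-1)^{N/2}$, and the constant corrections cancel against those of $(\vartheta,\tau)=(0,1)$, which gives \eqref{eq:twist-cancellation-intro};
\item for odd $N$, both cosines vanish, which gives \eqref{eq:odd-central-Chambers-intro}.
\end{itemize}

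The main obstacle is Step 3: proving that the $\vartheta$-dependence is purely through $\cos N\vartheta$ with a $z$-independent coefficient, and fixing the exact sign and phase of that term. The trigonometric-degree argument requires careful bookkeeping of which permutations in the determinant expansion contribute the $\e^{\pm iN\vartheta}$ terms.
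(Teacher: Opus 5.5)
Your route is the paper's: the chirped basis gives the edge products, the two full-cycle terms of the cyclic determinant expansion in the original basis give the $\tau$-dependence, and for the $\vartheta$-dependence you use that $\chi$ is a Laurent polynomial in $\xi=\e^{i\vartheta}$ of degree at most $N$, invariant under $\xi\mapsto\omega\xi$, so only $\xi^{0}$ and $\xi^{\pm N}$ survive. The detour through the wrap-around products of $\widehat A_N$ is not needed. Two points should be corrected.

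\emph{The diagonal of $\widehat A_N$.} Step~1 misdescribes $\widehat A_N$: its diagonal vanishes. Shifting the chirp gives $S\psi_k=\omega^{1-\delta_N-k}\psi_{k-1}$ and $S^{-1}\psi_k=\omega^{k+\delta_N}\psi_{k+1}$. Multiplication by $\omega^{\pm j}$ sends $\psi_k$ to $\psi_{k\pm1}$. So both $C_N$ and $D_N(\vartheta)$ are pure nearest-neighbour shifts, and $\widehat A_N\psi_k=\alpha_{k,N}\psi_{k+1}+\beta_{k,N}\psi_{k-1}$, where $\alpha_{k,N}=\frac12(\e^{i\vartheta}+\e^{i\varphi}\omega^{k+\delta_N})$ and $\beta_{k,N}=\frac12(\e^{-i\vartheta}+\e^{i\varphi}\omega^{1-\delta_N-k})$. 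The product $\alpha_{k,N}\beta_{k+1,N}$ is then exactly \eqref{eq:general-phase-product-intro}. The mistake does not affect the product formula, since only off-diagonal entries enter. Still, the zero diagonal is the reason this basis is used, and the rest of the paper depends on it.

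\emph{The sign of the $\cos N\vartheta$ term.} This is the only computation in Step~3 with real content, and you leave it open. The identity you propose for it is wrong for even $N$: there
$\prod_{j}\cos(2\pi j/N+\vartheta)=2^{1-N}\bigl((-1)^{N/2}-\cos N\vartheta\bigr)$.
Reading your identity without a phase shift would put $(-1)^N2^{1-N}\cos N\vartheta$ into $\chi$, which has the wrong sign for even $N$. This is not cosmetic: for $N\equiv2\pmod4$, the cancellation in \eqref{eq:twist-cancellation-intro} requires the $\vartheta$-correction and the $\kappa$-correction to enter with the same sign.

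Compute the top coefficient directly, as the paper does. The power $\xi^{N}$ arises only from choosing $-\xi\omega^j/2$ in every diagonal factor, so its coefficient is
$\prod_{j=0}^{N-1}(-\omega^j/2)=(-1)^N2^{-N}\omega^{N(N-1)/2}=-2^{-N}$ for every $N$, because $\omega^{N(N-1)/2}=(-1)^{N-1}$. The $\xi^{-N}$ coefficient is the same. Hence there is no $\sin N\vartheta$ term, and the correction is $-2^{1-N}\cos N\vartheta$, independent of $z$ and $\tau$. With this, your Step~4 goes through as written.
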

The identities \eqref{eq:twist-cancellation-intro} and  \eqref{eq:odd-central-Chambers-intro} are the important identities for the Scottish flag matrix. The first one shows that the even $N$ case is fully resolved by Theorem \ref{thm:intro-unit-modulus}. The second one shows that to understand $N$ odd, we need to understand the roots of $Q_{N,\pi/2}$. 

This is the content of the next theorem:

\begin{theorem}
\label{thm:intro-central-cross}
Let \(N\ge3\), \(\varphi\in\R\), and put $\ell_N:=\left\lfloor\frac N2\right\rfloor,$ and $\varepsilon_N:=N \mod 2 \in\{0,1\}.$
For the polynomial \(Q_{N,\varphi}\) in
\eqref{eq:283}, there is a monic real-rooted polynomial
\(R_{N,\varphi}\in\R[t]\) of degree \(\ell_N\) such that
\begin{equation}
\label{eq:central-real-rooted-form}
 Q_{N,\varphi}(z)
 =\e^{i\ell_N\varphi}z^{\varepsilon_N}
 R_{N,\varphi}\!\left(\e^{-i\varphi}z^2\right).
\end{equation}
Consequently,
\begin{equation}
\label{eq:central-cross-intro}
 Q_{N,\varphi}(z)=0
 \quad\Longrightarrow\quad
 z\in\cX_{\varphi/2}.
\end{equation}
More generally, if \(\tau=\e^{i\kappa}\) and
\begin{equation}
\label{eq:central-fibre-condition}
 \cos(N\vartheta)+\e^{iN\varphi}\cos\kappa=0,
\end{equation}
then
\[
 \chi_{N,\varphi,\vartheta}^{\tau}=Q_{N,\varphi},
 \qquad
 \spec A_N^\tau(\varphi,\vartheta)\subset\cX_{\varphi/2}.
\]
Such a fibre exists for every \(N\) and \(\varphi\); one may always take $\vartheta=\frac{\pi}{2N},
 \tau=i.$
Every zero \(z\) of \(Q_{N,\varphi}\) satisfies
\[
 \begin{aligned}
 z\in\e^{i\varphi/2}\R
 \quad\Longrightarrow\quad
 |z|\le2\left|\cos\frac{\varphi}{2}\right| \text{ and }
 z\in\e^{i(\varphi/2+\pi/2)}\R
 \quad\Longrightarrow\quad
 |z|\le2\left|\sin\frac{\varphi}{2}\right|.
 \end{aligned}
\]
\end{theorem}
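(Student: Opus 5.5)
The plan is to work in the basis $(\psi_k)$ of \Cref{thm:intro-arbitrary-phase}. The first step is to check, using the chirp factor $\exp(\pi i j(j-\varepsilon_N)/N)$, that $\widehat A_N(\varphi,\vartheta)$ is a cyclic tridiagonal matrix with \emph{zero} diagonal. Both $\e^{i\varphi}C_N$ and $D_N(\vartheta)$ move the index $k$ by $\pm1$, so their diagonal contributions cancel. For such a matrix, expanding the determinant over permutations leaves only two kinds of terms:
\begin{itemize}
\item products of disjoint transpositions $(k,k+1)$ around the cycle $\Z/N\Z$, i.e.\ matchings of the cycle graph;
\item the two full $N$-cycles.
\end{itemize}
The full cycles give exactly the constant terms $-2^{1-N}\cos(N\vartheta)-2^{1-N}\e^{iN\varphi}\cos\kappa$ in \eqref{eq:283}. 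Hence
\[
 Q_{N,\varphi}(z)=\sum_{M}(-1)^{|M|}z^{N-2|M|}\prod_{\{k,k+1\}\in M}\rho_{k,N}(\varphi,\vartheta),
\]
the weighted matching polynomial of the $N$-cycle. By \eqref{eq:general-phase-product-intro} we have $\rho_{k,N}=\e^{i\varphi}w_k$ with real weights
\[
 w_k=\tfrac12\bigl(\cos\varphi+\cos\theta_k\bigr)=\cos^2(\varphi/2)-\sin^2(\theta_k/2),\qquad \theta_k=\tfrac{2\pi(k+\delta_N)}N-\vartheta .
\]

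The second step is the factorization \eqref{eq:central-real-rooted-form}. A matching with $m$ edges contributes $\e^{im\varphi}W_Mz^{N-2m}$. Substituting $z^2=\e^{i\varphi}t$ gives $z^{N-2m}=z^{\varepsilon_N}\e^{i(\ell_N-m)\varphi}t^{\ell_N-m}$. Therefore $Q_{N,\varphi}(z)=\e^{i\ell_N\varphi}z^{\varepsilon_N}R(t)$, where
\[
 R(t)=\sum_M(-1)^{|M|}W_Mt^{\ell_N-|M|}
\]
is the real weighted matching polynomial of the cycle. It is monic of degree $\ell_N$ and lies in $\R[t]$; this is $R_{N,\varphi}$.

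Once $R$ is shown to be real-rooted, the rest follows directly:
\begin{itemize}
\item A root $t>0$ gives $z\in\e^{i\varphi/2}\R$, and a root $t<0$ gives $z\in\e^{i(\varphi/2+\pi/2)}\R$; this is \eqref{eq:central-cross-intro}.
\item The fibre statement follows from \eqref{eq:283} once \eqref{eq:central-fibre-condition} kills the constant.
\item For $\vartheta=\pi/(2N)$ and $\tau=i$ we have $\cos(N\vartheta)=\cos(\pi/2)=0$ and $\cos\kappa=\cos(\pi/2)=0$, so this fibre works for every $N$ and $\varphi$.
\item The radius bounds become $0<t\le4\cos^2(\varphi/2)$ for positive roots and $-4\sin^2(\varphi/2)\le t<0$ for negative roots.
\end{itemize}

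The main obstacle is real-rootedness together with these bounds. Heilmann--Lieb does not apply, because the $w_k$ change sign: $w_k>0$ on an arc of $k$, namely where $\sin^2(\theta_k/2)<\cos^2(\varphi/2)$, and $w_k<0$ on the complementary arc. Since $Q_{N,\varphi}$ is independent of $\vartheta$, I would first choose $\vartheta$ so that no $w_k$ vanishes and the two sign blocks are contiguous arcs. I would then cut the cycle at one edge, using the standard identity
\[
 \text{cycle matching polynomial}=\text{path polynomial}-w\cdot(\text{path with two endpoints deleted}),
\]
and analyze the path polynomials through the three-term recurrence $p_{j+1}(t)=t\,p_j(t)-w_jp_{j-1}(t)$, i.e.\ a Sturm-sequence argument.

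The idea behind the sign-change count is the following. On the positive arc the polynomials behave like characteristic polynomials of real Jacobi matrices, whose zeros lie in $[0,4\max_k w_k]\subset[0,4\cos^2(\varphi/2)]$. On the negative arc, replacing $t\mapsto -t$ and $w\mapsto -w$ gives the same structure up to sign, with zeros controlled by $4\max_k(-w_k)\le4\sin^2(\varphi/2)$. I would try to exhibit explicit test points $t_1<\dots$, at zeros of sub-path polynomials, where $R$ alternates in sign, giving $\ell_N$ sign changes in $[-4\sin^2(\varphi/2),\,4\cos^2(\varphi/2)]$. This yields all $\ell_N$ roots as real and inside the claimed intervals.

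If direct interlacing across the interface between the two sign blocks proves messy, the fallback is a deformation argument:
\begin{itemize}
\item At $\varphi=0$ all weights are $\ge0$, so Heilmann--Lieb gives real-rootedness, and the bounds follow from Gershgorin applied to a real symmetric Jacobi realization.
\item Varying $\varphi$, roots of a real polynomial can leave $\R$ only through a double root.
\item To exclude double roots, including at $t=0$, I would use that the sign pattern of $R$ at the interval endpoints is preserved throughout the deformation.
\end{itemize}
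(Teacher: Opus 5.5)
Your matching-polynomial reduction is fine. Work with $\tau=1$: for $\tau\ne1$ the twisted shift is not cyclic tridiagonal in the $\psi_k$ basis, but $Q_{N,\varphi}$ does not depend on $\tau$. The reduction gives $R_{N,\varphi}\in\R[t]$, monic of degree $\ell_N$, more directly than the paper, which reads off real coefficients from real-rootedness. The fibre identity $\chi^\tau_{N,\varphi,\vartheta}=Q_{N,\varphi}$ and the choice $\vartheta=\pi/(2N)$, $\tau=i$ then follow from \eqref{eq:283}.

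\textbf{The gap.} The actual content of the theorem is not proved: real-rootedness of $R_{N,\varphi}$, i.e.\ \eqref{eq:central-cross-intro}, and the two radius bounds. The test-point interlacing is only announced. The deformation fallback does not work: the sign of $R$ at the endpoints of $[-4\sin^2\frac{\varphi}{2},4\cos^2\frac{\varphi}{2}]$ only fixes the parity of the number of roots inside. It cannot exclude two real roots colliding in the interior and leaving $\R$. Also, the recurrence $p_{j+1}=t\,p_j-w_jp_{j-1}$ you quote is the path recurrence in $x=\e^{-i\varphi/2}z$, not in $t=x^2$. With sign-changing $w_j$ its roots are not all real, so a Sturm count for that sequence does not count the roots you need. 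Finally, choosing $\vartheta$ with no vanishing $w_k$ is exactly what produces the correction term $-w\cdot(\text{shorter path})$ in your cut identity. Controlling that term across the sign interface is the part you leave open.

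\textbf{The missing idea.} Make the opposite choice of $\vartheta$, and replace interlacing by the bipartite structure.
\begin{itemize}
\item Since $Q_{N,\varphi}$ is independent of $\vartheta$, take $\vartheta_*=\varphi+2\pi\delta_N/N-\pi$. Then $\rho_{0,N}=0$ and $\rho_{j,N}=\e^{i\varphi}\sin\frac{\pi j}{N}\sin(\frac{\pi j}{N}-\varphi)$ for $1\le j<N$. The cycle opens into a path with no correction term.
\item The arguments $\frac{\pi j}{N}-\varphi$ span an interval of length $<\pi$, so the real weights change sign at most once along the path. Zero products just split it into components.
\item Symmetrize with $w_j^2=\rho_j$ and order odd indices before even ones. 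This gives $\begin{pmatrix}0&B\\B^T&0\end{pmatrix}$, so $\det(zI-T)=z^{n-2s}\det(z^2I-B^TB)$, and $B^TB$, $BB^T$ have the same nonzero spectrum.
\item The diagonal entries of both Gram matrices are sums of $\rho_j\in\e^{i\varphi}\R$. Each product $w_jw_{j+1}$ occurs off the diagonal of exactly one of them. So one of the two avoids the single product straddling the sign change and equals $\e^{i\varphi}$ times a real symmetric matrix. This forces $\e^{-i\varphi}z^2\in\R$.
\end{itemize}

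\textbf{The bounds.} These need no Jacobi estimate either. On the fibre $\vartheta=\pi/(2N)$, $\tau=i$, $Q_{N,\varphi}$ is the characteristic polynomial of $\e^{i\varphi}\frac{S_i+S_i^{-1}}2+D_N(\frac{\pi}{2N})$. A unit eigenvector gives $z=a+\e^{i\varphi}b$ with $a,b\in[-1,1]$. Then $\e^{-i\varphi/2}z=\cos\frac{\varphi}2(a+b)+i\sin\frac{\varphi}2(b-a)$. When the relevant trigonometric factor is nonzero, this forces $a=b$ on the first line and $a=-b$ on the second. That gives $|z|\le2|\cos\frac{\varphi}2|$ and $|z|\le2|\sin\frac{\varphi}2|$ respectively. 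When the factor vanishes, the bound follows directly from $|a\pm b|\le2$.
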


We thus conclude for the Scottish flag matrix: 

\begin{corollary}
    For $N \ge 3$, we find $\spec(B_N) \subset \cX_{\pi/4}.$
\end{corollary}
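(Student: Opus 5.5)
The corollary follows by combining the conjugation identity \eqref{eq:SF} with \Cref{thm:intro-arbitrary-phase}, and then splitting according to the parity of $N$. Since similarity preserves the spectrum, \eqref{eq:SF} gives
\[
 \spec(B_N)=-2\,\spec A_N^{i^N}(\pi/2,\pi/2).
\]
The set $\cX_{\pi/4}$ is a union of two lines through the origin, so it is invariant under multiplication by any nonzero real number, in particular by $-2$. It therefore suffices to show that every zero of $\chi^{i^N}_{N,\pi/2,\pi/2}$ lies in $\cX_{\pi/4}$.

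\emph{Even $N$.} By \eqref{eq:twist-cancellation-intro}, $\chi^{i^N}_{N,\pi/2,\pi/2}=\chi^1_{N,\pi/2,0}$, which is the characteristic polynomial of the periodic matrix $A_N(\pi/2,0)$. For $N\ge4$ even, \Cref{thm:intro-unit-modulus} with $\varphi=\pi/2$ and $\vartheta=0\in2\pi\Z/N$ puts this spectrum inside $\cX_{\varphi/2}=\cX_{\pi/4}$.

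\emph{Odd $N\ge3$.} By \eqref{eq:odd-central-Chambers-intro}, $\chi^{i^N}_{N,\pi/2,\pi/2}=Q_{N,\pi/2}$. Its zeros lie in $\cX_{\pi/4}$ by \eqref{eq:central-cross-intro} of \Cref{thm:intro-central-cross} with $\varphi=\pi/2$. As a consistency check, this case is also covered by the fibre condition \eqref{eq:central-fibre-condition}: with $\vartheta=\pi/2$ and $\kappa=N\pi/2$ one has $\cos(N\pi/2)=0$ for $N$ odd, so both terms vanish.

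There is essentially no obstacle; the only care needed is bookkeeping. First, $N$ starts at $3$, and the even case begins at $N=4$, exactly where \Cref{thm:intro-unit-modulus} applies. Second, one should confirm that the twist in \eqref{eq:SF} is $\tau=i^N$, corresponding to $\kappa=N\pi/2$, so that \eqref{eq:twist-cancellation-intro} and \eqref{eq:odd-central-Chambers-intro} are invoked with the right parameters. Third, the rescaling by $-2$ preserves $\cX_{\pi/4}$. The real content lies in \Cref{thm:intro-unit-modulus} and \Cref{thm:intro-central-cross}, which we assume.
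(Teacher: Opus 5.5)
Your proof is correct and is essentially the paper's argument: the conjugation \eqref{eq:SF} reduces the problem to $A_N^{i^N}(\pi/2,\pi/2)$; the even case then follows from \eqref{eq:twist-cancellation-intro} together with \Cref{thm:intro-unit-modulus} at $\varphi=\pi/2$; and the odd case follows from \eqref{eq:odd-central-Chambers-intro} together with the cross containment of the zeros of $Q_{N,\pi/2}$. The paper packages these same steps in \Cref{thm:original-scottish}, citing \Cref{thm:intro-scottish} for even $N$ and \Cref{thm:odd-scottish} (via the factorization in \Cref{prop:central-scottish-factorization}) for odd $N$; these are the $\varphi=\pi/2$ specializations of the results you invoke.
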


For \(\varphi=\pi/2\) and \(N\equiv2\pmod4\), we have from \eqref{eq:central-fibre-condition} 
\[
 \vartheta\in2\pi\Z/N
 \quad\Longrightarrow\quad
 \spec A_N(\pi/2,\vartheta)\subset\cX_{\pi/4},
\]
whereas from the factorization of the characteristic polynomial at \(\vartheta=0\) in 
\Cref{thm:intro-scottish} and \eqref{eq:twisted-Chambers-intro}
\[
 \vartheta\notin2\pi\Z/N
 \quad\Longrightarrow\quad
 \spec A_N(\pi/2,\vartheta)\cap\cX_{\pi/4}=\varnothing
 \text{ for }N\equiv2\!\!\pmod4.
\]
The exclusion result has a quantitative companion.  Put
\[
 d_N(\vartheta):=\operatorname{dist}\!\left(
 \vartheta,\frac{2\pi}{N}\mathbb Z\right)\in[0,\pi/N]
\text{ and }
 \gamma_N(\vartheta):=
 \frac12\sqrt{\sin d_N(\vartheta)\,
 \sin\!\left(\frac{2\pi}{N}-d_N(\vartheta)\right)}.
\]
Then every \(z\in\spec A_N(\pi/2,\vartheta)\) satisfies
\[
 |\Re z^2|\le\gamma_N(\vartheta)
 \le\frac12\sin\frac{\pi}{N} \text{ and therefore }
 \operatorname{dist}(z,\cX_{\pi/4})
 \le \sqrt{\frac{\gamma_N(\vartheta)}2}
 \le\frac12\sqrt{\sin\frac{\pi}{N}}.
\]
Thus the whole periodic spectrum is \(\mathcal  O(N^{-1/2})\)-close to the two diagonal lines,
uniformly in the phase. 
See \Cref{cor:periodic-phase-dichotomy}.

\begin{figure}[t]
    \centering
    \includegraphics[width=0.98\textwidth]{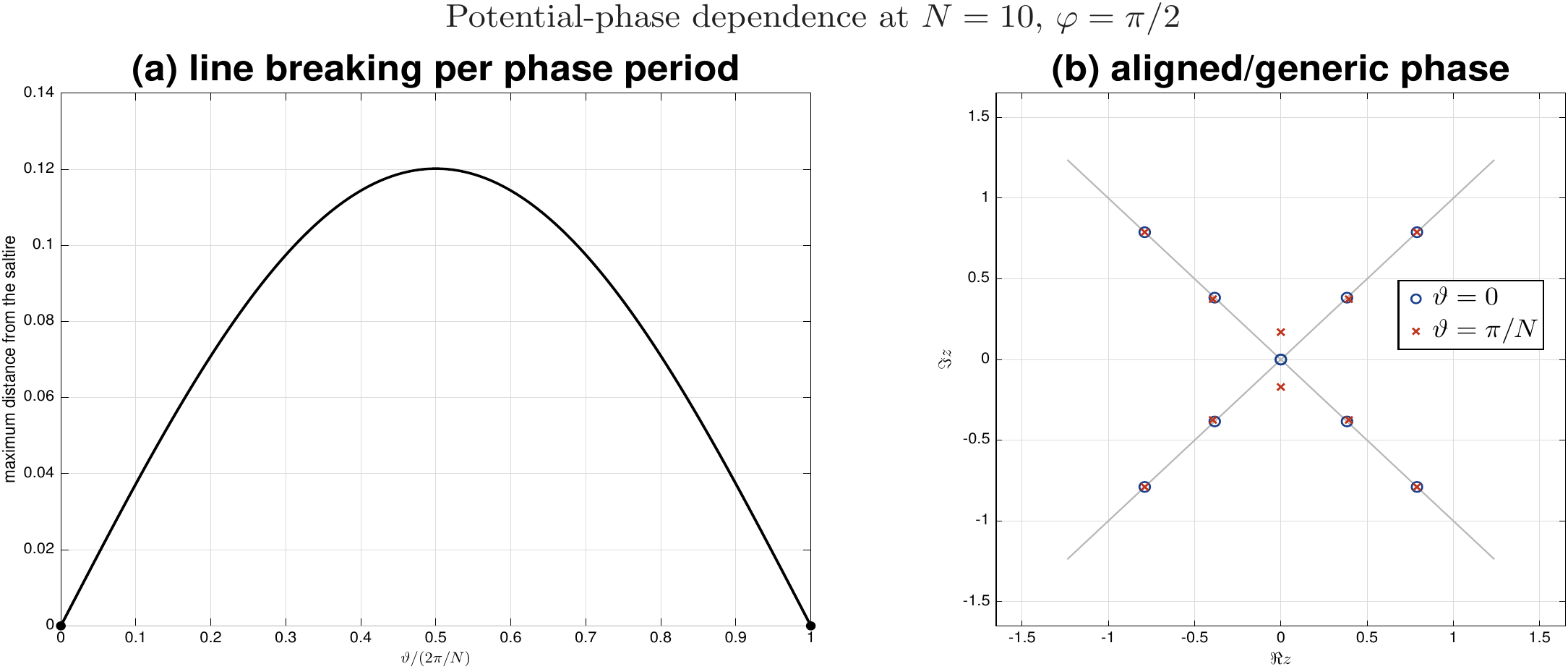}
    \caption{Potential-phase dependence at \(N=10\) and
    \(\varphi=\pi/2\), illustrating \Cref{thm:intro-arbitrary-phase}.
    Let \(\mathcal S=\cX_{\pi/4}\) and define
    \(d_{\mathcal S}(z):=\min\{|\Im(\e^{-\pi i/4}z)|,
    |\Im(\e^{-3\pi i/4}z)|\}\).  The left figure (a) plots
    \(\max_{z\in\spec A_N(\pi/2,\vartheta)}d_{\mathcal S}(z)\)
    at 101 equally spaced values of \(\vartheta\) over one phase period
    \([0,2\pi/N]\).  The right figure (b) compares \(\vartheta=0\) with \(\vartheta=\pi/N\).}
    \label{fig:theta-dependence}
\end{figure}

\subsection{The limiting distribution}
Our last main result gives the distribution of eigenvalues on the two lines of $\cX_{\alpha}.$ 

For this, we use the complete elliptic integral of the first kind in the modulus
convention
\begin{equation*}
 \EllK(k):=\int_0^{\pi/2}\frac{d\alpha}
 {\sqrt{1-k^2\sin^2\alpha}},
 \qquad 0\le k<1 \text{ with }\EllK(1)=+\infty.
\end{equation*}
 For \(\alpha\in\R\),
let \(R_\alpha\colon\R\to\C\) be the rotation $R_\alpha(t):=\e^{i\alpha}t.$

For a measure \(\nu\) and a map \(T\), write \(T\push\nu\) for the measure defined by
\[
 (T\push\nu)(E)=\nu(T^{-1}(E)).
\]
We also write \(\delta_z\) for the measure concentrated at the single point \(z\).  For
\(\varphi\in[-\pi,\pi]\), define
\[
 a_{\varphi,+}:=\cos\frac{|\varphi|}{2} \text{ and }
 a_{\varphi,-}:=\sin\frac{|\varphi|}{2}.
\]
\begin{theorem}
\label{thm:intro-limit}
Fix \(\varphi\in[-\pi,\pi]\).  For even \(N\), define the probability
measure
\[
 \mu_{N,\varphi}:=\frac1N
 \sum_{z\in\spec A_N(\varphi,0)}\delta_z,
\]
where eigenvalues are counted with algebraic multiplicity.  Define
\begin{equation}
\label{eq:general-line-densities}
 g_{\varphi,\pm}(t):=
 \begin{cases}
 \displaystyle
 \frac{1}{\pi^2}
 \EllK\!\left(
 \sqrt{a_{\varphi,\pm}^2-\frac{t^2}{4}}
 \right),
 & |t|<2a_{\varphi,\pm},\\[2mm]
 0,
 & |t|\ge 2a_{\varphi,\pm}.
 \end{cases}
\end{equation}
Then, as \(N\to\infty\) through even integers, weakly as probability measures on \(\C\),
\begin{equation}
\label{eq:general-explicit-limit}
 \mu_{N,\varphi}\Longrightarrow
 (R_{\varphi/2})\push\bigl(g_{\varphi,+}(t)\,dt\bigr)
 +(R_{\varphi/2+\pi/2})\push\bigl(g_{\varphi,-}(t)\,dt\bigr).
\end{equation}
The two measures on the two lines have masses
\begin{equation}
\label{eq:line-masses-intro}
 \int_\R g_{\varphi,+}(t)\,dt=1-\frac{|\varphi|}{\pi},
 \qquad
 \int_\R g_{\varphi,-}(t)\,dt=\frac{|\varphi|}{\pi}.
\end{equation}
If \(a_{\varphi,\pm}>0\), the support of the corresponding measure on that line is
\([-2a_{\varphi,\pm},2a_{\varphi,\pm}]\).  When
\(a_{\varphi,\pm}=0\), that measure on that line vanishes and its support is empty.
Let \(\mu_\varphi\) denote the measure on the right-hand side of
\eqref{eq:general-explicit-limit}.  For every polynomial \(P(z,\bar z)\),
\begin{equation}
\label{eq:general-polynomial-rate}
 \frac1N\sum_{z\in\spec A_N(\varphi,0)}P(z,\bar z)
 =\int_\C P(z,\bar z)\,d\mu_\varphi(z)+\mathcal O_{P,\varphi}(N^{-1}).
\end{equation}
In particular, for the periodic matrix at \(\varphi=\pi/2\), one has
\[
 a_{\pi/2,+}=a_{\pi/2,-}=\frac1{\sqrt2},
 \qquad
 g_{\pi/2,+}=g_{\pi/2,-}.
\]
\end{theorem}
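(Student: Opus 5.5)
The plan is to prove weak convergence through convergence of moments, using \Cref{thm:intro-unit-modulus}: all eigenvalues lie on $\cX_{\varphi/2}$ inside the disc of radius $2$. The measures $\mu_{N,\varphi}$ are therefore supported in a fixed compact set. On a compact set, weak convergence follows once $\int P\,d\mu_{N,\varphi}\to\int P\,d\mu_\varphi$ for every polynomial $P(z,\bar z)$, by Stone--Weierstrass. So the rate statement \eqref{eq:general-polynomial-rate} implies \eqref{eq:general-explicit-limit}, and I would aim directly for it.

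\emph{Reduction to holomorphic moments.} On $\cX_{\varphi/2}$ we have $\bar z=\e^{-i\varphi}z$ on the first line and $\bar z=-\e^{-i\varphi}z$ on the second. The two lines are separated by the real-valued sign function $s(z):=\e^{-i\varphi}z^2/|z|^2=\pm1$, and $\bar z=s(z)\e^{-i\varphi}z$. Every monomial $z^a\bar z^b$ therefore equals $\e^{-ib\varphi}z^{a+b}$ times $s(z)^b$. Since $s^2=1$, it suffices to control the two families
\[
 \frac1N\tr A^m=\frac1N\sum_z z^m,
 \qquad
 \frac1N\sum_z s(z)z^m .
\]
For $m$ even, the second family is determined by $\bar{z}^{m}$-type moments of the form $\frac1N\sum_z z^{m-2}\cdot |z|^2\cdot(\dots)$. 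This is cleaner to handle through the variable $w=\e^{-i\varphi}z^2$. I plan to use the factorization \eqref{eq:central-real-rooted-form} together with \eqref{eq:twisted-Chambers-intro}: for $\vartheta=0$, $\tau=1$, the characteristic polynomial equals $Q_{N,\varphi}-2^{1-N}(1+\e^{iN\varphi})$, and \Cref{thm:intro-unit-modulus} says its roots $z$ have real $w$ in $[-4\sin^2(\varphi/2),4\cos^2(\varphi/2)]$. Hence $\mu_{N,\varphi}$ is determined by the empirical measure $\nu_N$ of the real numbers $w$, a measure on a compact interval. Moments of $\nu_N$ are $\frac1N\sum_z(\e^{-i\varphi}z^2)^m=\frac{\e^{-im\varphi}}N\tr A^{2m}$. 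Odd moments vanish by the $z\mapsto-z$ symmetry, which holds since $N$ is even. It thus suffices to compute $\lim\frac1N\tr A_N(\varphi,0)^{2m}$ with an $\mathcal O(1/N)$ error. The measure $\mu_\varphi$ is the pullback of the limit of $\nu_N$ under $w\mapsto z$, with mass split symmetrically in $\pm z$.

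\emph{Computing the trace moments.} I expand $\tr(\e^{i\varphi}C_N+D_N)^{2m}$ as a sum over closed lattice walks of length $2m$ on $\Z/N\Z$. For $N>2m$ no walk wraps around. Each step is either a hop weighted $\e^{i\varphi}/2$ or a stay weighted $\cos(2\pi j/N)$. Averaging over the starting point $j$ is a Riemann sum: $\frac1N\tr A^{2m}=\int_0^{1}\langle\delta_0,(\e^{i\varphi}\Delta_{1/2}+\cos 2\pi x)^{2m}\delta_0\rangle\,dx+\mathcal O(1/N)$, since the potential is frozen along a walk up to an $\mathcal O(m/N)$ Lipschitz error. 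Fourier transforming the hopping part gives the limit moment
\[
 M_{2m}=\int_0^1\!\!\int_0^1\bigl(\e^{i\varphi}\cos2\pi\xi+\cos2\pi x\bigr)^{2m}\,d\xi\,dx .
\]
The limit measure on $\C$ should be identified as the distribution of the symbol $\e^{i\varphi}\cos a+\cos b$ only on the level of holomorphic moments. The real content is that the $w$-moments $\e^{-im\varphi}M_{2m}$ are moments of a real measure. Writing $\e^{-i\varphi/2}(\e^{i\varphi}\cos a+\cos b)=\e^{i\varphi/2}\cos a+\e^{-i\varphi/2}\cos b$ and squaring, I would substitute $u=(a+b)/2$, $v=(a-b)/2$. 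This gives $\cos a\cos b$-type products. After the standard change of variables, this should produce $w=4(\cos^2\tfrac\varphi2\cos^2u-\sin^2\tfrac\varphi2\sin^2v)$-like real expressions, whose distribution under uniform $(u,v)$ is a real measure with the same moments. Determinacy on a compact interval then identifies $\lim\nu_N$.

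\emph{Explicit densities and masses.} The last step, which I expect to be the main computational obstacle, is computing the pushforward density of that real expression. I would do this by a convolution of arcsine laws, which yields the complete elliptic integral $\EllK$ as in \eqref{eq:general-line-densities}. The mass of the positive part ($w>0$, the line $\e^{i\varphi/2}\R$) is the measure of $\{|\cos\tfrac\varphi2\cos u|>|\sin\tfrac\varphi2\sin v|\}$. I would evaluate this via the area of the region, hoping it reduces to $1-|\varphi|/\pi$. The supports $[-2a_{\varphi,\pm},2a_{\varphi,\pm}]$ come from the extremes of the expression. The case $\varphi=\pi/2$ is then immediate from $a_{\pi/2,\pm}=1/\sqrt2$.
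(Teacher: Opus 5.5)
Your architecture is sound and genuinely different from the paper's. By \Cref{thm:intro-unit-modulus} and the $z\mapsto -z$ symmetry, $\mu_{N,\varphi}$ is the symmetric lift of the real measure $\nu_N$ of $w=\e^{-i\varphi}z^2$ on a fixed interval. Its moments are $\e^{-im\varphi}N^{-1}\tr A_N(\varphi,0)^{2m}=\e^{-im\varphi}M_{2m}+\mathcal O(N^{-1})$ by your walk expansion. Compactness and determinacy then already give $\nu_N\Rightarrow\nu$ for a unique real $\nu$ with these moments, so the existence of a real measure is free.

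The real content is identifying $\nu$ with the image of $\mu_\varphi$ under $z\mapsto\e^{-i\varphi}z^2$, and the step you propose for this does not work. No change of variables makes the complex symbol real. The guessed variable also has the wrong law:
\begin{itemize}
\item $4(\cos^2\frac\varphi2\cos^2u-\sin^2\frac\varphi2\sin^2v)$ has mean $2\cos\varphi$, not $\e^{-i\varphi}M_2=\cos\varphi$;
\item even after halving, its second moment at $\varphi=0$ is $3/2$, not $9/4$;
\item the set $\{|\cos\frac\varphi2\cos u|>|\sin\frac\varphi2\sin v|\}$ has measure $1-2\varphi/\pi^2+o(\varphi)$, not $1-\varphi/\pi$.
\end{itemize}
The missing idea is the continuum version of the paper's edge products $\rho_k$. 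With $a=u+v$, $b=u-v$ and $v$ fixed,
\[
 \e^{i\varphi}\cos a+\cos b=\tfrac12\bigl(\alpha_v\e^{iu}+\beta_v\e^{-iu}\bigr),
 \qquad
 \alpha_v\beta_v=\bigl(\e^{i\varphi}\e^{iv}+\e^{-iv}\bigr)\bigl(\e^{i\varphi}\e^{-iv}+\e^{iv}\bigr)=2\e^{i\varphi}(\cos\varphi+\cos 2v).
\]
The $u$-average of the $2m$-th power is therefore the middle binomial term $\binom{2m}{m}4^{-m}(\alpha_v\beta_v)^m$, which depends only on the product. This yields $\e^{-im\varphi}M_{2m}=\mathbb E\bigl[(2(\cos\varphi+\cos y)\cos^2\eta)^m\bigr]$ with $y,\eta$ independent and uniform. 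That is exactly the $w$-moment of the measure in \Cref{lem:limit-integral-form}. The densities then come from a scale mixture of arcsine laws, as in \Cref{lem:elliptic-line-densities}, and the mass of the first line is $\Pr(\cos y>-\cos\varphi)=1-|\varphi|/\pi$. With this repair, your route proves \eqref{eq:general-explicit-limit}, the masses and the supports more cheaply than the paper's folding, odd--even and Jacobi-symbol machinery.

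The second gap is the rate \eqref{eq:general-polynomial-rate}, which your reduction to $\tr A_N^{2m}$ does not deliver. On $\cX_{\varphi/2}$, for $a+b=2m$, one has $z^a\bar z^b=\e^{i(a-b)\varphi/2}\operatorname{sgn}(w)^b\,w^m$. So for odd $b$ (already for $P=|z|^2=|w|$) you need $\int\operatorname{sgn}(w)w^m\,d\nu_N$ to within $\mathcal O(N^{-1})$. This is not a polynomial moment of $\nu_N$, hence not a trace; traces of polynomials in $A_N$ only see holomorphic functions of the eigenvalues.

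Weak convergence gives the limit of these quantities but no rate. Polynomial approximation of $\operatorname{sgn}(w)w^m$ does not rescue it: your walk estimate controls a polynomial only through the size of its monomial coefficients, and these are exponentially large in the degree for good approximants. This gives at best a logarithmic rate.

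The paper obtains the rate because it realizes the eigenvalues on each line separately, up to bounded rank, as spectra of positive semidefinite Jacobi matrices $K_N^\pm$. It then applies \Cref{lem:rank-BV} to $(x_+)^m$. Your proposal would need an analogous separate handle on the two lines.
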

The extension to all dimensions is given in
\Cref{cor:full-scottish-limit}.
\begin{figure}[t]
    \centering
    \includegraphics[width=\textwidth]{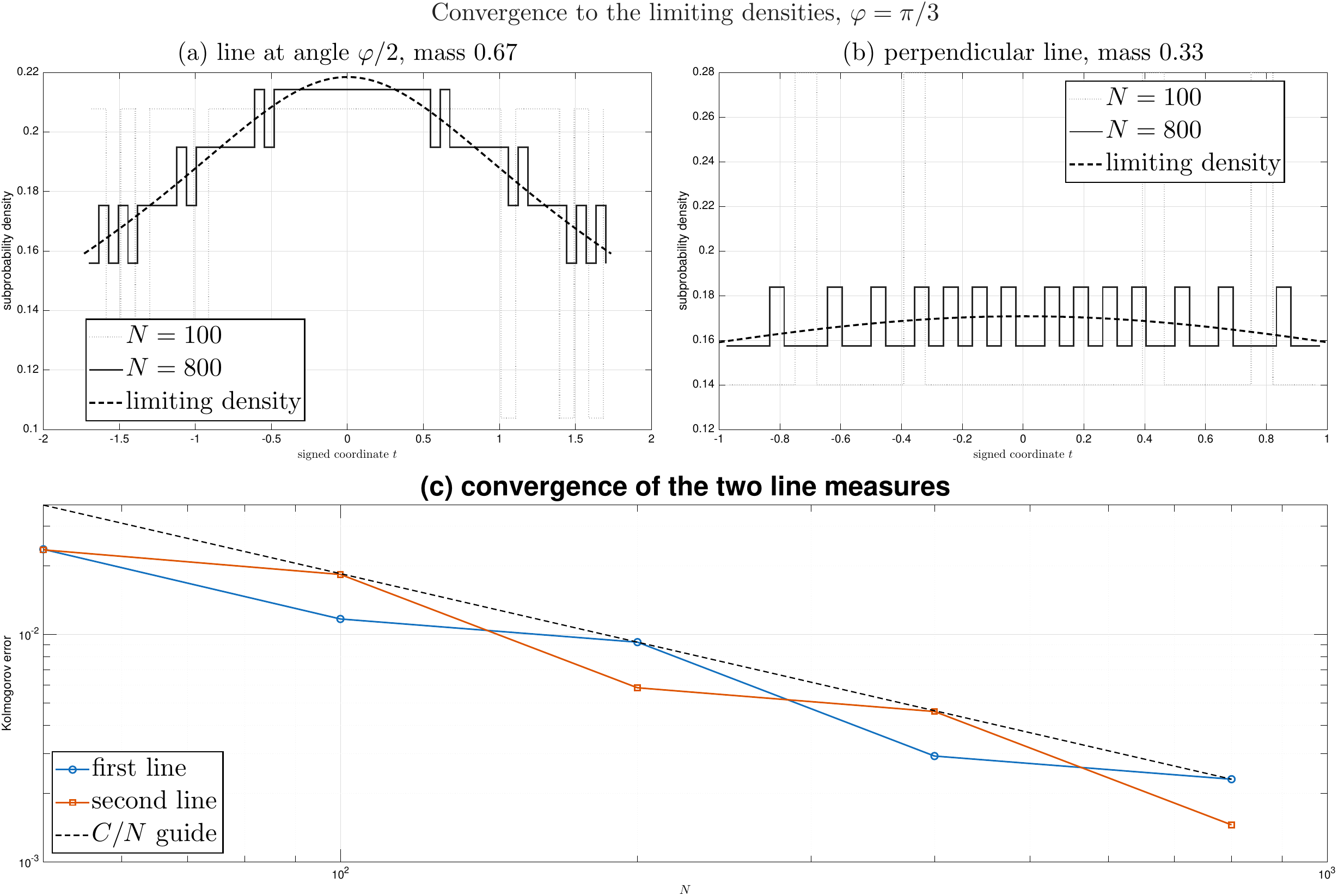}
    \caption{Eigenvalue distribution of $A_N(\varphi,0)$ for \(\varphi=\pi/3\).  Subfigure (a) shows a 48-bin
    histogram of the coordinates on the line at complex angle $\pi/6$,
    together with the limiting subprobability density \(g_{\pi/3,+}\).
    Subfigure (b) shows the same for the perpendicular line
    \(g_{\pi/3,-}\).
    Subfigure (c) compares
    the radial distribution function \(F_N\) with the limiting radial distribution function \(H\), for both perpendicular lines individually.  The subfigure shows the numerical
    (Kolmogorov) error \(\|F_N-H\|_\infty\) for both even
    congruence classes.  The \(N^{-1}\) line is a numerical reference
    slope; \Cref{thm:intro-limit} proves an \(N^{-1}\) polynomial-moment
    error, not a Kolmogorov bound.}
    \label{fig:density-convergence}
\end{figure}

\subsection*{Organization}

The paper is organized as follows:
\begin{itemize}
\item \Cref{sec:unit-modulus} develops the parity-adapted basis, the
zero-diagonal path criteria, and the even aligned-periodic theorem.
\Cref{sub:phase-opening} proves \Cref{thm:intro-central-cross}.

\item \Cref{sec:scottish} treats \(\varphi=\pi/2\).
\Cref{prop:central-scottish-factorization} factors the central polynomial;
\Cref{thm:intro-scottish} handles the even periodic matrix, and
\Cref{thm:odd-scottish} handles the odd Scottish flag matrix.  They are
combined in the all-dimensional \Cref{thm:original-scottish}.

\item \Cref{sec:distribution} proves \Cref{thm:intro-limit} and the
full-sequence Scottish corollary \Cref{cor:full-scottish-limit}.
\item Appendices \ref{app:mirrored}, \ref{app:scottish-zero}, and
\ref{app:antiperiodic-zero} contain the determinant and zero-eigenvalue
computations.
\end{itemize}
\textbf{Connection to torus quantization}
Write $\tau=e^{i\kappa}$ and introduce the diagonal gauge
\[
U_\kappa=\operatorname{diag}\bigl(e^{-ij\kappa/N}\bigr)_{j\in\mathbb Z/N\mathbb Z}.
\]
Then
\[
U_\kappa^{-1}S_\tau U_\kappa=e^{i\kappa/N}S,
\]
so that
\[
U_\kappa^{-1}A_N^\tau(\varphi,\vartheta)U_\kappa
=
D_N(\vartheta)
+\frac{e^{i\varphi}}2
\left(
e^{i\kappa/N}S+e^{-i\kappa/N}S^{-1}
\right).
\]
Since $D_N(\vartheta)$ is the torus quantization of
$\cos(x+\vartheta)$ and
\[
\frac12\left(
e^{i\kappa/N}S+e^{-i\kappa/N}S^{-1}
\right)
\]
is the quantization of $\cos(\xi+\kappa/N)$, we obtain
\[
A_N^\tau(\varphi,\vartheta)
\simeq
\operatorname{Op}\!\left[
\cos(x+\vartheta)
+e^{i\varphi}\cos\!\left(\xi+\frac{\kappa}{N}\right)
\right],
\]
where $\simeq$ denotes unitary equivalence. Thus the boundary twist
$\tau$ may equivalently be viewed as an $N^{-1}\kappa$ shift of the
momentum variable in the periodic quantization.

We emphasize that in the microlocal literature, see for instance \cite{CZ10}, it is usually the symbol $\cos(x)+i\cos(\xi)$ that is associated with the Scottish flag operator. This operator corresponds to our matrix $A_N(\pi/2,0)$ and does not have its spectrum on the cross for general odd $N$, unlike the original operator considered by Chapman-Trefethen \cite{TrefethenChapman2004}.

\subsection*{Future Directions:} 
There are at least three natural generalizations of this theory that seem worth pursuing and are of different level of difficulty.
\begin{itemize}
\item An immediate generalization is to consider general rational frequencies $p/N$ rather than only $1/N.$ Although our family of operators $A_N$ has frequency \(1/N\) in the cosine potential, nonprimitive rational
frequencies reduce exactly to primitive ones.  For \(1\le p<N\), set
\[
 D_{N,p}(\vartheta):=
 \diag\!\left(\cos\!\left(\frac{2\pi p j}{N}+\vartheta\right)\right)_{j\in\Z/N\Z} \text{ and }
 A_{N,p}^{\tau}(\varphi,\vartheta)
 :=
 \e^{i\varphi}\frac{S_\tau+S_\tau^{-1}}2+D_{N,p}(\vartheta).
\]
Write
\[
 d:=\gcd(p,N),\qquad p=da,\qquad N=dq,\qquad \gcd(a,q)=1.
\]
Then the primitive denominator \(q\), rather than the ambient dimension
\(N\), controls the finite spectral geometry. As a first result one should be able to then show:

\textbf{Theorem}
For every \(|\tau|=1\),
\[
 A_{N,p}^{\tau}(\varphi,\vartheta)
 \simeq
 \bigoplus_{\zeta^d=\tau}A_{q,a}^{\zeta}(\varphi,\vartheta),
\]
where \(A_{q,a}^{\zeta}\) is the \(q\)-dimensional matrix with frequency
\(a/q\) and boundary twist \(\zeta\).  In particular,
\[
 \spec A_{N,p}^{\tau}(\varphi,\vartheta)
 =
 \bigcup_{\zeta^d=\tau}
 \spec A_{q,a}^{\zeta}(\varphi,\vartheta),
\]
with algebraic multiplicity and analyze this more complicated family, where it seems that many more spectra are possible.

\item In addition, one might try to understand irrational frequencies and study the corresponding discrete operator on $\ell^2(\mathbb Z)$. There seem to be many interesting questions about the nature of the spectrum, whether the spectrum lies on a finite union of lines, what is the spectral type etc.. 

\item The mechanism behind the Scottish flag phenomenon suggests a broader class of finite-dimensional non-self-adjoint models with higher rotational symmetries.  In the present setting, the quadratic-phase transform produces a $\mathbb Z_2$-graded, zero-diagonal matrix, so that squaring reduces the spectral problem to a real symmetric Jacobi matrix; the resulting relation $z^2\in e^{i\phi}\mathbb R$ is responsible for the two-line structure, while an additional reversal symmetry yields the fourfold symmetry at $\phi=\pi/2$.  A natural extension is to replace this $\mathbb Z_2$ grading by a cyclic $\mathbb Z_m$ grading, so that $A^m$ reduces to a phase multiple of a self-adjoint operator.  One would then expect
\[
    \operatorname{spec}(A)
    \subset
    \bigcup_{j=0}^{m-1}
    e^{i(\alpha+j\pi/m)}\mathbb R,
\]
with an additional sign-reversal symmetry potentially producing a $2m$-fold rotationally invariant spectrum and characteristic polynomials of the form
\[
    \det(z-A)=z^r\prod_j\bigl(z^{2m}-c_j\bigr).
\]
\end{itemize}

\textbf{Acknowledgments.}  
The authors thank Shengtong Zhang for discussions, including the reduction of
\(A_N\) to a cyclic tridiagonal matrix.  The second author was supported by
NSF grant DMS-2136217.

The authors used ChatGPT (OpenAI) to assist
with finding the chiral gauge transform for the real symmetric tridiagonal form as in \cite{JK19}, with locating
results on limiting eigenvalue distributions of real symmetric
tridiagonal matrices, and with editing.
The authors
independently verified all mathematical arguments and references and take full
responsibility for the contents of the manuscript.

\section{Two-line spectra}
\label{sec:unit-modulus}

The matrix \(A_N(\varphi,\vartheta)\) is in general neither self-adjoint nor normal, and
its diagonal and off-diagonal parts do not commute.  Both of the finite-\(N\)
theorems above rest on one structural observation: in a suitable orthonormal
basis, \(A_N(\varphi,\vartheta)\) is a \emph{cyclic tridiagonal matrix with
vanishing diagonal}, which is sometimes called the \emph{zero-diagonal form} \cite{JK19}.  Throughout the manuscript, we call an edge product of such a matrix the product of the $(k,k+1)$ and $(k+1,k)$ entry. For such a matrix the spectrum is controlled by the
products of opposite off-diagonal entries, and the geometry of the
spectrum is dictated by the signs of those products.  We first derive the basis and the products \(\rho_j\) in
\Cref{sub:basis}, then collect the needed tridiagonal linear algebra in
\Cref{sub:edge-products}, and finally apply it in
\Cref{sub:aligned,sub:arbitrary}.

\subsection{A change of basis}
\label{sub:basis}

For arbitrary \(N\ge3\), let $\varepsilon_N:=N\bmod2\in\{0,1\},
 \delta_N:=\frac{1-\varepsilon_N}{2},$ and $\omega:=\e^{2\pi i/N},$
and define
\begin{equation}
\label{eq:parity-chirp-basis}
 \psi_k(j)=\frac1{\sqrt N}
 \exp\!\left(\frac{\pi i j(j-\varepsilon_N)}N\right)\omega^{jk},
 \qquad j,k\in\Z/N\Z.
\end{equation}
The factor is \(N\)-periodic because \(N-\varepsilon_N\) is even;
thus \eqref{eq:parity-chirp-basis} is well defined for both parities of
\(N\), and the basis is orthonormal.  When \(N\) is even this is the basis
\(\exp(\pi i j^2/N)\omega^{jk}/\sqrt N\) used below in the even-dimensional argument, while for odd \(N\) the periodic exponential is
\(\exp(\pi i j(j-1)/N)\).

Multiplication by the shifted cosine gives
\[
 D_N(\vartheta)\psi_k
 =\frac{\e^{i\vartheta}\psi_{k+1}
 +\e^{-i\vartheta}\psi_{k-1}}2.
\]
A direct shift of the exponential yields
\[
 C_N\psi_k
 =\frac{\omega^{k+\delta_N}\psi_{k+1}
 +\omega^{1-\delta_N-k}\psi_{k-1}}2.
\]
We define the basis coefficients as 
\[
 \alpha_{k,N}:=\frac{\e^{i\vartheta}
 +\e^{i\varphi}\omega^{k+\delta_N}}2,
 \qquad
 \beta_{k,N}:=\frac{\e^{-i\vartheta}
 +\e^{i\varphi}\omega^{1-\delta_N-k}}2.
\]
Consequently,
\begin{equation}
\label{eq:tridiagonal-form}
 A_N(\varphi,\vartheta)\psi_k
 =\alpha_{k,N}\psi_{k+1}+\beta_{k,N}\psi_{k-1}.
\end{equation}
Thus \(A_N(\varphi,\vartheta)\) has zero diagonal in this basis and only
nearest-neighbour entries on the cycle.  The product of the two entries across the edge \(k\leftrightarrow k+1\) is
\begin{align}
 \rho_{k,N}(\varphi,\vartheta)
 &=\alpha_{k,N}\beta_{k+1,N}=\frac{\e^{i\varphi}}2
 \left(\cos\varphi+
 \cos\!\left(\frac{2\pi(k+\delta_N)}N-\vartheta\right)\right).
 \label{eq:general-phase-product}
\end{align}
This proves \eqref{eq:general-phase-product-intro} for every \(N\).  All products \(\rho_{j,N}\) lie on \(\e^{i\varphi}\R\), but this common phase alone does not force the eigenvalues onto two lines; the ordering of the products and the reflection reduction also enter.

\subsection{A tridiagonal matrix calculation}
\label{sub:edge-products}

Let \(T \in \mathbb C^{n \times n}\) be tridiagonal with zero diagonal.  Write
\(u_j:=T_{j,j+1}\), \(\ell_j:=T_{j+1,j}\), and
\(\rho_j:=u_j\ell_j\).  Thus
\begin{equation}
\label{eq:path-matrix}
 T=\begin{pmatrix}
 0&u_1\\
 \ell_1&0&u_2\\
 &\ell_2&0&\ddots\\
 &&\ddots&\ddots&u_{n-1}\\
 &&&\ell_{n-1}&0
 \end{pmatrix},
 \qquad \rho_j=u_j\ell_j.
\end{equation}
Let \(T_k\) be the leading \(k\times k\) block and put
\begin{equation*}
 p_k(z)=\det(zI_k-T_k),
 \qquad p_0(z)=1,
 \qquad p_1(z)=z.
\end{equation*}
Expanding \(p_{k+1}\) along its last row and then along the last column of the
remaining minor gives
\begin{align*}
 p_{k+1}(z)
 &=z\det(zI_k-T_k)-(-\ell_k)(-u_k)\det(zI_{k-1}-T_{k-1})=zp_k(z)-\rho_kp_{k-1}(z).
\end{align*}
Thus $p_0=1, p_1=z,$ and
\begin{equation}
\label{eq:continuant}
 p_{k+1}=zp_k-\rho_kp_{k-1},
\end{equation}
so that the characteristic polynomial \(\det(zI-T)=p_n(z)\) depends on the two
off-diagonal entries associated with an edge only through their product \(\rho_j\).  This is
why \eqref{eq:general-phase-product}, and not the individual coefficients
\(\alpha_{k,N},\beta_{k,N}\), is the relevant input.

A real symmetric tridiagonal matrix whose off-diagonal entries are all
nonzero has simple spectrum.  Indeed, if the first coordinate of an
eigenvector vanishes, the first row forces the second to vanish and the
recurrence then forces the entire vector to vanish.  For a fixed
eigenvalue the recurrence therefore determines every coordinate from the
first, so the eigenspace is at most
one-dimensional; symmetry then implies algebraic simplicity.  See, for
example, \cite[Section~1.2]{Teschl2000} or \cite[Chapter~I]{GantmacherKrein2002}.
The first step is that \(T\) can be brought into symmetric form.

\begin{lemma}\label{lem:diagonal-similarity}
Assume \(\rho_j\ne0\) for \(1\le j<n\).  For arbitrary square roots
\(w_j^2=\rho_j\), there is a diagonal matrix \(S\) such that
\begin{equation}
\label{eq:symmetrization}
 S^{-1}TS=
 \begin{pmatrix}
 0&w_1\\
 w_1&0&w_2\\
 &w_2&0&\ddots\\
 &&\ddots&\ddots&w_{n-1}\\
 &&&w_{n-1}&0
 \end{pmatrix},
 \qquad
 \frac{S_{j+1,j+1}}{S_{j,j}}=\frac{w_j}{u_j}.
\end{equation}
In particular, if \(\rho_j=\e^{2i\theta}a_j\) with \(\theta\in\R\) and \(a_j>0\), then
\begin{equation*}
 S^{-1}TS=\e^{i\theta}J,
 \qquad J=J^T\in\R^{n\times n},
 \qquad J_{j,j+1}=\sqrt{a_j}>0.
\end{equation*}
Hence \(\spec(T)\subset\e^{i\theta}\R\), and the eigenvalues are simple.
\end{lemma}

\begin{proof}
Write \(S=\diag(s_1,\ldots,s_n)\) with \(s_1=1\).  The two entries across
edge \(j\leftrightarrow j+1\) of \(S^{-1}TS\) are
\begin{equation*}
 (S^{-1}TS)_{j,j+1}=s_j^{-1}u_js_{j+1},
 \qquad
 (S^{-1}TS)_{j+1,j}=s_{j+1}^{-1}\ell_js_j.
\end{equation*}
Choose the diagonal entries recursively by $s_{j+1}=s_j\frac{w_j}{u_j}$ for $ 1\le j<n.$
Then the upper entry equals \(w_j\), while the lower entry is $s_{j+1}^{-1}\ell_js_j
 =\frac{u_j}{w_j}\ell_j
 =\frac{\rho_j}{w_j}
 =w_j.$
This proves \eqref{eq:symmetrization}.  If
\(\rho_j=\e^{2i\theta}a_j\) with \(a_j>0\), take $w_j=\e^{i\theta}\sqrt{a_j}.$
The resulting symmetric matrix is \(\e^{i\theta}J\), with \(J\) real and symmetric and with every off-diagonal entry nonzero.  Hence
\begin{equation*}
 \spec(T)=\e^{i\theta}\spec(J)\subset\e^{i\theta}\R.
\end{equation*}
In particular, since \(J\) is real symmetric,
it is diagonalizable, and because every off-diagonal entry is nonzero, every eigenvalue is simple; compare
\cite[Section~1.2]{Teschl2000}.
\end{proof}

\Cref{lem:diagonal-similarity} says that products \(\rho_j\) with a common phase
\(\e^{2i\theta}\) and \emph{positive} real factors force the spectrum onto the
single line \(\e^{i\theta}\R\).  The matrices of \Cref{sub:basis} need not
satisfy this hypothesis: by \eqref{eq:general-phase-product} the real factors
\(\cos\varphi+\cos(\cdot)\) may change sign.  When they do, a sign change is exactly the passage
from \(\rho\in\e^{2i\theta}(0,\infty)\) to
\(\rho\in\e^{2i(\theta+\pi/2)}(0,\infty)\), so one expects a second line
orthogonal to the first.  The mechanism that produces it is the following
splitting into the matrices \(B^TB\) and \(BB^T\), which we use three more times below.

\begin{proposition}\label{prop:odd-even}
Let \(T\) be as in \eqref{eq:path-matrix} with
\(\rho_j\ne0\) for \(1\le j<n\), let \(w_j\) be
square roots \(w_j^2=\rho_j\), and let \(S\) be as in
\Cref{lem:diagonal-similarity}.  Let \(P\) be the permutation listing the odd
indices before the even ones,
\[
 P: (1,2,\ldots,n)\longmapsto
 (1,3,5,\ldots,2,4,6,\ldots),
\]
and set \(s=\lfloor n/2\rfloor\).  Define
\(B\in\C^{(n-s)\times s}\) to be the bidiagonal matrix with entries
\[
 B_{r,r}:=w_{2r-1}\quad(1\le r\le s),
 \qquad
 B_{r+1,r}:=w_{2r}
 \quad(1\le r\le s,\ r+1\le n-s).
\]
Here \(B^T\) denotes ordinary transpose, not conjugate transpose.  With
the convention \(w_0=w_n=0\), we have
\begin{equation}
\label{eq:block-B}
 P^TS^{-1}TSP=
 \begin{pmatrix}0&B\\B^T&0\end{pmatrix}.
\end{equation}
Moreover,
\begin{equation}
\label{eq:Schur}
 \det(zI-T)=z^{\,n-2s}\det(z^2I_s-B^TB),
 \qquad
 \det(tI_{n-s}-BB^T)=t^{\,n-2s}\det(tI_s-B^TB),
\end{equation}
and the two matrices \(B^TB\) and \(BB^T\) are the tridiagonal matrices
\begin{equation}
\label{eq:gram-entries}
\begin{aligned}
 (B^TB)_{r,r}&=w_{2r-1}^2+w_{2r}^2,
 &(B^TB)_{r,r+1}&=w_{2r}w_{2r+1},\\
 (BB^T)_{r,r}&=w_{2r-2}^2+w_{2r-1}^2,
 &(BB^T)_{r,r+1}&=w_{2r-1}w_{2r}.
\end{aligned}
\end{equation}
The formulas are understood whenever the displayed indices lie in range.
The off-diagonal entries of \(B^TB\) are the consecutive
products \(w_2w_3,w_4w_5,\ldots\), and those of \(BB^T\) are
\(w_1w_2,w_3w_4,\ldots\); every consecutive product \(w_jw_{j+1}\) occurs in
exactly one of the two matrices \(B^TB\) and \(BB^T\), for
\(1\le j\le n-2\).
\end{proposition}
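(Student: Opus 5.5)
By \Cref{lem:diagonal-similarity}, it suffices to work with the symmetric matrix \(W:=S^{-1}TS\). It has zero diagonal and entries \(W_{j,j+1}=W_{j+1,j}=w_j\). The whole proposition then comes down to bookkeeping of the bipartite (odd/even) structure of the path graph, followed by a Schur complement.

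\textbf{Step 1: block form.} The nonzero entries of \(W\) sit at positions \((j,j\pm1)\). These always connect an odd index to an even index, so after the permutation \(P\) (odd indices first) the two diagonal blocks vanish. The odd indices \(1,3,\dots\) number \(n-s\) and the even indices number \(s\), so the off-diagonal block \(B\) has size \((n-s)\times s\). Its \((r,c)\) entry is \(W_{2r-1,2c}\). This entry is nonzero only for \(2c=2r-1\pm1\), i.e. \(c=r\) (giving \(w_{2r-1}\)) or \(c=r-1\) (giving \(W_{2r-1,2r-2}=w_{2r-2}\)). Reindexing the second case gives \(B_{r+1,r}=w_{2r}\). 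The lower-left block is \(B^T\) because \(W\) is symmetric. This proves \eqref{eq:block-B}, and the convention \(w_0=w_n=0\) takes care of the boundary rows.

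\textbf{Step 2: determinants.} Similarity preserves the characteristic polynomial. For \(z\neq0\) I would take the Schur complement with respect to the upper-left block \(zI_{n-s}\):
\[
\det\begin{pmatrix} zI_{n-s}&-B\\-B^T&zI_s\end{pmatrix}=z^{n-s}\det\bigl(zI_s-z^{-1}B^TB\bigr)=z^{n-2s}\det(z^2I_s-B^TB).
\]
Both sides are polynomials in \(z\), so the identity extends to \(z=0\). The second identity in \eqref{eq:Schur} is the standard Sylvester relation \(\det(tI_m-XY)=t^{m-k}\det(tI_k-YX)\) for \(X\in\C^{m\times k}\) and \(Y\in\C^{k\times m}\). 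It can be proved by the same Schur complement argument, applied to \(\begin{pmatrix} tI&X\\Y&I\end{pmatrix}\) in both orders.

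\textbf{Step 3: Gram entries.} Column \(r\) of \(B\) has nonzero entries only in rows \(r\) and \(r+1\), namely \(w_{2r-1}\) and \(w_{2r}\). Hence \((B^TB)_{r,r}=w_{2r-1}^2+w_{2r}^2\). Columns \(r\) and \(r+1\) overlap only in row \(r+1\), which gives \((B^TB)_{r,r+1}=w_{2r}w_{2r+1}\). Columns further apart are disjoint, so \(B^TB\) is tridiagonal. In the same way, row \(r\) of \(B\) has entries \(w_{2r-2}\) (column \(r-1\)) and \(w_{2r-1}\) (column \(r\)). This gives \((BB^T)_{r,r}=w_{2r-2}^2+w_{2r-1}^2\), and rows \(r\) and \(r+1\) share only column \(r\), giving \((BB^T)_{r,r+1}=w_{2r-1}w_{2r}\). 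The final claim follows by parity. A product \(w_jw_{j+1}\) with \(j\) even appears as \((B^TB)_{j/2,j/2+1}\), and one with \(j\) odd appears as \((BB^T)_{(j+1)/2,(j+3)/2}\). Checking that these indices stay in range for \(1\le j\le n-2\) is routine.

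The only step that needs real care is the index bookkeeping at the boundary. The last row of \(B\) exists when \(n\) is odd (\(n-s=s+1\)) but not when \(n\) is even, and the diagonal formulas for \(B^TB\) and \(BB^T\) at the edge rows must be reconciled with \(w_0=w_n=0\). The algebra itself is mechanical.
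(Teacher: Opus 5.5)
Your proposal is correct and follows the paper's proof: symmetrize via \Cref{lem:diagonal-similarity}, use the odd--even bipartite structure of the zero-diagonal path for the block form, take a Schur complement for $z\ne0$ and extend by polynomial continuation, and obtain the Gram entries by direct multiplication. Your version only adds the index and range bookkeeping that the paper leaves implicit, and it derives the second identity in \eqref{eq:Schur} from the general Sylvester relation rather than from the Schur complement of the same block matrix taken the other way. That difference is inessential.
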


\begin{proof}
By \Cref{lem:diagonal-similarity}, \(S^{-1}TS\) is symmetric tridiagonal with
zero diagonal and off-diagonal entries \(w_1,\ldots,w_{n-1}\).  A zero
diagonal means that this matrix maps the span of the odd basis vectors into
the span of the even ones and conversely, which is exactly
\eqref{eq:block-B}.  The odd index \(2r-1\) is adjacent only to the even
indices \(2r-2\) and \(2r\), with weights \(w_{2r-2}\) and \(w_{2r-1}\); this
gives the stated entries of \(B\), and \eqref{eq:gram-entries} follows by
direct multiplication.

For \(z\ne0\), a direct block determinant calculation gives
\begin{align*}
 \det\begin{pmatrix}zI_{n-s}&-B\\-B^T&zI_s\end{pmatrix}
 &=z^{n-s}\det\left(zI_s-z^{-1}B^TB\right)
 =z^{\,n-2s}\det(z^2I_s-B^TB).
\end{align*}
Since both sides are polynomials in \(z\), the identity extends from
\(z\ne0\) to every \(z\in\C\).  Applying the same Schur-complement
calculation with the two block sizes interchanged, initially for
\(t\ne0\), and then using polynomial continuation proves the second
identity in \eqref{eq:Schur}.
\end{proof}
We then obtain the first result showing inclusion of the spectrum in the two perpendicular lines. 
\begin{theorem}
\label{thm:one-sign-change}
Let \(T\) be a zero-diagonal tridiagonal matrix as in
\eqref{eq:path-matrix}, and suppose that, for some \(\alpha\in\R\) and real
numbers \(c_j\), its products \(\rho_j\) satisfy $\rho_j=\e^{2i\alpha}c_j.$
Assume that, on every maximal consecutive block where \(c_j\ne0\), the
signs change at most once.  Then
\[
 \spec(T)\subset\cX_\alpha.
\]
\end{theorem}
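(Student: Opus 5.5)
First reduce to a single block. If some $c_j=0$, then $\rho_j=0$ and $T$ is block lower (or upper) triangular across that edge: either $u_j=0$ or $\ell_j=0$. Hence $\det(zI-T)$ factors as the product of the characteristic polynomials of the diagonal blocks. This also follows directly from the continuant recurrence \eqref{eq:continuant}, since the term $\rho_jp_{j-1}$ drops out. Each diagonal block is again a zero-diagonal tridiagonal matrix whose products are the $\rho_j$ on a maximal consecutive block where $c_j\neq0$. A $1\times1$ block contributes only the eigenvalue $0\in\cX_\alpha$. So it suffices to prove the claim when all $c_j\neq0$ and the signs of $(c_1,\dots,c_{n-1})$ change at most once.

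If there is no sign change, the products lie on a single ray. When all $c_j>0$, \Cref{lem:diagonal-similarity} with $\theta=\alpha$ gives $\spec(T)\subset\e^{i\alpha}\R$. When all $c_j<0$, write $\rho_j=\e^{2i(\alpha+\pi/2)}|c_j|$ and apply the same lemma with $\theta=\alpha+\pi/2$. Either way the spectrum lies in $\cX_\alpha$.

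Now suppose there is exactly one sign change, say $c_j>0$ for $j\le m$ and $c_j<0$ for $j>m$; the opposite case is symmetric after replacing $\alpha$ by $\alpha+\pi/2$. Choose $w_j=\e^{i\alpha}\sqrt{c_j}$ for $j\le m$ and $w_j=i\e^{i\alpha}\sqrt{|c_j|}$ for $j>m$. By \Cref{prop:odd-even}, $\det(zI-T)=z^{n-2s}\det(z^2I_s-B^TB)$, so it is enough to show $\spec(B^TB)\subset\e^{2i\alpha}\R$. Then $z^2\in\e^{2i\alpha}\R$, which gives $z\in\cX_\alpha$.

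The diagonal entries of $B^TB$ are $w_{2r-1}^2+w_{2r}^2=\e^{2i\alpha}(c_{2r-1}+c_{2r})$, which is $\e^{2i\alpha}$ times a real number. The off-diagonal entries are $w_{2r}w_{2r+1}$. This equals $\e^{2i\alpha}$ times a real number whenever $2r$ and $2r+1$ lie on the same side of $m$. Since only one index pair straddles the change, at most one off-diagonal entry of $B^TB$, namely $w_mw_{m+1}$ when $m$ is even, equals $i\e^{2i\alpha}$ times a real number. If $m$ is odd, the straddling product $w_mw_{m+1}$ lies in $BB^T$ instead. In that case $B^TB=\e^{2i\alpha}J$ with $J$ real symmetric, and the claim is immediate.

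If $m$ is even, I conjugate $B^TB$ by the diagonal matrix $\diag(1,\dots,1,i,\dots,i)$ that switches to $i$ just after the straddling index. This changes only the two entries across that one edge, multiplying one by $i$ and the other by $i^{-1}$. Their product $(w_mw_{m+1})^2=\e^{4i\alpha}c_m|c_{m+1}|\cdot(-1)$ is $\e^{4i\alpha}$ times a positive real number, because $c_m>0>c_{m+1}$. Equivalently, since $B^TB$ is tridiagonal, its spectrum depends only on the diagonal entries and the edge products, by the same continuant recurrence with a diagonal term added. All diagonal entries lie in $\e^{2i\alpha}\R$, and all edge products $(w_{2r}w_{2r+1})^2$ lie in $\e^{4i\alpha}(0,\infty)$. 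By the diagonal similarity of \Cref{lem:diagonal-similarity}, which works verbatim with a nonzero diagonal, $B^TB$ is similar to $\e^{2i\alpha}J$ with $J$ real symmetric. This gives the claim.

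The main obstacle is this sign bookkeeping: checking that every edge product of $B^TB$ is $\e^{4i\alpha}$ times a positive number. This is exactly where the hypothesis of at most one sign change is used. The straddling product is $\e^{4i\alpha}(-c_mc_{m+1})>0$. Every non-straddling product is $\e^{4i\alpha}c_{2r}c_{2r+1}$, and $c_{2r}c_{2r+1}>0$ because the two factors share a sign. Two sign changes would not break this check on their own. What goes wrong then is the global parametrization by a single split point $m$, which is why I keep the argument written in terms of $m$.
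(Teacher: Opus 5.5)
Your reduction to blocks with nonzero products, the constant-sign case, and the case $m$ odd are fine. The case $m$ even, however, contains a sign error that breaks the argument. The straddling off-diagonal entry of $B^TB$ is $w_mw_{m+1}=i\e^{2i\alpha}\sqrt{c_m|c_{m+1}|}$, so the product of the two entries across that edge is
\[
 (w_mw_{m+1})^2=\rho_m\rho_{m+1}=\e^{4i\alpha}c_mc_{m+1}=-\e^{4i\alpha}c_m|c_{m+1}|.
\]
This is $\e^{4i\alpha}$ times a \emph{negative} number, because $c_m>0>c_{m+1}$. You wrote this expression and then called it positive, and later rewrote it as $\e^{4i\alpha}(-c_mc_{m+1})$, which has the wrong sign.

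A diagonal similarity preserves diagonal entries and edge products, so no diagonal conjugation can make $\e^{-2i\alpha}B^TB$ real symmetric. Your $\diag(1,\dots,1,i,\dots,i)$ only produces the real but antisymmetric pair $\mp\sqrt{c_m|c_{m+1}|}$ across that edge. A real tridiagonal matrix with a negative edge product can have non-real spectrum (e.g.\ $\begin{pmatrix}0&1\\-1&0\end{pmatrix}$), so \Cref{lem:diagonal-similarity} gives you nothing here. Concretely, for $n=4$, $\alpha=0$, $(c_1,c_2,c_3)=(1,1,-1)$ you get $B^TB=\begin{pmatrix}2&i\\ i&-1\end{pmatrix}$, which is not diagonally similar to any real symmetric matrix. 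Its spectrum happens to be real, but for a reason your argument does not capture.

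The missing idea is to use both Gram matrices. By the second identity in \eqref{eq:Schur}, $B^TB$ and $BB^T$ have the same nonzero spectrum, and the single straddling product $w_mw_{m+1}$ appears off the diagonal of exactly one of them. When $m$ is even it sits in $B^TB$, so you should pass to $BB^T$. Its off-diagonal entries $w_{2r-1}w_{2r}$ never straddle the sign change, so $\e^{-2i\alpha}BB^T$ is real symmetric. Then every nonzero eigenvalue $z$ of $T$ satisfies $z^2\in\spec(BB^T)\subset\e^{2i\alpha}\R$; in the example above, $BB^T=\begin{pmatrix}1&1\\1&0\end{pmatrix}$. This is exactly the paper's argument.

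It also corrects your closing remark. The hypothesis of at most one sign change is used precisely so that at least one of $B^TB$, $BB^T$ is free of bad products. With two sign changes, one bad product can land in each, and then neither Gram matrix is available.
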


\begin{proof}
Since the continuant recurrence \eqref{eq:continuant} depends on the entries
\(u_j,\ell_j\) only through the products
\(\rho_j=u_j\ell_j\), we may replace \(u_j=\ell_j=0\) whenever
\(\rho_j=0\). This does not change the characteristic polynomial and splits
the path into components on which all \(\rho_j\) are nonzero. It is therefore
enough to treat one such component.

If the \(c_j\) have constant sign, then \Cref{lem:diagonal-similarity} gives
\[
\operatorname{spec}(T)\subset e^{i\alpha}\mathbb{R}
\qquad\text{or}\qquad
\operatorname{spec}(T)\subset e^{i(\alpha+\pi/2)}\mathbb{R},
\]
according to that sign. Hence only the case of exactly one sign change
remains.

By \Cref{lem:diagonal-similarity}, we may take \(T\) symmetric, with
off-diagonal entries \(w_j\) satisfying
\[
w_j^2=\rho_j.
\]
Thus the \(w_j\) have phase \(e^{i\alpha}\) on one side of the sign change
and \(i e^{i\alpha}\) on the other. Apply the odd--even decomposition of
\Cref{prop:odd-even}. In the two Gram matrices \(B^T B\) and \(B B^T\), the
diagonal entries are sums of terms
\[
w_j^2=\rho_j\in e^{2i\alpha}\mathbb{R},
\]
while the off-diagonal entries are products \(w_jw_{j+1}\). All such products
belong to \(e^{2i\alpha}\mathbb{R}\), except the one crossing the unique sign
change. By \Cref{prop:odd-even}, this exceptional product occurs in exactly
one of \(B^T B\) and \(B B^T\). If \(K\) denotes the other matrix, then
\(e^{-2i\alpha}K\) is real symmetric. Hence
\[
\operatorname{spec}(K)\subset e^{2i\alpha}\mathbb{R}.
\]

By \eqref{eq:Schur}, the nonzero spectra of \(B^T B\) and \(B B^T\) coincide.
Therefore every nonzero \(z\in\operatorname{spec}(T)\) satisfies
\[
z^2\in\operatorname{spec}(K)\subset e^{2i\alpha}\mathbb{R}.
\]
Equivalently,
\[
z\in e^{i\alpha}\mathbb{R}
   \cup e^{i(\alpha+\pi/2)}\mathbb{R}
   =\mathcal{X}_\alpha.
\]
The conclusion is immediate for \(z=0\).
\end{proof}
\subsection{Reflection symmetry when
\texorpdfstring{\(\vartheta\in2\pi\Z/N\)}{the phase is aligned}}
\label{sub:aligned}

The model \eqref{eq:tridiagonal-form} lives on the cycle
\(\Z/N\Z\), whereas \Cref{thm:one-sign-change} is a statement about paths.
The passage from one to the other is the standard symmetry reduction:

Let \(M\ge2\), and let $L$ be a zero-diagonal cyclic tridiagonal
matrix of dimension $2M$, indexed by $\mathbb Z/(2M)\mathbb Z$, and write
\[
 u_k=L_{k,k+1},\qquad \ell_k=L_{k+1,k},\qquad \rho_k=u_k\ell_k.
\]
Let $\mathcal R$ be the reflection of the cycle defined by
$\mathcal Re_k=e_{-k}$; it fixes the two indices \(0\) and \(M\).  Suppose
$\mathcal RL=L\mathcal R$.  In particular,
\[
 u_k=\ell_{-k-1},\qquad
 \ell_k=u_{-k-1},\qquad
 \rho_k=\rho_{-k-1}.
\]
Since
\(\mathcal R^2=I\), the space splits into the \(\pm1\) eigenspaces of
\(\mathcal R\), and \(L\) preserves each.  The \(+1\) and \(-1\) reflection subspaces have orthonormal bases
\[
 p_0=e_0,\qquad
 p_k=\frac{e_k+e_{-k}}{\sqrt2}\ (1\le k<M),\qquad
 p_M=e_M,
\]
and
\[
 m_k=\frac{e_k-e_{-k}}{\sqrt2}\ (1\le k<M),
\]
respectively. Hence \(\dim L_+=M+1\) and \(\dim L_-=M-1\).

Folding the cycle preserves each interior edge product \(\rho_k\).
At a fixed endpoint, however, the two reflected couplings add. For example,
the entries of \(L_+\) between \(p_0\) and \(p_1\) are
\(\sqrt2\,u_0\) and \(\sqrt2\,\ell_0\), whose product is \(2\rho_0\).
The same calculation at \(M\) gives \(2\rho_{M-1}\). Thus
\[
 L_+:\quad
 2\rho_0,\rho_1,\ldots,\rho_{M-2},2\rho_{M-1}.
\]
The antisymmetric subspace contains neither fixed vertex, so its endpoint
edges disappear and
\[
 L_-:\quad \rho_1,\ldots,\rho_{M-2}.
\]
Since the extra factors \(2\) are positive, they do not change the signs of
the edge products.

\begin{proof}[Proof of \Cref{thm:intro-unit-modulus}]
We first consider \(\vartheta=0\). Write \(N=2M\) and let
\[
L=\widehat A_N(\varphi,0)
\]
be the matrix of \(A_N(\varphi,0)\) in the basis
\eqref{eq:parity-chirp-basis}. Since \(N\) is even,
\(\delta_N=1/2\), and \eqref{eq:tridiagonal-form} gives
\[
Le_k=\alpha_{k,N}e_{k+1}+\beta_{k,N}e_{k-1},
\]
where
\[
\alpha_{k,N}
=\frac{1+\e^{i\varphi}\omega^{k+1/2}}{2},
\qquad
\beta_{k,N}
=\frac{1+\e^{i\varphi}\omega^{1/2-k}}{2}.
\]
Hence
\[
\alpha_{-k,N}=\beta_{k,N},
\qquad
\beta_{-k,N}=\alpha_{k,N}.
\]

Let \(\mathcal R e_k=e_{-k}\). Then
\[
\mathcal RLe_k
=\alpha_{k,N}e_{-k-1}
+\beta_{k,N}e_{-k+1},
\]
whereas
\[
L\mathcal Re_k
=Le_{-k}
=\alpha_{-k,N}e_{-k+1}
+\beta_{-k,N}e_{-k-1}.
\]
Thus \(\mathcal RL=L\mathcal R\). The matrix \(L\) therefore preserves the
\(+1\) and \(-1\) eigenspaces of \(\mathcal R\), and in the
reflection-adapted basis it decomposes as
\[
L=L_+\oplus L_-.
\]
By \eqref{eq:fold-plus-products}--\eqref{eq:fold-minus-products},
the corresponding edge-product lists are
\begin{align}
 L_+&:\quad 2\rho_0,\rho_1,\ldots,\rho_{M-2},2\rho_{M-1},
 \label{eq:fold-plus-products}\\
 L_-&:\quad \rho_1,\ldots,\rho_{M-2}.
 \label{eq:fold-minus-products}
\end{align}

We now determine the signs of these products. Since
\(\delta_N=1/2\) and \(\vartheta=0\),
\eqref{eq:general-phase-product} gives
\[
\rho_k
=\frac{\e^{i\varphi}}{2}
\left(
\cos\varphi+
\cos\frac{(2k+1)\pi}{N}
\right)
=\e^{i\varphi}c_{k,N}^{(\varphi)},
\]
where
\[
c_{k,N}^{(\varphi)}
:=\frac12\left(
\cos\varphi+
\cos\frac{(2k+1)\pi}{N}
\right).
\]
For \(0\le k\le M-1\), the angles $\frac{(2k+1)\pi}{N}$
increase strictly from \(\pi/N\) to \(\pi-\pi/N\). Since
\(\cos t\) is strictly decreasing on \((0,\pi)\), the sequence
\[
c_{0,N}^{(\varphi)},\ldots,c_{M-1,N}^{(\varphi)}
\]
is strictly decreasing and therefore changes sign at most once.

Multiplying the first and last products by \(2\) does not change their
signs, and passing to the sublist
\(\rho_1,\ldots,\rho_{M-2}\) cannot create an additional sign change.
Hence both \(L_+\) and \(L_-\) satisfy the hypotheses of
\Cref{thm:one-sign-change}. Since
\[
\rho_k
=\e^{i\varphi}c_{k,N}^{(\varphi)}
=\e^{2i(\varphi/2)}c_{k,N}^{(\varphi)},
\]
we apply that theorem with \(\alpha=\varphi/2\) and obtain
\[
\spec(L_+),\spec(L_-)\subset\cX_{\varphi/2}.
\]
Therefore
\[
\spec A_N(\varphi,0)
=\spec(L)
=\spec(L_+)\cup\spec(L_-)
\subset\cX_{\varphi/2}.
\]

Now let
\[
\vartheta=\frac{2\pi m}{N}.
\]
Since \(S e_j=e_{j+1}\),
\[
S^mD_N(\vartheta)S^{-m}=D_N(0),
\]
while \(C_N=(S+S^{-1})/2\) commutes with \(S^m\). Hence
\[
S^mA_N(\varphi,\vartheta)S^{-m}
=A_N(\varphi,0).
\]
Thus all matrices with
\(\vartheta\in2\pi\Z/N\) are unitarily equivalent, and
\eqref{eq:rotated-cross-intro} follows for every such \(\vartheta\).

It remains to prove the bounds along the two lines. By the preceding
unitary equivalence, it is enough to take \(\vartheta=0\). Let
\[
A_N(\varphi,0)x=zx,
\qquad
\|x\|=1,
\]
and set
\[
a:=x^*D_N(0)x,
\qquad
b:=x^*C_Nx.
\]
Both \(D_N(0)\) and \(C_N\) are Hermitian with norm at most \(1\), so
\[
a,b\in\R,
\qquad
|a|,|b|\le1.
\]
Taking the scalar product of the eigenvalue equation with \(x\) gives
\[
z=a+\e^{i\varphi}b.
\]
Thus
\[
\begin{aligned}
\e^{-i\varphi/2}z
&=\e^{-i\varphi/2}a+\e^{i\varphi/2}b=\cos(\varphi/2)(a+b)
+i\sin(\varphi/2)(b-a).
\end{aligned}
\]

Suppose first that
\[
z\in\e^{i\varphi/2}\R.
\]
Then \(\e^{-i\varphi/2}z\) is real. If
\(\sin(\varphi/2)\ne0\), its imaginary part gives \(a=b\), and hence
\[
|z|
=2|a|\,|\cos(\varphi/2)|
\le2|\cos(\varphi/2)|.
\]
If \(\sin(\varphi/2)=0\), the same estimate follows directly from
\[
|z|
=|\cos(\varphi/2)(a+b)|
\le2|\cos(\varphi/2)|.
\]

Suppose now that
\[
z\in\e^{i(\varphi/2+\pi/2)}\R.
\]
Then \(\e^{-i\varphi/2}z\) is purely imaginary. If
\(\cos(\varphi/2)\ne0\), its real part gives \(a=-b\), and therefore
\[
|z|
=2|b|\,|\sin(\varphi/2)|
\le2|\sin(\varphi/2)|.
\]
If \(\cos(\varphi/2)=0\), the same estimate follows directly from
\[
|z|
=|\sin(\varphi/2)(b-a)|
\le2|\sin(\varphi/2)|.
\]
This proves the stated bounds.
\end{proof}

\subsection{Arbitrary potential phase and boundary twist}
\label{sub:arbitrary}

We now derive the dependence of the characteristic polynomial on the potential
phase \(\vartheta\) and the boundary twist \(\tau\). This computation is very similar to how Chambers' formula is derived in the case of AMO \cite{Chambers1965}. Recall that
\[
 \chi_{N,\varphi,\vartheta}^{\tau}(z)
 :=\det\bigl(zI-A_N^\tau(\varphi,\vartheta)\bigr),
 \qquad
 \tau=\e^{i\kappa}.
\]
Set $\xi:=\e^{i\vartheta},$ since
\[
 \cos\!\left(\frac{2\pi j}{N}+\vartheta\right)
 =\frac12\left(\xi\omega^j+\xi^{-1}\omega^{-j}\right),
\]
the \(j\)-th diagonal entry of
\(zI-A_N^\tau(\varphi,\vartheta)\) is
\[
 z-\frac12\left(\xi\omega^j+\xi^{-1}\omega^{-j}\right).
\]
The off-diagonal entries are independent of \(\xi\). Hence
\(\chi_{N,\varphi,\vartheta}^{\tau}(z)\) is a Laurent polynomial in
\(\xi\), with powers between \(-N\) and \(N\).

Cyclic translation changes \(\vartheta\) to
\(\vartheta+2\pi/N\) without changing the characteristic polynomial.
Equivalently, the Laurent polynomial is invariant under
\[
 \xi\longmapsto\omega\xi.
\]
Thus a term \(a_r\xi^r\) can occur only if
\(\omega^r=1\), that is, only if \(N\mid r\). Since \(|r|\le N\),
the only possible powers are $\xi^{-N}, 1, \xi^N.$

We next compute the coefficients of \(\xi^{\pm N}\). To obtain the term
\(\xi^N\), one must choose $-\frac{\xi\omega^j}{2}$
from every diagonal entry in the determinant expansion. Therefore, we find for the coefficient
\[
 [\xi^N]\chi_{N,\varphi,\vartheta}^{\tau}(z)
 =
 \prod_{j=0}^{N-1}\left(-\frac{\omega^j}{2}\right).
\]
Since
\[
 \prod_{j=0}^{N-1}\omega^j
 =\omega^{N(N-1)/2}
 =(-1)^{N-1},
\]
we obtain
\[
 \prod_{j=0}^{N-1}\left(-\frac{\omega^j}{2}\right)
 =(-1)^N2^{-N}(-1)^{N-1}
 =-2^{-N}.
\]
The same calculation gives the coefficient \(-2^{-N}\) for
\(\xi^{-N}\). Hence the entire \(\vartheta\)-dependent contribution is
\[
 -2^{-N}(\xi^N+\xi^{-N})
 =-2^{1-N}\cos(N\vartheta).
\]

It remains to determine the dependence on the twist. The parameter \(\tau\)
occurs only in the two entries joining the endpoints of the cyclic chain.
Any determinant term using both of these entries contains the product
\(\tau\tau^{-1}=1\) and is therefore independent of \(\tau\).
A term containing exactly one of them must traverse the entire cycle, so
there are only two such terms, corresponding to the two orientations of
the cycle.

Each oriented cycle is an \(N\)-cycle and therefore has permutation sign
\((-1)^{N-1}\). Its \(N\) matrix entries are hopping terms
\(-\e^{i\varphi}/2\). Thus the two contributions are
\[
 (-1)^{N-1}
 \left(-\frac{\e^{i\varphi}}2\right)^N\tau
 =-2^{-N}\e^{iN\varphi}\tau
\text{ and }
 (-1)^{N-1}
 \left(-\frac{\e^{i\varphi}}2\right)^N\tau^{-1}
 =-2^{-N}\e^{iN\varphi}\tau^{-1}.
\]
Their sum is
\[
 -2^{-N}\e^{iN\varphi}(\tau+\tau^{-1})
 =-2^{1-N}\e^{iN\varphi}\cos\kappa.
\]

All remaining terms are independent of both \(\vartheta\) and \(\tau\).
Collecting them into a monic polynomial \(Q_{N,\varphi}(z)\), we obtain
\begin{equation}
 \label{eq:twisted-Chambers-Q}
 \chi_{N,\varphi,\vartheta}^{\tau}(z)
 =
 Q_{N,\varphi}(z)
 -2^{1-N}\cos(N\vartheta)
 -2^{1-N}\e^{iN\varphi}\cos\kappa.
\end{equation}
Evaluating this identity at \((\vartheta,\tau)=(0,1)\) gives
\[
 \chi_{N,\varphi,0}^{1}(z)
 =
 Q_{N,\varphi}(z)
 -2^{1-N}
 -2^{1-N}\e^{iN\varphi}.
\]
Subtracting the two formulas yields
\begin{equation}
 \label{eq:twisted-Chambers-intro2}
 \chi_{N,\varphi,\vartheta}^{\tau}(z)
 =
 \chi_{N,\varphi,0}^{1}(z)
 +2^{1-N}\bigl(1-\cos(N\vartheta)\bigr)
 +2^{1-N}\e^{iN\varphi}\bigl(1-\cos\kappa\bigr).
\end{equation}

For periodic boundary conditions, \(\tau=1\), so \(\kappa=0\), and the
last term vanishes. Therefore
\begin{equation}
 \label{eq:Chambers-phase}
 \chi_{N,\varphi,\vartheta}^{1}(z)
 =
 \chi_{N,\varphi,0}^{1}(z)
 +2^{1-N}\bigl(1-\cos(N\vartheta)\bigr).
\end{equation}
This is the frequency-\(1/N\) determinant formula in our normalization;
compare
\cite{Chambers1965,LamoureuxMingo2007,
JitomirskayaKonstantinovKrasovsky2022}.

Formula \eqref{eq:Chambers-phase} makes the potential-phase dependence
particularly simple. If $\vartheta\in\frac{2\pi}{N}\Z,$
then \(\cos(N\vartheta)=1\), and hence
\[
 \chi_{N,\varphi,\vartheta}^{1}(z)
 =
 \chi_{N,\varphi,0}^{1}(z).
\]
Thus all these periodic fibres are isospectral. For a general
\(\vartheta\), however, the phase changes the constant term of the
characteristic polynomial, and the spectrum need not remain on the two
distinguished lines.

Finally, we specialize the determinant identity to the fibre corresponding
to the Scottish flag matrix. Take
\[
 \varphi=\vartheta=\frac{\pi}{2},
 \qquad
 \tau=i^N=\e^{iN\pi/2},
\]
so we may choose $\kappa=\frac{N\pi}{2}.$
If \(4\mid N\), then $\cos(N\vartheta)=\cos\kappa=1.$
Both correction terms in \eqref{eq:twisted-Chambers-intro2} vanish, and
therefore $\chi_{N,\pi/2,\pi/2}^{i^N}(z)
 =
 \chi_{N,\pi/2,0}^{1}(z).$

If \(N\equiv2\pmod4\), then $\cos(N\vartheta)=\cos\kappa=-1$ and $
 \e^{iN\varphi}=-1.$
The two correction terms in \eqref{eq:twisted-Chambers-intro2} are therefore
\[
 2^{1-N}(1-(-1))
 \qquad\text{and}\qquad
 -2^{1-N}(1-(-1)),
\]
and cancel. Hence again
\[
 \chi_{N,\pi/2,\pi/2}^{i^N}(z)
 =
 \chi_{N,\pi/2,0}^{1}(z).
\]
This proves \eqref{eq:twist-cancellation-intro} for every even \(N\).

If \(N\) is odd, then $\cos\frac{N\pi}{2}=0,$
so $\cos(N\vartheta)=\cos\kappa=0.$
Equation \eqref{eq:twisted-Chambers-Q} therefore gives $\chi_{N,\pi/2,\pi/2}^{i^N}(z)
 =
 Q_{N,\pi/2}(z),$
which is \eqref{eq:odd-central-Chambers-intro}. Together with
\eqref{eq:general-phase-product}, this completes the proof of
\Cref{thm:intro-arbitrary-phase}.

\subsection{Phase opening of the central polynomial}
\label{sub:phase-opening}

Because the central polynomial is independent of the potential phase, we may choose the phase so that one edge product vanishes.  The cyclic matrix then opens into a path, and the sign-change result from \Cref{thm:one-sign-change} applies directly.

\begin{lemma}
\label{lem:phase-opening}
Set
\[
 \vartheta_*:=\varphi+\frac{2\pi\delta_N}{N}-\pi,
 \qquad
 \delta_N=\frac{1-\varepsilon_N}{2}.
\]
For the edge products in \eqref{eq:general-phase-product}, one has
\begin{equation}
\label{eq:opened-edge-products}
 \rho_{0,N}(\varphi,\vartheta_*)=0,
 \qquad
 \rho_{j,N}(\varphi,\vartheta_*)
 =\e^{i\varphi}\sin\frac{\pi j}{N}
   \sin\left(\frac{\pi j}{N}-\varphi\right),
 \quad 1\le j<N.
\end{equation}
Moreover, \(Q_{N,\varphi}(z)\) is the characteristic polynomial of a
zero-diagonal tridiagonal path whose consecutive edge products are the
\(N-1\) numbers in \eqref{eq:opened-edge-products} with
\(1\le j<N\).  Zero products are allowed.
\end{lemma}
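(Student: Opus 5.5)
The proof has two parts. First, the closed form \eqref{eq:opened-edge-products} comes from a trigonometric computation. Second, the identification of $Q_{N,\varphi}$ comes from opening the cycle and invoking the Chambers decomposition \eqref{eq:twisted-Chambers-Q}.

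\textbf{The edge products.} Substitute $\vartheta_*$ into \eqref{eq:general-phase-product}. The argument of the second cosine becomes
\[
\frac{2\pi(j+\delta_N)}{N}-\vartheta_*=\frac{2\pi j}{N}-\varphi+\pi,
\]
so that
\[
\rho_{j,N}=\tfrac{\e^{i\varphi}}2\Bigl(\cos\varphi-\cos\bigl(\tfrac{2\pi j}{N}-\varphi\bigr)\Bigr).
\]
The sum-to-product formula $\cos A-\cos B=-2\sin\frac{A+B}2\sin\frac{A-B}2$ with $A=\varphi$ and $B=\frac{2\pi j}N-\varphi$ gives
\[
\rho_{j,N}=\e^{i\varphi}\sin\tfrac{\pi j}{N}\,\sin\bigl(\tfrac{\pi j}{N}-\varphi\bigr).
\]
This vanishes at $j=0$, which yields \eqref{eq:opened-edge-products}.

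\textbf{Identifying $Q_{N,\varphi}$.} The matrix $\widehat A_N(\varphi,\vartheta_*)$ is a zero-diagonal cyclic tridiagonal matrix on $\Z/N\Z$, obtained by a unitary change of basis. So its characteristic polynomial is $\chi^1_{N,\varphi,\vartheta_*}$.

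I expand the determinant of a zero-diagonal cyclic tridiagonal matrix the same way the twist dependence was extracted in \cref{sub:arbitrary}. The terms fall into two classes.
\begin{itemize}
\item Terms using both entries across the edge $0\leftrightarrow1$ (in the $\psi$-indexing) contribute only through their product $\rho_{0,N}$.
\item Terms using exactly one of them must traverse the whole cycle in one orientation. These give $(-1)^{N-1}\bigl(-\prod_k\alpha_{k,N}-\prod_k\beta_{k,N}\bigr)$.
\end{itemize}
Hence
\[
\chi(z)=p^{\mathrm{path}}(z)-\rho_{0,N}\,(\cdots)-\prod_k\alpha_{k,N}-\prod_k\beta_{k,N},
\]
where $p^{\mathrm{path}}$ is the characteristic polynomial of the path obtained by deleting the edge $0\leftrightarrow1$. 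The signs are chosen so that the product terms match the standard periodic Jacobi (Floquet) formula. At $\vartheta_*$ we have $\rho_{0,N}=0$, so the second term drops. The path on the vertices $1,2,\dots,N-1,0$ has consecutive edge products $\rho_1,\dots,\rho_{N-1}$, and by \eqref{eq:continuant} its characteristic polynomial depends only on these products.

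It remains to show that the cycle terms $\prod_k\alpha_{k,N}+\prod_k\beta_{k,N}$ equal the correction
\[
2^{1-N}\cos(N\vartheta_*)+2^{1-N}\e^{iN\varphi},
\]
so that \eqref{eq:twisted-Chambers-Q} with $\kappa=0$ gives $p^{\mathrm{path}}=Q_{N,\varphi}$. I compute the products directly:
\[
\prod_k\alpha_{k,N}=2^{-N}\prod_k\bigl(\e^{i\vartheta}+\e^{i\varphi}\omega^{\delta_N}\omega^k\bigr)=2^{-N}\bigl(\e^{iN\vartheta}-(-\e^{i\varphi}\omega^{\delta_N})^N\bigr),
\]
using $\prod_k(x-\omega^k y)=x^N-y^N$. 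The analogous formula holds for $\prod_k\beta_{k,N}$. Since $\omega^{N\delta_N}=(-1)^{1-\varepsilon_N}$, we get $(-\omega^{\delta_N})^N=(-1)^N(-1)^{1-\varepsilon_N}=-1$ for both parities of $N$. The sum therefore equals
\[
2^{-N}\bigl(\e^{iN\vartheta}+\e^{-iN\vartheta}+2\e^{iN\varphi}\bigr),
\]
which is the required correction.

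An alternative avoids the sign bookkeeping. Since $\rho_{0,N}=0$, replace the two entries across edge $0$ by zero, using the product dependence noted in the proof of \Cref{thm:one-sign-change}. This changes the determinant by exactly the two cycle terms, which are computed as above.

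\textbf{Main obstacle.} The hardest step is the sign and orientation bookkeeping in the cycle-term expansion, especially the parity factor $(-\omega^{\delta_N})^N=-1$. I would double-check it against the $N=5$, $\varphi=\pi/2$ example from the introduction.
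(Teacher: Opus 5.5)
This is essentially the paper's proof. You use the same substitution into \eqref{eq:general-phase-product}, the same permutation expansion in which $\rho_{0,N}=0$ kills the $0\leftrightarrow1$ transposition and leaves the path determinant plus two oriented cycles, the same evaluation of $\prod_k\alpha_{k,N}$ and $\prod_k\beta_{k,N}$, and the same comparison with \eqref{eq:twisted-Chambers-Q} at $\tau=1$.

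One sign needs fixing. Your displayed cycle contribution $-\prod_k\alpha_{k,N}-\prod_k\beta_{k,N}$ is correct for every $N$, because each cycle term is $(-1)^{N-1}\prod_k(-\alpha_{k,N})=-\prod_k\alpha_{k,N}$ (and likewise for $\beta$). Your bullet's expression $(-1)^{N-1}\bigl(-\prod_k\alpha_{k,N}-\prod_k\beta_{k,N}\bigr)$, however, has the wrong sign for even $N$. Your proposed $N=5$ check would not catch this, since $N=5$ is odd. Replace the appeal to the Floquet formula with that one-line computation.
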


\begin{proof}
Insert \(\vartheta_*\) into \eqref{eq:general-phase-product}.  Since
\[
 \frac{2\pi(k+\delta_N)}N-\vartheta_*
 =\frac{2\pi k}{N}-\varphi+\pi,
\]
we obtain
\begin{align*}
 \rho_{k,N}(\varphi,\vartheta_*)
 &=\frac{\e^{i\varphi}}2
   \left(\cos\varphi-\cos\left(\frac{2\pi k}{N}-\varphi\right)\right)=\e^{i\varphi}\sin\frac{\pi k}{N}
   \sin\left(\frac{\pi k}{N}-\varphi\right).
\end{align*}
This gives \eqref{eq:opened-edge-products}, including the zero product at
\(k=0\).

It remains to identify the resulting path polynomial with
\(Q_{N,\varphi}\).  Write
\(L=\widehat A_N(\varphi,\vartheta_*)\), with the coefficients
\(\alpha_{k,N},\beta_{k,N}\) from \Cref{sub:basis}.  Expand \(\det(zI-L)\) by permutations.  Since \(L\) has zero diagonal and
only nearest-neighbour entries, a nonzero permutation consists either of
fixed points and disjoint transpositions \(k\leftrightarrow k+1\), or of one
of the two permutations that traverse the whole cycle.  A fixed point
contributes \(z\), while a transposition across the edge
\(k\leftrightarrow k+1\) contributes
\[
 -\alpha_{k,N}\beta_{k+1,N}=-\rho_{k,N}.
\]
Since \(\rho_{0,N}=0\), every term containing the transposition
\(0\leftrightarrow1\) vanishes.  The remaining fixed-point and
transposition terms are therefore exactly the determinant expansion of the
path obtained by deleting this edge, whose vertices may be ordered as
\[
 1,2,\ldots,N-1,0
\]
and whose edge products are
\(\rho_{1,N},\ldots,\rho_{N-1,N}\).  If \(H(z)\) denotes its
characteristic polynomial, the only additional terms in the cyclic
determinant are the two oriented cycles.  Hence
\[
 \det(zI-L)
 =
 H(z)
 -\prod_{k=0}^{N-1}\alpha_{k,N}
 -\prod_{k=0}^{N-1}\beta_{k,N}.
\]

Using \(\prod_{k=0}^{N-1}(x+y\omega^k)=x^N-(-y)^N\), one finds, for any
real \(\vartheta\),
\[
 \prod_{k=0}^{N-1}\alpha_{k,N}
 =2^{-N}\bigl(\e^{iN\vartheta}+\e^{iN\varphi}\bigr),
 \qquad
 \prod_{k=0}^{N-1}\beta_{k,N}
 =2^{-N}\bigl(\e^{-iN\vartheta}+\e^{iN\varphi}\bigr).
\]
At \(\vartheta=\vartheta_*\), their sum is
\[
 2^{1-N}\cos(N\vartheta_*)+2^{1-N}\e^{iN\varphi}.
\]
On the other hand, \eqref{eq:twisted-Chambers-Q} with \(\tau=1\) gives
\[
 \det(zI-L)
 =Q_{N,\varphi}(z)
 -2^{1-N}\cos(N\vartheta_*)
 -2^{1-N}\e^{iN\varphi}.
\]
Comparing this identity with our formula above yields
\(H(z)=Q_{N,\varphi}(z)\).  This proves the lemma.
\end{proof}

\begin{proof}[Proof of \Cref{thm:intro-central-cross}]
By \Cref{lem:phase-opening}, \(Q_{N,\varphi}\) is the characteristic
polynomial of a zero-diagonal path with edge products
\[
 \e^{i\varphi}\sin\frac{\pi j}{N}
 \sin\left(\frac{\pi j}{N}-\varphi\right),
 \qquad 1\le j<N.
\]
The first sine is positive.  As \(j\) increases from \(1\) to \(N-1\), the
arguments \(\pi j/N-\varphi\) fill an interval of length strictly less than
\(\pi\).  Since consecutive zeros of the sine function are \(\pi\) apart,
the second sine changes sign at most once after zero terms are removed.
Therefore \Cref{thm:one-sign-change}, with \(\alpha=\varphi/2\), gives
\begin{equation*}
 Q_{N,\varphi}(z)=0
 \quad\Longrightarrow\quad
 z\in\cX_{\varphi/2}.
\end{equation*}

A zero-diagonal path of size \(N\) has an even characteristic polynomial
when \(N\) is even and an odd one when \(N\) is odd.  Hence
\[
 Q_{N,\varphi}(z)=z^{\varepsilon_N}P_{N,\varphi}(z^2)
\]
for a monic polynomial \(P_{N,\varphi}\) of degree \(\ell_N\).  Define
\[
 R_{N,\varphi}(t)
 :=\e^{-i\ell_N\varphi}
 P_{N,\varphi}(\e^{i\varphi}t).
\]
If \(z\) is a zero of \(Q_{N,\varphi}\), then
\(t=\e^{-i\varphi}z^2\in\R\) by the two-line containment just proved.
Thus all zeros of the monic polynomial \(R_{N,\varphi}\) are real, so
\(R_{N,\varphi}\in\R[t]\).  This proves
\eqref{eq:central-real-rooted-form}.

If \(\tau=\e^{i\kappa}\) satisfies
\eqref{eq:central-fibre-condition}, then
\eqref{eq:twisted-Chambers-Q} immediately gives
\[
 \chi_{N,\varphi,\vartheta}^{\tau}(z)=Q_{N,\varphi}(z),
\]
so the same two-line containment holds for that fibre.  In particular, the
choice
\[
 \vartheta=\frac{\pi}{2N},\qquad \tau=i
\]
always satisfies the condition, and therefore
\[
 Q_{N,\varphi}(z)
 =\det\!\left(
 zI-A_N^i\!\left(\varphi,\frac{\pi}{2N}\right)
 \right).
\]

It remains only to bound the roots along the two lines.  Let \(x\) be a
normalized eigenvector of this matrix with eigenvalue \(z\), and set
\[
 a:=x^*D_N\!\left(\frac{\pi}{2N}\right)x,
 \qquad
 b:=x^*\frac{S_i+S_i^{-1}}2x.
\]
Both \(a\) and \(b\) are real and lie in \([-1,1]\).  The eigenvalue
identity gives
\[
 \e^{-i\varphi/2}z
 =\cos\frac{\varphi}{2}(a+b)
 +i\sin\frac{\varphi}{2}(b-a).
\]
If \(z\in\e^{i\varphi/2}\R\), the left side is real.  When
\(\sin(\varphi/2)\ne0\), this forces \(a=b\), and hence
\(|z|\le2|\cos(\varphi/2)|\); when \(\sin(\varphi/2)=0\), the same bound
follows directly from \(|a+b|\le2\).  Similarly, if
\(z\in\e^{i(\varphi/2+\pi/2)}\R\), then the left side is purely imaginary.
This gives \(a=-b\) when \(\cos(\varphi/2)\ne0\), and otherwise the bound is
direct; in both cases
\[
 |z|\le2\left|\sin\frac{\varphi}{2}\right|.
\]
Finally, \(t=\e^{-i\varphi}z^2\) is nonnegative on the first line and
nonpositive on the second.  Squaring the two bounds gives the stated
interval for the zeros of \(R_{N,\varphi}\).
\end{proof}

\section{The \texorpdfstring{periodic case \(\varphi=\pi/2\)}{periodic pi/2 case} and the Scottish flag matrix}
\label{sec:scottish}

At \(\varphi=\pi/2\), we have that
\eqref{eq:general-phase-product} satisfy, for $N=2M$ for some positive integer $M$,
\begin{equation}
\label{eq:scottish-products}
 \rho_{k,N}=\frac i2\cos\!\left(\frac{(2k+1)\pi}{N}\right),
 \qquad
 \rho_{M-1-k,N}=-\rho_{k,N},
\end{equation}
because
\(\frac{(2(M-1-k)+1)\pi}{N}
=\pi-\frac{(2k+1)\pi}{N}\).

Throughout this section, \(J_\ell(0)\) denotes the \(\ell\times\ell\) nilpotent
Jordan block, and for an \(N\times N\) matrix \(X\) we set
\(\cG_0(X):=\ker X^N\).

\subsection{Reversal identity for products
\texorpdfstring{\(\rho_j\)}{rho-j}}
\label{sub:mirrored}

\begin{theorem}\label{thm:mirrored-path}
Let \(n=2s+1\) and let \(T\) be a zero-diagonal tridiagonal matrix as in
\eqref{eq:path-matrix} whose products \(\rho_j\) satisfy the following reversal identity: For some
\(\theta\in\R\) and positive numbers \(a_1,\ldots,a_s\), reversing their
order flips every sign:
\begin{equation}
\label{eq:mirrored-rho}
 \rho_j=\e^{2i\theta}a_j,
 \qquad
 \rho_{2s+1-j}=-\e^{2i\theta}a_j,
 \qquad 1\le j\le s.
\end{equation}
Let \(K\) be the matrix of \Cref{prop:odd-even} whose off-diagonal
entries avoid the sign change, that is,
\begin{equation*}
 K=\begin{cases}
 B^TB,&s\ \text{odd},\\
 BB^T,&s\ \text{even},
 \end{cases}
\end{equation*}
and write
\begin{equation}
\label{eq:K-H}
 K=\e^{2i\theta}H,
 \qquad H=H^T\in\R^{d\times d},
 \qquad d=\begin{cases}s,&s\ \text{odd},\\s+1,&s\ \text{even}.
 \end{cases}
\end{equation}
Then \(H\) is a real symmetric tridiagonal matrix and every off-diagonal entry is nonzero.  Its
dimension \(d\) is odd; write \(d=2m+1\).  Its spectrum is simple and has the
form
\begin{equation*}
 \spec(H)=\{0,\pm\nu_1,\ldots,\pm\nu_m\},
 \qquad 0<\nu_1<\cdots<\nu_m.
\end{equation*}
If \(s=2m\), then
\begin{equation}
\label{eq:mirror-factor-even}
 \det(zI-T)=z\prod_{r=1}^{m}(z^4-\e^{4i\theta}\nu_r^2).
\end{equation}
If \(s=2m+1\), then
\begin{equation}
\label{eq:mirror-factor-odd}
 \det(zI-T)=z^3\prod_{r=1}^{m}(z^4-\e^{4i\theta}\nu_r^2).
\end{equation}
Consequently,
\begin{equation}
\label{eq:mirror-cross-conclusion}
 \spec(T)\subset\cX_\theta.
\end{equation}
The Jordan block at zero is \(J_1(0)\) in
\eqref{eq:mirror-factor-even} and \(J_3(0)\) in
\eqref{eq:mirror-factor-odd}.
\end{theorem}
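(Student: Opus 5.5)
The plan is to apply \Cref{prop:odd-even} with a specific choice of square roots that makes the reversal symmetry visible. Then I show that $H$ anticommutes with the flip of its index set, and read off the factorizations from \eqref{eq:Schur}.

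\emph{Setup.} Choose $w_j=\e^{i\theta}\sqrt{a_j}$ and $w_{2s+1-j}=i\,\e^{i\theta}\sqrt{a_j}$ for $1\le j\le s$. Then $w_j^2=\rho_j$ by \eqref{eq:mirrored-rho}, and $w_{2s+1-j}=i\,w_j$. Every $\rho_j$ is nonzero, so \Cref{lem:diagonal-similarity} and \Cref{prop:odd-even} apply. The unique sign change sits across edges $s$ and $s+1$, so the only product $w_jw_{j+1}$ outside $\e^{2i\theta}\R$ is $w_sw_{s+1}$. By \eqref{eq:gram-entries}, this product lies in $B^TB$ when $s$ is even and in $BB^T$ when $s$ is odd. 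This explains the choice of $K$ in the statement.

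\emph{The matrix $H$.} In $K$, every diagonal entry is a sum $w_j^2+w_{j+1}^2\in\e^{2i\theta}\R$, and every off-diagonal entry is a product $w_jw_{j+1}\in\e^{2i\theta}\R$ with both edges on the same side of the sign change. Each such product is $\pm\e^{2i\theta}\sqrt{a_ja_{j+1}}\neq0$. Hence $H=\e^{-2i\theta}K$ is real symmetric tridiagonal with nonzero off-diagonal entries, so its spectrum is simple. The dimension count $d=s$ for odd $s$, or $d=s+1$ for even $s$, shows that $d$ is odd.

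\emph{Key step: symmetry of $\spec(H)$.} The reversal $j\mapsto 2s+1-j$ of edges has the following effects.
\begin{itemize}
\item It maps the pair of edges $(j,j+1)$ meeting at a vertex to the pair $(2s-j,2s+1-j)$, which has the same parity type. Hence it induces the flip $F$ of the index set of $K$.
\item It multiplies each off-diagonal entry by $i^2=-1$, since $w_{2s+1-j}w_{2s-j}=i^2w_jw_{j+1}$.
\item It sends each diagonal entry $w_j^2+w_{j+1}^2$ to $-(w_j^2+w_{j+1}^2)$.
\end{itemize}
Together these give $FHF=-H$, so the spectrum of $H$ is symmetric about $0$. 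The central diagonal entry is $w_s^2+w_{s+1}^2=\e^{2i\theta}(a_s-a_s)=0$, which is consistent with this. Since $d=2m+1$ is odd and the spectrum is simple, it must be $\{0,\pm\nu_1,\ldots,\pm\nu_m\}$ with $0<\nu_1<\cdots<\nu_m$. The main care needed in the whole proof is the index bookkeeping here: checking that $F$ really maps the edge pairs occurring in $K$ onto one another.

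\emph{Factorization.} By \eqref{eq:Schur} with $n-2s=1$, we have $\det(zI-T)=z\det(z^2I_s-B^TB)$.
\begin{itemize}
\item If $s$ is odd, then $K=B^TB$ and
\[
\det(tI-K)=t\prod_{r}\bigl(t^2-\e^{4i\theta}\nu_r^2\bigr).
\]
Substituting $t=z^2$ gives \eqref{eq:mirror-factor-odd}; here $s=2m+1$.
\item If $s$ is even, then $K=BB^T$, and the second identity of \eqref{eq:Schur} gives $\det(tI_s-B^TB)=\det(tI-K)/t=\prod_r(t^2-\e^{4i\theta}\nu_r^2)$. This yields \eqref{eq:mirror-factor-even}.
\end{itemize}

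\emph{Consequences.} A root of $z^4=\e^{4i\theta}\nu_r^2$ satisfies $z^2=\pm\e^{2i\theta}\nu_r$, so $z\in\cX_\theta$; this proves \eqref{eq:mirror-cross-conclusion}. For the Jordan structure, all $u_j,\ell_j$ are nonzero because $\rho_j\ne0$. Hence the recurrence $Tx=0$ determines $x$ from its first coordinate, and $\ker T$ is at most one-dimensional. Thus the zero eigenvalue, of algebraic multiplicity $1$ or $3$, forms a single Jordan block $J_1(0)$ or $J_3(0)$.
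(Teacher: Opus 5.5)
Your proposal is correct and follows essentially the same route as the paper's proof in Appendix A. The shared steps are: the square roots \(w_{2s+1-j}=i\,w_j\); selecting the Gram matrix that avoids \(w_sw_{s+1}\); the anti-symmetry of \(H\) under coordinate reversal, which yields \(\{0,\pm\nu_r\}\) for odd \(d\); the two identities in \eqref{eq:Schur} for the factorizations; and the recurrence argument giving a one-dimensional kernel, hence a single zero Jordan block. Your explicit check that reversing edge indices preserves the parity type of consecutive pairs, so that the flip acts within the index set of \(K\), is a detail the paper leaves implicit.
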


The proof is given in \Cref{app:mirrored}.  

\begin{corollary}
\label{cor:cyclic-cross}
Let \(M\geq 2\), and let \(L\) be a zero-diagonal cyclic tridiagonal
matrix of dimension \(2M\), indexed by \(\mathbb Z/(2M)\mathbb Z\).
Assume that \(L\) commutes with the reflection $\mathcal Re_k=e_{-k}.$
Write $\rho_k=L_{k,k+1}L_{k+1,k}$
for the product associated with the edge \(k\leftrightarrow k+1\).

Consider the \(M\) edges on the half-cycle from the fixed vertex \(0\)
to the fixed vertex \(M\), namely
\[
    \rho_0,\rho_1,\ldots,\rho_{M-1}.
\]
Suppose that, for some \(\theta\in\mathbb R\),
\[
    \rho_k=e^{2i\theta}c_k,
    \qquad 0\leq k\leq M-1,
\]
where \(c_k\in\mathbb R\) satisfy
\[
    c_{M-1-k}=-c_k
\]
and
\[
    c_k>0
    \qquad\text{for }0\leq k<\frac{M-1}{2}.
\]
If \(M\) is odd, the reversal symmetry forces
\[
    c_{(M-1)/2}=0;
\]
apart from this forced zero, assume that all \(c_k\) are nonzero.
Then
\[
    \spec(L)\subset\cX_\theta.
\]
\end{corollary}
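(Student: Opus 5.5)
Since $\mathcal R$ commutes with $L$, the reflection reduction from \Cref{sub:aligned} applies: $L\simeq L_+\oplus L_-$, where $L_\pm$ are zero-diagonal paths. By \eqref{eq:fold-plus-products}--\eqref{eq:fold-minus-products}, their edge-product lists are
\[
 L_+:\ 2\rho_0,\rho_1,\ldots,\rho_{M-2},2\rho_{M-1},
 \qquad
 L_-:\ \rho_1,\ldots,\rho_{M-2}.
\]
Hence $\spec(L)=\spec(L_+)\cup\spec(L_-)$, and it suffices to show that each of the two spectra lies in $\cX_\theta$.

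\emph{Easy route.} First check whether \Cref{thm:one-sign-change} already does the job. The real factors are $c_k$ in the interior and $2c_0$, $2c_{M-1}$ at the two ends. By hypothesis, $c_k>0$ for $k<(M-1)/2$. Antisymmetry gives $c_k<0$ for $k>(M-1)/2$. When $M$ is odd, the middle factor vanishes and all other factors are nonzero. So on each path the sign sequence is
\[
 +,\ldots,+,\;(0),\;-,\ldots,-.
\]
For $M$ odd, the zero product cuts the path into two blocks of constant sign, so \Cref{lem:diagonal-similarity} places each block on a single line of $\cX_\theta$. For $M$ even, there is exactly one sign change on a block of nonzero products. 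In both cases the hypothesis of \Cref{thm:one-sign-change} holds with $\alpha=\theta$ for both $L_+$ and $L_-$, and the containment follows.

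\emph{Alternative route.} One could instead invoke \Cref{thm:mirrored-path}. For $L_-$ with $M$ odd, the list $\rho_1,\ldots,\rho_{M-2}$ has $M-2$ entries. This is an even number, whereas \Cref{thm:mirrored-path} is stated for paths of odd dimension $n=2s+1$ with $2s$ products. The boundary doubling in $L_+$ preserves reversal antisymmetry, since both ends are doubled. This route would additionally give the Jordan structure and the quartic factorization, but those are not needed for the containment. I will therefore use \Cref{thm:one-sign-change} as the main argument and mention \Cref{thm:mirrored-path} only as a refinement.

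\emph{Degenerate cases.} The main thing to check is small $M$, where the lists above may be empty or consist only of zero products.
\begin{itemize}
 \item For $M=2$: $L_-$ has no edges, so it is the $1\times1$ zero matrix with spectrum $\{0\}\subset\cX_\theta$. $L_+$ has products $2\rho_0,2\rho_1$ of opposite signs, which is one sign change.
 \item For $M=3$: $L_-$ has the single product $\rho_1=0$, so its spectrum is $\{0\}$.
\end{itemize}
Both cases are covered by the convention in the proof of \Cref{thm:one-sign-change}, which deletes zero edges and treats the remaining components separately. Isolated vertices contribute only the eigenvalue $0$, which lies in $\cX_\theta$. So no genuine obstacle remains beyond this bookkeeping.
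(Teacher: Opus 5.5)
Your argument is correct and follows the paper's proof: you split $L=L_+\oplus L_-$ via the reflection, read off the folded edge-product lists, observe that the endpoint factors $2$ and passing to a sublist create no extra sign change, and apply \Cref{thm:one-sign-change} with $\alpha=\theta$ to each block. One slip, confined to your side remark: for odd $M$ the list $\rho_1,\ldots,\rho_{M-2}$ has an odd number of entries (it is the dimension $M-1$ that is even), and \Cref{thm:mirrored-path} is ruled out there anyway by the forced zero product, so the main route is unaffected.
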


\begin{proof}
The reversal \(k\mapsto M-1-k\) acts on the \(M\) edge indices of the
half-cycle. Hence the assumptions imply that the sequence
\[
    c_0,c_1,\ldots,c_{M-1}
\]
is positive on its first half and negative on its second half. If
\(M\) is odd, its middle term is zero. Thus, after removing this
possible zero, the signs change at most once.

Since \(L\) commutes with \(\mathcal R\), the reflection decomposition from
Section~2.3 gives
\[
    L=L_+\oplus L_-.
\]
The edge-product lists of the two tridiagonal path blocks are
\[
    L_+:\quad
    2\rho_0,\rho_1,\ldots,\rho_{M-2},2\rho_{M-1},
\]
and
\[
    L_-:\quad
    \rho_1,\ldots,\rho_{M-2}.
\]
The factors \(2\) at the endpoints are positive and therefore do not
change any signs. Passing to a sublist cannot create an additional
sign change. Consequently, on every maximal block of nonzero edge
products, both \(L_+\) and \(L_-\) have at most one sign change.

Moreover, every edge product has the form
\[
    \rho_k=e^{2i\theta}c_k
    \qquad (c_k\in\mathbb R).
\]
Theorem~2.3 therefore applies separately to \(L_+\) and \(L_-\), and
gives
\[
    \spec(L_+)\subset\cX_\theta,
    \qquad
    \spec(L_-)\subset\cX_\theta.
\]
Since \(L=L_+\oplus L_-\),
\[
    \spec(L)
    =\spec(L_+)\cup\spec(L_-)
    \subset\cX_\theta.
\]
\end{proof}

\begin{proposition}
\label{prop:central-scottish-factorization}
For \(N\ge3\), set $m_N:=\left\lfloor\frac N4\right\rfloor,
 r_N:=N-4m_N=N \pmod 4\in\{0,1,2,3\}.$
There are pairwise distinct numbers
\(\eta_{1,N},\ldots,\eta_{m_N,N}>0\) such that
\begin{equation}
\label{eq:central-scottish-factorization}
 Q_{N,\pi/2}(z)
 =z^{r_N}\prod_{j=1}^{m_N}(z^4+\eta_{j,N}).
\end{equation}
In particular, every nonzero root of $Q_{N,\pi/2}$ is simple and
\[
 \{z:Q_{N,\pi/2}(z)=0\}\subset\cX_{\pi/4}.
\]
\end{proposition}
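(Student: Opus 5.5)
The plan is to work entirely from the path realization of \(Q_{N,\pi/2}\) given by \Cref{lem:phase-opening}. At \(\varphi=\pi/2\), the edge products \eqref{eq:opened-edge-products} become
\[
 \rho_j=i\sin\frac{\pi j}{N}\,\sin\!\Big(\frac{\pi j}{N}-\frac{\pi}{2}\Big)
 =-\frac{i}{2}\sin\frac{2\pi j}{N},\qquad 1\le j\le N-1 .
\]
Write \(-i=\e^{2i\theta}\) with \(\theta=-\pi/4\), so that \(\e^{4i\theta}=-1\). Set \(a_j:=\tfrac12\sin(2\pi j/N)\), which is positive for \(1\le j<N/2\). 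Then \(\rho_j=\e^{2i\theta}a_j\) and \(\rho_{N-j}=-\e^{2i\theta}a_j\). This is exactly the reversal identity \eqref{eq:mirrored-rho}, and it forces \(\rho_{N/2}=0\) when \(N\) is even. I split into cases by the parity of \(N\).

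\emph{Odd \(N=2s+1\).} The path has \(n=N\) vertices, and the hypotheses of \Cref{thm:mirrored-path} hold verbatim, since \(a_1,\dots,a_s>0\). That theorem gives \(\det(zI-T)=z^{\varepsilon}\prod_{r=1}^{m}(z^4-\e^{4i\theta}\nu_r^2)=z^{\varepsilon}\prod_r(z^4+\nu_r^2)\), with \(0<\nu_1<\dots<\nu_m\) and \(\varepsilon\in\{1,3\}\).
\begin{itemize}
\item If \(N=4m+1\), then \(s=2m\) and the prefactor is \(z\).
\item If \(N=4m+3\), then \(s=2m+1\) and the prefactor is \(z^3\).
\end{itemize}
In both cases \(r_N=\varepsilon\) and \(m=m_N\). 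Setting \(\eta_{j,N}:=\nu_j^2\) gives positive, pairwise distinct numbers.

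\emph{Even \(N=2n\).} Since \(\rho_n=0\), the continuant expansion (as in the proof of \Cref{thm:one-sign-change}) factors \(Q_{N,\pi/2}=p\,\tilde p\). Here \(p\) is the characteristic polynomial of the \(n\)-vertex path with products \(\e^{2i\theta}a_1,\dots,\e^{2i\theta}a_{n-1}\). The polynomial \(\tilde p\) comes from the path with products \(\rho_{n+1},\dots,\rho_{N-1}\), which equal \(-\e^{2i\theta}a_{n-1},\dots,-\e^{2i\theta}a_1\). Reversing the vertex order is a permutation similarity, so it does not change the characteristic polynomial. Replacing \(T\) by \(iT\) negates every product. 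Hence \(\tilde p(z)=\det(zI-iT)=i^n p(-iz)\), where \(T\) is the first path.

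By \Cref{lem:diagonal-similarity}, the roots of \(p\) are simple and equal to \(\e^{i\theta}\lambda\) with \(\lambda\) ranging over the spectrum of a real symmetric zero-diagonal Jacobi matrix. That spectrum is symmetric about \(0\), because \(p\) has the parity of \(n\). Hence
\[
p(z)=z^{\varepsilon}\prod_k\big(z^2+i\lambda_k^2\big),
\]
with distinct \(\lambda_k>0\) and \(\varepsilon=n\bmod 2\). Substituting \(z\mapsto -iz\) turns each factor into \(-z^2+i\lambda_k^2\). Combining the two factors gives \((z^2+i\lambda_k^2)(-z^2+i\lambda_k^2)=-(z^4+\lambda_k^4)\). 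Since \(Q_{N,\pi/2}\) is monic, the scalar prefactors must combine to \(1\), so
\[
Q_{N,\pi/2}(z)=z^{2\varepsilon}\prod_k\big(z^4+\lambda_k^4\big).
\]
\begin{itemize}
\item If \(4\mid N\), then \(n\) is even, the exponent \(2\varepsilon\) is \(0\), and there are \(n/2=m_N\) factors.
\item If \(N\equiv2\pmod 4\), the prefactor is \(z^2\) and there are \((n-1)/2=m_N\) factors.
\end{itemize}
We take \(\eta_{j,N}:=\lambda_j^4\), which are distinct and positive.

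Simplicity of the nonzero roots follows because the \(\eta_{j,N}\) are distinct and positive, and each \(z^4+\eta\) has four distinct roots. Those roots are \(\eta^{1/4}\e^{i(\pi/4+k\pi/2)}\), which lie in \(\cX_{\pi/4}\).

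The main obstacle is bookkeeping rather than analysis. In the odd case, one must check that the ordering and indexing of the opened path match \eqref{eq:mirrored-rho}, including that \(n=2s+1\) and that the relevant \(a_j\) are positive. In the even case, one must track the parity of \(n\) and the constants \(i^n\) and \((-1)^{\#}\) carefully enough to see that the exponent of \(z\) is \(r_N\); monicity then settles the scalar. The small case \(N=4\), where \(n=2\) and each half-path has a single edge, should be checked separately.
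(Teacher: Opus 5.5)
Your proof is correct and follows the paper's argument essentially step for step. In the odd case you apply \Cref{thm:mirrored-path} to the opened path with \(\theta=-\pi/4\). In the even case you cut at the vanishing middle edge \(\rho_{N/2}=0\) and identify the second half-path, via reversal and \(T\mapsto iT\), as the \(\e^{i\pi/4}\)-rotated copy of the first, which gives \(Q_{N,\pi/2}(z)=\det(zI-\e^{-i\pi/4}J)\det(zI-\e^{i\pi/4}J)\); your explicit tracking of the scalar prefactor, which monicity also settles, just spells out what the paper writes directly.
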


\begin{proof}
At \(\varphi=\pi/2\), the path in \Cref{lem:phase-opening} has edge
products
\begin{equation}
\label{eq:sine-cycle-weights}
 \rho_{j,N}^{\circ}
 =-\frac{i}{2}\sin\frac{2\pi j}{N},
 \qquad 1\le j<N.
\end{equation}
Suppose first that \(N=2s+1\) is odd.  For \(1\le j\le s\), put
\[
 a_{j,N}:=\frac12\sin\frac{2\pi j}{N}>0.
\]
Then
\(\rho_{j,N}^{\circ}=-ia_{j,N}\) and
\(\rho_{N-j,N}^{\circ}=ia_{j,N}\).  Thus the path satisfies
\eqref{eq:mirrored-rho} with \(\theta=-\pi/4\).  If \(N=4m+1\),
\eqref{eq:mirror-factor-even} gives
\[
 Q_{N,\pi/2}(z)
 =z\prod_{j=1}^{m}(z^4+\nu_{j,N}^2),
\]
whereas, if \(N=4m+3\), \eqref{eq:mirror-factor-odd} gives
\[
 Q_{N,\pi/2}(z)
 =z^3\prod_{j=1}^{m}(z^4+\nu_{j,N}^2).
\]
The positive \(\nu_{j,N}\) are pairwise distinct by
\Cref{thm:mirrored-path}.

Now let \(N=2M\), and define as above
\[
 a_{j,N}:=\frac12\sin\frac{2\pi j}{N}>0,
 \qquad 1\le j<M.
\]
The middle edge in \eqref{eq:sine-cycle-weights} also vanishes, so the
opened path splits into two paths of dimension \(M\).  Let \(J_M\) be
the real symmetric zero-diagonal Jacobi matrix with off-diagonal entries
\(\sqrt{a_{1,N}},\ldots,\sqrt{a_{M-1,N}}\).
The two components have product lists
\(-ia_{1,N},\ldots,-ia_{M-1,N}\) and their \(+i\)-phase reversal.  Hence
\begin{equation}
\label{eq:even-central-split}
 Q_{N,\pi/2}(z)
 =\det(zI-\e^{-i\pi/4}J_M)
  \det(zI-\e^{i\pi/4}J_M).
\end{equation}
The spectrum of \(J_M\) is simple and symmetric.  Writing its positive
eigenvalues as
\(0<\lambda_{1,N}<\cdots<\lambda_{\lfloor M/2\rfloor,N}\), and including
its single zero when \(M\) is odd, gives
\[
 Q_{N,\pi/2}(z)
 =
 \begin{cases}
 \displaystyle\prod_{j=1}^{m}(z^4+\lambda_{j,N}^4),
 &N=4m,\\[5pt]
 \displaystyle z^2\prod_{j=1}^{m}(z^4+\lambda_{j,N}^4),
 &N=4m+2.
 \end{cases}
\]
Combining the cases proves the factorization, with
\(\eta_{j,N}=\nu_{j,N}^2\) in odd dimension and
\(\eta_{j,N}=\lambda_{j,N}^4\) in even dimension.  Distinctness proves
simplicity, and \(z^4=-\eta_{j,N}<0\) gives the cross containment.
\end{proof}

\subsection{The \texorpdfstring{periodic \(\varphi=\pi/2\)}{periodic pi/2} matrix}
\label{sub:scottish-periodic}

Within this subsection, abbreviate the periodic matrix by
\[
 A_N:=A_N(\pi/2,0)=D_N(0)+i C_N.
\]

\begin{theorem}\label{thm:intro-scottish}
For every even \(N\ge4\),
\begin{equation}
\label{eq:intro-two diagonal lines}
 \spec(A_N)\subset\cX_{\pi/4}
 =\e^{\pi i/4}\R\cup\e^{-\pi i/4}\R.
\end{equation}
If \(N=4m\), then there are numbers \(\tau_{r,N}>0\) for
\(1\le r\le m-1\) such that
\begin{align*}
 A_N|_{\cG_0(A_N)}&\sim J_3(0)\oplus J_1(0) \text{ and }
 \det(zI-A_N)=z^4\prod_{r=1}^{m-1}(z^4+\tau_{r,N}),
 \qquad \tau_{r,N}>0.
\end{align*}
If \(N=4m+2\), then there are numbers \(\tau_{r,N}>0\) for
\(1\le r\le m\) such that
\begin{align*}
 A_N|_{\cG_0(A_N)}&\sim J_2(0) \text{ and }
 \det(zI-A_N)=z^2\prod_{r=1}^{m}(z^4+\tau_{r,N}),
 \qquad \tau_{r,N}>0.
\end{align*}
\end{theorem}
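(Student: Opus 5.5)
The plan is to use the reflection decomposition already used in the proof of \Cref{thm:intro-unit-modulus}. Write \(N=2M\) and \(L=\widehat A_N(\pi/2,0)\), so that \(L=L_+\oplus L_-\). By \eqref{eq:scottish-products} the half-cycle products are \(\rho_k=\tfrac i2\cos((2k+1)\pi/N)=\e^{2i\pi/4}c_k\). These satisfy \(c_{M-1-k}=-c_k\), and \(c_k>0\) for \(k<(M-1)/2\). The only possible zero is the forced middle one when \(M\) is odd, i.e.\ \(N\equiv2\pmod4\). So \Cref{cor:cyclic-cross} with \(\theta=\pi/4\) gives \eqref{eq:intro-two diagonal lines}.

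For the factorization, I will treat each block \(L_\pm\) separately.

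\emph{Case \(N=4m\), so \(M=2m\).} The block \(L_+\) has \(M+1=2m+1\) vertices, and its product list \(2\rho_0,\rho_1,\ldots,\rho_{M-2},2\rho_{M-1}\) has length \(2m\). This list is reversal-antisymmetric, and all its entries are nonzero. Hence \Cref{thm:mirrored-path} applies with \(s=m\) and \(\theta=\pi/4\), so \(\e^{4i\theta}=-1\). Depending on the parity of \(m\), it gives \(z\prod(z^4+\nu^2)\) or \(z^3\prod(z^4+\nu^2)\), with Jordan block \(J_1(0)\) or \(J_3(0)\).

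The block \(L_-\) has \(M-1=2m-1\) vertices and products \(\rho_1,\ldots,\rho_{M-2}\), again antisymmetric under reversal. So \Cref{thm:mirrored-path} applies with \(s=m-1\), giving the complementary factor \(z^3\) or \(z\). In both parities the total is \(z^4\prod_{r=1}^{m-1}(z^4+\tau_{r,N})\), where the degree count \(4+4(m-1)=N\) fixes the number of quartic factors. The nilpotent part is \(J_3(0)\oplus J_1(0)\), as the two blocks contribute one Jordan block each.

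\emph{Case \(N=4m+2\), so \(M=2m+1\).} Now the middle product \(\rho_m\) vanishes, and each block splits into two paths. The block \(L_+\) splits into the paths \(2\rho_0,\rho_1,\ldots,\rho_{m-1}\) and its negated reversal, each with \(m+1\) vertices. The block \(L_-\) splits into \(\rho_1,\ldots,\rho_{m-1}\) and its mirror, each with \(m\) vertices. Each piece has products of constant sign. By \Cref{lem:diagonal-similarity} a piece is \(\e^{\pm i\pi/4}J\) with \(J\) a real symmetric zero-diagonal Jacobi matrix. Its characteristic polynomial is \(\det(zI-\e^{i\pi/4}J)\det(zI-\e^{-i\pi/4}J)\), and since the spectrum of \(J\) is simple and symmetric, this equals \(\prod(z^4+\lambda^4)\), times \(z^2\) when the dimension of \(J\) is odd. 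Exactly one of \(m\), \(m+1\) is odd, so the total factor at zero is \(z^2\), and the degree count again gives \(m\) quartic factors.

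The main obstacle is the Jordan structure at zero for \(N\equiv2\pmod4\). The block of odd dimension contributes two zero eigenvalues, coming from \(\e^{i\pi/4}J\) and \(\e^{-i\pi/4}J\). This is the block with dimension \(2(m+1)\) or \(2m\), whichever has odd half. These two zero eigenvalues are glued through the vanishing middle product \(\rho_m=u_m\ell_m\), where only one of \(u_m\), \(\ell_m\) is zero while the other is nonzero. I would compute \(\ker L\) directly to show it is one-dimensional. Each half-path has its own null vector, supported on alternating sites and nonvanishing at the endpoint adjacent to the middle edge. The nonzero coupling maps one half's null vector onto the other half's endpoint, so only one combination survives in the kernel. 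A one-dimensional kernel with algebraic multiplicity two gives \(J_2(0)\).

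For \(N=4m\), by contrast, each block is a single path, so the Jordan form at zero is read off directly from \Cref{thm:mirrored-path}.
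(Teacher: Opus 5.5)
Your proposal is correct. For the cross containment and for the zero structure it follows the paper: \Cref{cor:cyclic-cross} with \(\theta=\pi/4\), then \Cref{thm:mirrored-path} on the two reflection sectors when \(N=4m\) (parameters \(s_+=m\), \(s_-=m-1\), one of which is odd), and the one-directional central coupling when \(N=4m+2\), as in \Cref{app:scottish-zero}.

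Where you genuinely differ is in how the quartic factors are obtained. The paper gets them globally. With \(F\) the discrete Fourier transform and \(Q=\diag((-1)^j)\), one has \((QF)A_N(QF)^*=iA_N\). So nonzero eigenvalues occur in quartets \(\pm\lambda,\pm i\lambda\) with algebraic multiplicity. Since \(\lambda\in\cX_{\pi/4}\) forces \(\lambda^4<0\), each quartet gives a factor \(z^4+\tau\) with \(\tau>0\), and only the power of \(z\) has to be computed sector by sector. You never use this symmetry; you read the factorization off each sector. When \(N=4m\) it comes from \eqref{eq:mirror-factor-even}--\eqref{eq:mirror-factor-odd} with \(\e^{4i\theta}=-1\). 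When \(N=4m+2\) it comes from the vanishing central product, which splits each sector's characteristic polynomial as \(\det(zI-\e^{i\pi/4}J)\det(zI-\e^{-i\pi/4}J)\). This is the same mechanism as the even case of \Cref{prop:central-scottish-factorization}, consistent with \(\chi^1_{N,\pi/2,0}=Q_{N,\pi/2}\) for \(N\equiv2\pmod4\).

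Your route stays inside the reflection framework and produces the zero multiplicity and the quartics in a single computation. It also identifies \(\tau_{r,N}\) concretely as \(\nu^2\) or \(\lambda^4\) for explicit Jacobi matrices, together with the sector each comes from. The paper's route instead gives the quarter-turn invariance \(\spec A_N=i\,\spec A_N\) as a structural fact, independent of any block decomposition. Its quartic form then needs nothing beyond the cross containment and the zero count.

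One step should be tightened. For \(N\equiv2\pmod4\), the sentence ``only one combination survives in the kernel'' is not yet an argument, because a kernel vector need not a priori be a combination of the two half-path null vectors. In the singular sector the nonzero central entry is \(\beta_{m+1,N}=1\) and the zero one is \(\alpha_{m,N}=0\). So the sector has the form \(\bigl(\begin{smallmatrix}T_L&\gamma e_de_1^T\\0&T_R\end{smallmatrix}\bigr)\) with \(\gamma\ne0\). A kernel vector \((x,y)\) has \(y=cv_R\) and \(T_Lx=-c\gamma(v_R)_1e_d\). Pair this with the left null vector \(u_L\) of \(T_L\), which satisfies \((u_L)_d\ne0\) by the same alternating-support argument you used for \(v_R\). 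This forces \(c=0\), so the kernel is \(\operatorname{span}\{(v_L,0)\}\). This is exactly \Cref{lem:one-directional}.
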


\begin{proof}
By \eqref{eq:scottish-products}, the products \(\rho_j\) are
\(\rho_{k,N}=\frac{i}{2}\cos((2k+1)\pi/N)\), and they are reflected with
opposite signs.  Thus the cyclic consequence of
\Cref{thm:one-sign-change}, \Cref{cor:cyclic-cross}, gives
\eqref{eq:intro-two diagonal lines}; this is also the case
\(\varphi=\pi/2\) of \Cref{thm:intro-unit-modulus}.  Let \(F\) be the unitary discrete Fourier transform on \(\mathbb C^N\),
defined by
\[
    F e_j
    :=\frac{1}{\sqrt N}\sum_{k\in\mathbb Z/N\mathbb Z}
      \omega^{-jk}e_k,
    \qquad
    \omega=e^{2\pi i/N}
\] and
\(Q=\diag((-1)^j)\).  Since
\[
 FD_N(0)F^*=C_N,\qquad FC_NF^*=D_N(0),\qquad
 QD_N(0)Q=D_N(0),\qquad QC_NQ=-C_N,
\]
we have
\[
 QA_NQ=A_N^*,\qquad (QF)A_N(QF)^*=i A_N.
\]
Hence, every nonzero eigenvalue occurs in a quartet
\begin{equation}
\label{eq:quartet}
 \lambda,\quad -\lambda,\quad i\lambda,\quad -i\lambda,
\end{equation}
with algebraic multiplicity.  The determinant formula proved in \Cref{thm:mirrored-path}
\eqref{eq:mirror-factor-even}--\eqref{eq:mirror-factor-odd} then
shows that every nonzero quartet contributes a factor \(z^4+\tau\) with
\(\tau>0\).  The remaining powers of \(z\), and the corresponding Jordan
blocks at zero, are established in \Cref{app:scottish-zero}.  Combining that
calculation with \eqref{eq:quartet} gives the two stated factorizations.
\end{proof}
The next result helps us locate the spectrum for tridiagonal matrices if they are almost hermitian. 
\begin{lemma}
\label{lem:one-defective-cycle}
Let \(C\in M_M(\mathbb C)\), \(M\ge3\), be cyclic tridiagonal with real
diagonal.  Write \(u_j=C_{j,j+1}\), \(\ell_j=C_{j+1,j}\), with indices
understood modulo \(M\), and \(p_j=u_j\ell_j\).  Suppose that
\(p_j\in\mathbb R\setminus\{0\}\), exactly one product \(p_{j_0}\) is
negative, all the others are positive, and
\[
 \prod_{j=0}^{M-1}|u_j|
 =
 \prod_{j=0}^{M-1}|\ell_j|.
\]
Then
\begin{equation}
\label{eq:defective-cycle-strip}
 \spec C\subset
 \left\{w\in\mathbb C:\ |\Im w|\le\sqrt{|p_{j_0}|}\right\}.
\end{equation}
\end{lemma}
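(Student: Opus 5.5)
The plan is to reduce $C$ by a positive diagonal similarity to a matrix that is Hermitian except on the single defective edge. Then the imaginary part of any eigenvalue is controlled by a numerical-range argument, exactly as in the proof of the line bounds in \Cref{thm:intro-unit-modulus}.

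\textbf{Step 1: balance the moduli.} Look for $D=\diag(d_0,\ldots,d_{M-1})$ with $d_j>0$ such that $C':=D^{-1}CD$ satisfies $|C'_{j,j+1}|=|C'_{j+1,j}|$ for every $j$ modulo $M$. Under this similarity
\[
u_j\mapsto u_j\,d_{j+1}/d_j,\qquad \ell_j\mapsto \ell_j\,d_j/d_{j+1}.
\]
The balancing condition is therefore $(d_{j+1}/d_j)^2=|\ell_j|/|u_j|$. All $u_j,\ell_j$ are nonzero because every $p_j\neq0$, so these ratios are defined.

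\begin{itemize}
\item Set $d_0=1$ and define $d_{j+1}$ recursively from these ratios.
\item Going once around the cycle, the recursion closes consistently exactly when $\prod_j|\ell_j|/|u_j|=1$. This is the hypothesis $\prod|u_j|=\prod|\ell_j|$.
\end{itemize}

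This is the only place where the cycle structure (as opposed to a path) matters, and it is the key step. The diagonal of $C'$ stays real, and the edge products $p_j$ are unchanged.

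\textbf{Step 2: identify the Hermitian and anti-Hermitian parts.} In $C'$ write $u'_j,\ell'_j$ for the off-diagonal entries, so that $|u'_j|=|\ell'_j|$ and $u'_j\ell'_j=p_j\in\R$.

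\begin{itemize}
\item If $p_j>0$, then $\arg\ell'_j=-\arg u'_j$, so $\ell'_j=\overline{u'_j}$.
\item If $p_{j_0}<0$, then $\ell'_{j_0}=-\overline{u'_{j_0}}$.
\end{itemize}

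Split $C'=H+iK$ with $H=(C'+C'^*)/2$ and $K=(C'-C'^*)/(2i)$, both Hermitian. The diagonal is real and every edge except $j_0$ is conjugate-symmetric, so $K$ vanishes except on the edge $j_0\leftrightarrow j_0+1$. There its entries are $u'_{j_0}/i$ and $\overline{u'_{j_0}/i}$. Because $M\ge3$, the edge $j_0\leftrightarrow j_0+1$ is not doubled by the cyclic wrap-around, so these are the only two nonzero entries of $K$. Hence
\[
\|K\|=|u'_{j_0}|=\sqrt{|p_{j_0}|}.
\]

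\textbf{Step 3: bound the imaginary part.} Let $C'x=wx$ with $\|x\|=1$. Then
\[
w=x^*Hx+i\,x^*Kx,
\]
with both quadratic forms real. Therefore
\[
|\Im w|=|x^*Kx|\le\|K\|=\sqrt{|p_{j_0}|}.
\]
Since $\spec C=\spec C'$, this gives \eqref{eq:defective-cycle-strip}.

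The main obstacle is Step 1, verifying that the balancing is consistent around the cycle. Once that is done, the sign structure of the products forces near-Hermiticity automatically, and the remaining steps are routine.
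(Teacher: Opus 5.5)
Your proof is correct and follows the paper's argument. You use the same positive diagonal balancing, with the product hypothesis as the closing condition around the cycle. After it, $C'$ is the Hermitian matrix with edge $j_0$ deleted plus the anti-Hermitian piece $iK$ on that edge, of norm $\sqrt{|p_{j_0}|}$.

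The only difference is the last step. You bound $\Im w$ through the Rayleigh quotient $x^*C'x$, which even places the whole numerical range in the strip. The paper instead uses the Neumann-series estimate $\|(w-H)^{-1}\|\le 1/\operatorname{dist}(w,\R)$, and both routes are equally short.
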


\begin{proof}
Choose positive numbers \(s_j\), cyclically indexed, by \(s_0=1\) and
\[
 \frac{s_{j+1}}{s_j}
 =\sqrt{\frac{|\ell_j|}{|u_j|}}.
\]
The product hypothesis is exactly the consistency condition \(s_M=s_0\).
For \(S=\diag(s_0,\ldots,s_{M-1})\), the two opposite off-diagonal entries for
\(j\) of \(\widetilde C=S^{-1}CS\) have the same modulus
\(\sqrt{|p_j|}\).  Since their product is real, they are complex conjugates
when \(p_j>0\), and negatives of complex conjugates when \(p_j<0\).
Delete the unique negative edge from \(\widetilde C\), obtaining a
Hermitian matrix \(H\).  Then
\[
 \widetilde C=H+E,\qquad
 \|E\|=\sqrt{|p_{j_0}|}.
\]
Because \(H\) is Hermitian, if
\(\operatorname{dist}(w,\mathbb R)>\|E\|\), then
\(\|(w-H)^{-1}E\|<1\), so \(w-H-E\) is invertible by the Neumann series.
Thus every eigenvalue of \(\widetilde C\), and hence of \(C\), satisfies
\eqref{eq:defective-cycle-strip}.
\end{proof}

\begin{corollary}
\label{cor:periodic-phase-dichotomy}
Let \(N=4m+2\ge6\), and consider the periodic \(\varphi=\pi/2\) family
\(A_N(\pi/2,\vartheta)\).  Define
\[
 d_N(\vartheta):=\operatorname{dist}\!\left(
 \vartheta,\frac{2\pi}{N}\mathbb Z\right)\in[0,\pi/N]
\]
and
\begin{equation*}
 \gamma_N(\vartheta)
 :=
 \frac12\sqrt{\sin d_N(\vartheta)\,
 \sin\!\left(\frac{2\pi}{N}-d_N(\vartheta)\right)}.
\end{equation*}
Then every \(z\in\spec A_N(\pi/2,\vartheta)\) satisfies
\begin{equation}
\label{eq:z2-strip}
 |\Re z^2|
 \le\gamma_N(\vartheta)
 \le\frac12\sin\frac{\pi}{N}.
\end{equation}
Consequently,
\begin{equation}
\label{eq:two diagonal lines-distance-global}
 \operatorname{dist}(z,\cX_{\pi/4})
 \le\sqrt{\frac{\gamma_N(\vartheta)}2}
 \le\frac12\sqrt{\sin\frac{\pi}{N}}.
\end{equation}
If \(z\ne0\), one also has
\begin{equation}
\label{eq:two diagonal lines-distance-away}
 \operatorname{dist}(z,\cX_{\pi/4})
 \le \frac{\gamma_N(\vartheta)}{\sqrt2\,|z|}
 \le \frac{\sin(\pi/N)}{2\sqrt2\,|z|}.
\end{equation}
At the aligned phases,
\[
 \vartheta\in2\pi\mathbb Z/N
 \quad\Longrightarrow\quad
 \spec A_N(\pi/2,\vartheta)\subset\cX_{\pi/4}.
\]
At every other phase,
\[
 \vartheta\notin2\pi\mathbb Z/N
 \quad\Longrightarrow\quad
 \spec A_N(\pi/2,\vartheta)\cap\cX_{\pi/4}=\varnothing.
\]
More precisely, with the positive numbers \(\tau_{r,N}\) from
\Cref{thm:intro-scottish},
\begin{equation}
\label{eq:4m2-phase-dichotomy}
 \chi_{N,\pi/2,\vartheta}^{1}(z)
 =
 z^2\prod_{r=1}^{m}(z^4+\tau_{r,N})
 +2^{1-N}\bigl(1-\cos(N\vartheta)\bigr).
\end{equation}
\end{corollary}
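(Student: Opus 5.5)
The plan is to obtain \eqref{eq:4m2-phase-dichotomy} first, then derive the exclusion statement from it, and finally prove the strip bound \eqref{eq:z2-strip} by squaring the zero-diagonal cyclic form.

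\emph{Step 1: the determinant formula.} Combine the Chambers formula \eqref{eq:Chambers-phase} (the case \(\tau=1\) of \Cref{thm:intro-arbitrary-phase}) with the factorization \(\chi^1_{N,\pi/2,0}(z)=z^2\prod_{r=1}^m(z^4+\tau_{r,N})\) from \Cref{thm:intro-scottish} for \(N=4m+2\). This gives \eqref{eq:4m2-phase-dichotomy} directly.

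\emph{Step 2: aligned versus non-aligned phases.} The aligned case is already covered by \Cref{thm:intro-unit-modulus}. For \(\vartheta\notin2\pi\Z/N\), set \(c:=2^{1-N}(1-\cos N\vartheta)>0\). If \(z\in\cX_{\pi/4}\), then \(z^2=iy\) with \(y\in\R\), so \(z^4=-y^2\). Hence \(z^2\prod_r(z^4+\tau_{r,N})=iy\prod_r(\tau_{r,N}-y^2)\) is purely imaginary. Adding the positive real number \(c\) cannot give zero, so no root lies on the cross.

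\emph{Step 3: the strip bound.} Work in the chirp basis, with \(L=\widehat A_N(\pi/2,\vartheta)\) as in \eqref{eq:tridiagonal-form}. Since \(N\) is even, the cycle is bipartite, so \(L^2\) splits into two cyclic tridiagonal blocks of size \(M=N/2\), one on the even vertices and one on the odd vertices.
\begin{itemize}
\item The diagonal entries of a block are \(\rho_{k-1}+\rho_k\). By \eqref{eq:general-phase-product} with \(\varphi=\pi/2\), \(\rho_k=\tfrac i2 c_k\) where \(c_k=\cos(2\pi(k+\tfrac12)/N-\vartheta)\). Thus \(C:=-iL^2\) restricted to each block has real diagonal.
\item The product of the two opposite entries of \(C\) across a block edge \(k\leftrightarrow k+2\) is \((-i)^2\rho_k\rho_{k+1}=\tfrac14 c_kc_{k+1}\), which is real.
\item The sign of \(c_k\) changes exactly twice around the full cycle. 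Because \(\pi/2=(m+\tfrac12)\tfrac{2\pi}{N}\), some \(c_k\) vanishes exactly when \(\vartheta\in2\pi\Z/N\). So for non-aligned \(\vartheta\) all products are nonzero.
\item The two sign changes are antipodal, separated by \(M=2m+1\), which is odd. Hence they lie in different parity classes, and each block has exactly one negative product. This is the point where \(N\equiv2\pmod4\) is used.
\item The modulus condition of \Cref{lem:one-defective-cycle} requires checking products of \(\alpha\)'s against products of \(\beta\)'s around each sub-cycle. Globally, \(|\prod_k\alpha_{k,N}|=2^{-N}|e^{iN\vartheta}-1|=|\prod_k\beta_{k,N}|\), using \(e^{iN\pi/2}=-1\). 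Each block's upper product collects alternating \(\alpha\)'s (and \(\beta\)'s), so I would verify the block-wise equality via the reflection symmetry \(\alpha_{-k}\leftrightarrow\beta_k\), shifted by \(\vartheta\).
\end{itemize}
I expect this modulus condition to be the main technical obstacle. If it proves awkward, the fallback is to first conjugate \(L\) by a diagonal matrix that equalizes \(|\alpha_k|\) and \(|\beta_{k+1}|\) on every edge except one, and then track how that single defect propagates through the squaring.

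Given the modulus condition, \Cref{lem:one-defective-cycle} yields
\[
|\Re z^2|=|\Im(-iz^2)|\le\tfrac12\sqrt{|c_{k_0}c_{k_0+1}|},
\]
where \(k_0\) is the crossing index. The two cosines straddle \(\pi/2\) at distances \(d\) and \(2\pi/N-d\), with \(d=d_N(\vartheta)\) after an index shift. So the product equals \(\sin d\,\sin(2\pi/N-d)\), which gives \(\gamma_N\). This expression is maximized at \(d=\pi/N\), giving \(\tfrac12\sin(\pi/N)\).

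\emph{Step 4: distance bounds.} Write \(z=re^{i\psi}\). Then \(\Re z^2=r^2\cos2\psi\), and \(\operatorname{dist}(z,\cX_{\pi/4})=r|\sin(\psi-\psi_0)|\), where \(\psi_0\) is the nearest diagonal angle. Using \(|\cos2\psi|=|\sin 2(\psi-\psi_0)|\ge 2|\sin(\psi-\psi_0)|\,|\cos(\psi-\psi_0)|\) together with \(|\cos(\psi-\psi_0)|\ge1/\sqrt2\), I get \(\operatorname{dist}\le\gamma_N/(\sqrt2 r)\), which is \eqref{eq:two diagonal lines-distance-away}. Also \(\operatorname{dist}^2\le\gamma_N/2\), since \(\operatorname{dist}^2=r^2\sin^2(\psi-\psi_0)\le\tfrac12 r^2|\cos2\psi|\); this gives \eqref{eq:two diagonal lines-distance-global}.
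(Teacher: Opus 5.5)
Your route is the paper's. You get \eqref{eq:4m2-phase-dichotomy} and the exclusion from Chambers' formula plus the $N=4m+2$ factorization of \Cref{thm:intro-scottish}. You then square $\widehat A_N(\pi/2,\vartheta)$ into two cyclic $M\times M$ blocks and apply \Cref{lem:one-defective-cycle}, and finish with elementary geometry. Your polar-coordinate argument is equivalent to the paper's $d_1d_2=|\Re z^2|/2$ computation. Steps 1, 2 and 4 are fine, and so are the following parts of Step 3:
\begin{itemize}
\item the real diagonal;
\item the edge products $\tfrac14c_kc_{k+1}$;
\item the non-vanishing of all $c_k$ off the aligned phases;
\item the parity argument giving exactly one negative product per block;
\item the identification of the defect with $\gamma_N$. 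The second block's crossing straddles $3\pi/2$ rather than $\pi/2$, but it has the same distances $d$ and $2\pi/N-d$.
\end{itemize}

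\textbf{The gap.} You never verify the balancing hypothesis of \Cref{lem:one-defective-cycle}, and your description of what must be checked is wrong. The block cycle products are not alternating. Since $(\widehat A_N^2)_{k+2,k}=\alpha_{k+1,N}\alpha_{k,N}$ and $(\widehat A_N^2)_{k,k+2}=\beta_{k+1,N}\beta_{k+2,N}$, the even block of $-i\widehat A_N^2$ has forward entries $-i\alpha_{2r,N}\alpha_{2r+1,N}$ and backward entries $-i\beta_{2r+1,N}\beta_{2r+2,N}$, for $0\le r<M$. Going once around the $M$-cycle therefore gives $(-i)^M\prod_{k=0}^{N-1}\alpha_{k,N}$ in one direction and $(-i)^M\prod_{k=0}^{N-1}\beta_{k,N}$ in the other. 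Each product contains every coefficient exactly once. The odd block is identical with indices shifted by one.

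So the global identity you already computed, $|\prod_k\alpha_{k,N}|=2^{-N}|e^{iN\vartheta}-1|=|\prod_k\beta_{k,N}|$, is exactly the block-wise balancing condition. This is how the paper argues, via \eqref{eq:alpha-beta-products}, whose right-hand sides are complex conjugates.

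\textbf{Why your proposed routes don't work or aren't needed.} The reflection symmetry $\alpha_{-k,N}\leftrightarrow\beta_{k,N}$ holds only at aligned phases. For $\vartheta\notin2\pi\Z/N$, $\alpha_{-k,N}$ carries $e^{i\vartheta}$ while $\beta_{k,N}$ carries $e^{-i\vartheta}$, so it fails precisely in the case where you need the lemma. The ``single defect'' fallback is unnecessary. Because all $\alpha_{k,N},\beta_{k,N}$ are nonzero off the aligned phases, the global identity already lets you diagonally balance every edge of $\widehat A_N$ itself, and both squared blocks are then automatically balanced.

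With this observation inserted, your proof is complete.
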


\begin{proof}
The statement at the aligned phases
\[
    \vartheta\in \frac{2\pi}{N}\mathbb Z
\]
is the case \(\varphi=\pi/2\) of
\Cref{thm:intro-unit-modulus}.  Since the spectrum is invariant under
\(\vartheta\mapsto\vartheta+2\pi/N\), we may replace \(\vartheta\) by
its representative
\[
    \vartheta_0\in[0,2\pi/N].
\]
If \(\vartheta_0=0\) or \(2\pi/N\), then
\(\gamma_N(\vartheta)=0\), and the desired estimates follow from the
exact containment
\[
    \spec A_N(\pi/2,\vartheta_0)\subset \cX_{\pi/4}.
\]
We therefore assume throughout the rest of the proof that
\[
    0<\vartheta_0<\frac{2\pi}{N}.
\]

Write
\[
    N=4m+2=2M,\qquad M=2m+1.
\]
In particular, \(M\) is odd.  In the quadratic-phase basis
\eqref{eq:parity-chirp-basis}, define
\[
    c_k:=
    \frac12\cos\!\left(
        \frac{(2k+1)\pi}{N}-\vartheta_0
    \right),
    \qquad k\in\mathbb Z/N\mathbb Z.
\]
At \(\varphi=\pi/2\), equations
\eqref{eq:tridiagonal-form}--\eqref{eq:general-phase-product} give
\[
    \rho_k
    =\alpha_{k,N}\beta_{k+1,N}
    =ic_k.
\]

We first square the matrix.  Every application of
\(\widehat A_N(\pi/2,\vartheta_0)\) changes the index by \(1\), so the
matrix exchanges the even and odd index subspaces.  Since \(N=2M\),
each of these subspaces has dimension \(M\).  Ordering first the even
basis vectors and then the odd ones therefore gives
\[
    \widehat A_N(\pi/2,\vartheta_0)
    =
    \begin{pmatrix}
        0&U\\
        V&0
    \end{pmatrix},
\]
with \(U,V\in\mathbb C^{M\times M}\), and hence
\[
    \widehat A_N(\pi/2,\vartheta_0)^2
    =
    \begin{pmatrix}
        UV&0\\
        0&VU
    \end{pmatrix}.
\]
Thus the square preserves parity.

It is useful to see explicitly what the two squared blocks look like.
From
\[
    \widehat A_N\psi_k
    =
    \alpha_{k,N}\psi_{k+1}
    +\beta_{k,N}\psi_{k-1},
\]
we obtain
\[
\begin{split}
    \widehat A_N^2\psi_k
    &=
    \alpha_{k,N}\alpha_{k+1,N}\psi_{k+2}
    +\bigl(
        \alpha_{k,N}\beta_{k+1,N}
        +\beta_{k,N}\alpha_{k-1,N}
      \bigr)\psi_k
    +\beta_{k,N}\beta_{k-1,N}\psi_{k-2}.
\end{split}
\]
Since $\alpha_{k,N}\beta_{k+1,N}=ic_k,
    \beta_{k,N}\alpha_{k-1,N}=ic_{k-1},$
the diagonal coefficient of \(-i\widehat A_N^2\) at \(k\) is $c_k+c_{k-1}\in\mathbb R.$
Consequently
\[
    K_0:=-iUV,\qquad K_1:=-iVU
\]
are \(M\times M\) cyclic tridiagonal matrices with real diagonal.

We next compute their edge products.  In the even block, the two
entries connecting the sites corresponding to \(2r\) and \(2r+2\)
have product
\[
\begin{split}
    (-i)^2
    \bigl(\alpha_{2r,N}\alpha_{2r+1,N}\bigr)
    \bigl(\beta_{2r+2,N}\beta_{2r+1,N}\bigr)
    &=
    -\bigl(\alpha_{2r,N}\beta_{2r+1,N}\bigr)
     \bigl(\alpha_{2r+1,N}\beta_{2r+2,N}\bigr)\\
    &=
    -(ic_{2r})(ic_{2r+1})\\
    &=
    c_{2r}c_{2r+1}.
\end{split}
\]
Similarly, the edge products in the odd block are $c_{2r+1}c_{2r+2}.$

We now verify the balancing hypothesis in
\Cref{lem:one-defective-cycle}.  At \(\varphi=\pi/2\), since
\(\delta_N=1/2\),
\[
    \alpha_{k,N}
    =
    \frac{e^{i\vartheta_0}
          +i\omega^{k+1/2}}{2}.
\]
The numbers \(\omega^{k+1/2}\), \(0\leq k<N\), are precisely the
\(N\) roots of \(\zeta^N=-1\).  Because \(N\) is even,
\[
    \prod_{\zeta^N=-1}(x+y\zeta)=x^N+y^N.
\]
Taking \(x=e^{i\vartheta_0}\) and \(y=i\) gives
\[
    \prod_{k=0}^{N-1}\alpha_{k,N}
    =
    2^{-N}\bigl(e^{iN\vartheta_0}+i^N\bigr).
\]
The same calculation for \(\beta_{k,N}\) gives
\begin{equation}
\label{eq:alpha-beta-products}
    \prod_{k=0}^{N-1}\alpha_{k,N}
    =
    2^{-N}\bigl(e^{iN\vartheta_0}+i^N\bigr),
    \qquad
    \prod_{k=0}^{N-1}\beta_{k,N}
    =
    2^{-N}\bigl(e^{-iN\vartheta_0}+i^N\bigr).
\end{equation}
Since \(N\) is even, \(i^N\in\mathbb R\), and the two quantities on
the right are complex conjugates.  In particular,
\[
    \prod_{k=0}^{N-1}|\alpha_{k,N}|
    =
    \prod_{k=0}^{N-1}|\beta_{k,N}|.
\]

For either squared block, the product of the entries in one direction
around the cycle is, up to factors of modulus one, the product of all
the \(\alpha_{k,N}\), while the product in the opposite direction is,
again up to factors of modulus one, the product of all the
\(\beta_{k,N}\).  Indeed, in the even block the forward coefficients
are
\[
    -i\alpha_{2r,N}\alpha_{2r+1,N},
    \qquad 0\leq r<M,
\]
whose product contains every \(\alpha_{k,N}\) exactly once; the
backward coefficients similarly contain every \(\beta_{k,N}\)
exactly once.  The odd block has the same property, with the indices
shifted by one.  Hence both \(K_0\) and \(K_1\) satisfy the balancing
condition in \Cref{lem:one-defective-cycle}.

It remains to understand the signs of their edge products.  The
angles
\[
    \frac{(2k+1)\pi}{N}-\vartheta_0
\]
advance by \(2\pi/N\) as \(k\) increases.  Since
\(0<\vartheta_0<2\pi/N\), the sequence \(c_k\) changes sign exactly
twice around the full cycle, once when the sampled angle crosses
\(\pi/2\) and once when it crosses \(3\pi/2\).

The first sign change occurs between \(k=m\) and \(k=m+1\).  Indeed,
using \(N=4m+2\),
\[
    \frac{(2m+1)\pi}{N}=\frac{\pi}{2},
\]
and therefore
\[
    c_m
    =
    \frac12\cos\!\left(\frac{\pi}{2}-\vartheta_0\right)
    =
    \frac12\sin\vartheta_0>0,
\]
whereas
\[
\begin{split}
    c_{m+1}
    &=
    \frac12\cos\!\left(
        \frac{\pi}{2}+\frac{2\pi}{N}-\vartheta_0
    \right)=
    -\frac12\sin\!\left(
        \frac{2\pi}{N}-\vartheta_0
    \right)<0.
\end{split}
\]
The second sign change occurs between \(k=3m+1\) and \(k=3m+2\).
The two sign-changing pairs are therefore separated by
\[
    (3m+1)-m=2m+1=M
\]
indices.  Since \(M\) is odd, one of these pairs starts at an even
index and the other starts at an odd index.  Consequently exactly one
negative adjacent product occurs among $c_{2r}c_{2r+1},$
and exactly one occurs among $c_{2r+1}c_{2r+2}.$
In other words, each of \(K_0\) and \(K_1\) has exactly one negative
edge product, while all of its other edge products are positive.

The two negative products have the same absolute value.  From the
formulas above,
\[
    |c_mc_{m+1}|
    =
    \frac14
    \sin\vartheta_0
    \sin\!\left(\frac{2\pi}{N}-\vartheta_0\right).
\]
Since
\[
    d_N(\vartheta)
    =
    \min\!\left\{
        \vartheta_0,\frac{2\pi}{N}-\vartheta_0
    \right\},
\]
and the product of the two sine factors is symmetric under
\(\vartheta_0\mapsto2\pi/N-\vartheta_0\), this equals $\gamma_N(\vartheta)^2.$
We may therefore apply \Cref{lem:one-defective-cycle} to both squared
blocks.  It gives
\[
    \spec K_\nu
    \subset
    \{w\in\mathbb C:|\Im w|\leq\gamma_N(\vartheta)\},
    \qquad \nu=0,1.
\]

Now let $z\in\spec A_N(\pi/2,\vartheta_0).$
The change to the quadratic-phase basis is unitary, so \(z\) is also
an eigenvalue of \(\widehat A_N(\pi/2,\vartheta_0)\).  Therefore
\(z^2\) is an eigenvalue of its square, and hence belongs to the
spectrum of one of \(UV\) and \(VU\).  It follows that
\[
    -iz^2\in\spec K_0\cup\spec K_1.
\]
The preceding strip estimate gives
\[
    |\Re z^2|
    =
    |\Im(-iz^2)|
    \leq\gamma_N(\vartheta),
\]
which is the first inequality in \eqref{eq:z2-strip}.

For the second inequality, put \(h=2\pi/N\).  For
\(0\leq t\leq h\),
\[
    \sin t\,\sin(h-t)
    =
    \frac{\cos(2t-h)-\cos h}{2},
\]
which is maximal at \(t=h/2=\pi/N\).  Hence
\[
    \gamma_N(\vartheta)
    \leq
    \frac12\sin\frac{\pi}{N}.
\]
This completes the proof of \eqref{eq:z2-strip}.

We next translate the bound on \(z^2\) into a distance from the two
diagonal lines.  Write \(z=x+iy\), and denote the distances from
\(z\) to the two lines in \(\cX_{\pi/4}\) by
\[
    d_1=\frac{|x-y|}{\sqrt2},
    \qquad
    d_2=\frac{|x+y|}{\sqrt2}.
\]
Since
\[
    \Re z^2=x^2-y^2=(x-y)(x+y),
\]
we have
\[
    d_1d_2=\frac{|\Re z^2|}{2},
\]
and directly
\[
    d_1^2+d_2^2=x^2+y^2=|z|^2.
\]
Let
\[
    d:=\min(d_1,d_2)
    =\operatorname{dist}(z,\cX_{\pi/4}),
    \qquad
    D:=\max(d_1,d_2).
\]
Then \(d\leq D\), so
\[
    d^2\leq dD=d_1d_2
    \leq\frac{\gamma_N(\vartheta)}{2}.
\]
Therefore
\[
    \operatorname{dist}(z,\cX_{\pi/4})
    \leq
    \sqrt{\frac{\gamma_N(\vartheta)}{2}},
\]
and the second bound in
\eqref{eq:two diagonal lines-distance-global} follows from
\(\gamma_N(\vartheta)\leq\frac12\sin(\pi/N)\).

If \(z\neq0\), then
\[
    D^2\geq\frac{d_1^2+d_2^2}{2}
    =\frac{|z|^2}{2},
\]
so \(D\geq|z|/\sqrt2\).  Using again
\(dD=d_1d_2\leq\gamma_N(\vartheta)/2\), we obtain
\[
    \operatorname{dist}(z,\cX_{\pi/4})
    =d
    \leq
    \frac{\gamma_N(\vartheta)}{\sqrt2\,|z|},
\]
which proves \eqref{eq:two diagonal lines-distance-away}.

It remains to prove that, away from the aligned phases, no eigenvalue
lies exactly on the two diagonal lines.  Combining
\eqref{eq:Chambers-phase} with the \(N=4m+2\) factorization in
\Cref{thm:intro-scottish} gives
\eqref{eq:4m2-phase-dichotomy}, namely
\[
    \chi^1_{N,\pi/2,\vartheta}(z)
    =
    z^2\prod_{r=1}^m(z^4+\tau_{r,N})
    +
    2^{1-N}\bigl(1-\cos(N\vartheta)\bigr).
\]
Suppose that \(z\in\cX_{\pi/4}\).  Then
\[
    z^2\in i\mathbb R,
    \qquad
    z^4\in\mathbb R.
\]
Since every \(\tau_{r,N}\) is real and positive,
\[
    z^2\prod_{r=1}^m(z^4+\tau_{r,N})
    \in i\mathbb R.
\]
On the other hand, if
\(\vartheta\notin2\pi\mathbb Z/N\), then
\[
    1-\cos(N\vartheta)>0,
\]
so the second term in
\eqref{eq:4m2-phase-dichotomy} is strictly positive and real.
A purely imaginary number and a strictly positive real number cannot
sum to zero.  Hence
\[
    \chi^1_{N,\pi/2,\vartheta}(z)\neq0
    \qquad\text{for every }z\in\cX_{\pi/4}.
\]
Thus
\[
    \spec A_N(\pi/2,\vartheta)\cap\cX_{\pi/4}=\varnothing
\]
whenever
\(\vartheta\notin2\pi\mathbb Z/N\), completing the proof.
\end{proof}

\begin{theorem}
\label{thm:odd-scottish}
Let \(N\ge3\) be odd, and put $m:=\left\lfloor\frac N4\right\rfloor, 
 r_N:=N-4m\in\{1,3\}.$
Then
\begin{equation}
\label{eq:odd-scottish-central}
 \chi_{N,\pi/2,\pi/2}^{i^N}(z)=Q_{N,\pi/2}(z),
 \qquad
 \det(zI-B_N)=(-2)^NQ_{N,\pi/2}(-z/2).
\end{equation}
\begin{equation}
\label{eq:odd-B-factorization}
 \det(zI-B_N)
 =z^{r_N}\prod_{j=1}^{m}(z^4+16\eta_{j,N}).
\end{equation}
Consequently,
\begin{equation}
\label{eq:odd-B-cross}
 \spec(B_N)\subset\cX_{\pi/4},
\end{equation}
every nonzero eigenvalue is simple, and
\begin{equation}
\label{eq:odd-B-zero-Jordan}
 B_N|_{\cG_0(B_N)}
 \sim
 \begin{cases}
 J_1(0),&N\equiv1\pmod4,\\
 J_3(0),&N\equiv3\pmod4.
 \end{cases}
\end{equation}
Moreover,
\begin{equation}
\label{eq:quarter-turn-spectrum}
 \spec(B_N)=i\,\spec(B_N)
\end{equation}
with algebraic multiplicity.
\end{theorem}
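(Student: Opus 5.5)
The first identity in \eqref{eq:odd-scottish-central} is exactly \eqref{eq:odd-central-Chambers-intro} from \Cref{thm:intro-arbitrary-phase}, so nothing new is needed there. For the second, I would use the conjugation \eqref{eq:SF}. It says that $-B_N$ is similar to $2A_N^{i^N}(\pi/2,\pi/2)$, and hence
\[
 \det(zI-B_N)=\det\bigl(zI+2A_N^{i^N}(\pi/2,\pi/2)\bigr)=(-2)^N\det\bigl((-z/2)I-A_N^{i^N}(\pi/2,\pi/2)\bigr),
\]
which equals $(-2)^N\chi_{N,\pi/2,\pi/2}^{i^N}(-z/2)=(-2)^NQ_{N,\pi/2}(-z/2)$. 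Before relying on \eqref{eq:SF}, I would verify it by a one-line entrywise check: $G_N^{-1}SG_N$ equals $i$ times the shift, and the corner entry acquires the twist $i^{N}$.

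Next I would insert the factorization of \Cref{prop:central-scottish-factorization}, namely $Q_{N,\pi/2}(w)=w^{r_N}\prod_j(w^4+\eta_{j,N})$, at $w=-z/2$. This gives
\[
 (-2)^N(-z/2)^{r_N}\prod_{j=1}^{m}\bigl(z^4/16+\eta_{j,N}\bigr)=(-2)^{N}(-1/2)^{r_N}16^{-m}\,z^{r_N}\prod_{j=1}^{m}(z^4+16\eta_{j,N}).
\]
Since $N=4m+r_N$, the constant is $(-1)^{N+r_N}2^{N-r_N-4m}=1$, because $N+r_N$ is even. This proves \eqref{eq:odd-B-factorization}.

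From this factorization three consequences follow directly. The roots satisfy $z^4=-16\eta_{j,N}<0$, so they lie on $\cX_{\pi/4}$, which gives \eqref{eq:odd-B-cross}. The $\eta_{j,N}$ are pairwise distinct and each quartic $z^4+c$ with $c>0$ has four distinct roots, so every nonzero eigenvalue is simple. The polynomial is invariant under $z\mapsto iz$ up to the factor $i^{r_N}$, which gives \eqref{eq:quarter-turn-spectrum} with multiplicities.

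The main obstacle is the Jordan structure at zero in \eqref{eq:odd-B-zero-Jordan}. The algebraic multiplicity is $r_N$, so for $N\equiv1\pmod4$ there is nothing to prove. For $N\equiv3$ I need the geometric multiplicity to be $1$, and for that it suffices to show $\rank B_N=N-1$. My first attempt would use the cyclic tridiagonal structure: deleting the first row and last column of $B_N$ leaves a triangular matrix whose diagonal consists of the off-diagonal entries $\pm1$ of $S-S^{-1}$, so this $(N-1)\times(N-1)$ minor is nonzero. Hence $\rank B_N\ge N-1$, the kernel is one-dimensional, and the zero eigenvalue of algebraic multiplicity $3$ forms a single block $J_3(0)$. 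This argument is the computation of the paper's appendix on zero eigenvalues. I would double-check that the corner entries do not spoil the triangularity of the chosen minor; if they do, I would instead delete a row and column adjacent to the corner.
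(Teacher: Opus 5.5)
Most of the proposal is correct and follows the paper. This covers \eqref{eq:odd-scottish-central}, the constant $(-1)^{N+r_N}2^{N-r_N-4m}=1$ in \eqref{eq:odd-B-factorization}, the cross containment, simplicity, \eqref{eq:quarter-turn-spectrum}, and the case $N\equiv1\pmod4$ of \eqref{eq:odd-B-zero-Jordan}. One small slip in your check of \eqref{eq:SF}: with $Se_j=e_{j+1}$ one gets $G_N^{-1}SG_N=-iS_{i^N}$, not $i$ times the shift, and this sign is what produces the minus sign in $-G_N^{-1}B_NG_N$.

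The gap is the geometric multiplicity at zero for $N\equiv3\pmod4$. Index rows and columns by $0,\dots,N-1$. Then $(B_N)_{j+1,j}=1$ and $(B_N)_{j,j+1}=-1$, and the two corner entries are $(B_N)_{0,N-1}=1$ and $(B_N)_{N-1,0}=-1$.
\begin{itemize}
\item Deleting row $0$ and column $N-1$ removes only the first corner. The entry $(B_N)_{N-1,0}=-1$ sits in a kept row and a kept column, so it lands in the bottom-left position of your minor.
\item The minor is therefore unit upper triangular plus a corner entry, not triangular. Already for $N=3$ it equals $\det\begin{pmatrix}1&\sqrt3\\-1&1\end{pmatrix}=1+\sqrt3$.
\item For general odd $N$, the matrix determinant lemma gives the value $1+\kappa_N$. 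Here $\kappa_N$ is the $(1,1)$ entry of $T_{N-2}\cdots T_1$, with $T_j=\begin{pmatrix}2s_j&1\\1&0\end{pmatrix}$ and $s_j=\sin(2\pi j/N)$. Nothing in your argument excludes $\kappa_N=-1$.
\end{itemize}
Deleting another row and column does not cure this. For a cyclic tridiagonal matrix, the Leibniz expansion of any $(N-1)\times(N-1)$ minor closes up the cycle between the deleted row and column. It produces determinants of sub-paths, i.e.\ continuants in the $2s_j$. So nonvanishing is a genuine computation, not a triangularity observation.

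What your idea legitimately gives is one dimension less. Rows $1,\dots,N-2$ with columns $0,\dots,N-3$ avoid both corners and form a unit upper triangular block. Hence $\rank B_N\ge N-2$, i.e.\ $\dim\ker B_N\le2$; equivalently, a solution of $x_{j+1}=2s_jx_j+x_{j-1}$ is determined by two consecutive coordinates.

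The missing idea is how to exclude $\dim\ker B_N=2$, and the paper's proof supplies it with the monodromy. Kernel vectors of $B_N$ correspond to fixed vectors of $\mathcal M_N=T_{N-1}\cdots T_1T_0$. A two-dimensional fixed space would force $\mathcal M_N=I$. But $\det T_j=-1$ gives $\det\mathcal M_N=(-1)^N=-1$. Hence $\dim\ker B_N=1$, and with algebraic multiplicity $3$ this yields a single block $J_3(0)$.

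Also, this is not the computation in the paper's appendix on zero eigenvalues. That appendix treats even $N$; the odd case is done by the transfer-matrix argument inside the proof itself.
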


\begin{proof}
For odd \(N\), the two cosine terms in
\eqref{eq:twisted-Chambers-Q} vanish at
\((\varphi,\vartheta,\tau)=(\pi/2,\pi/2,i^N)\), proving the first identity in
\eqref{eq:odd-scottish-central}.  The conjugation
\(-G_N^{-1}B_NG_N=2A_N^{i^N}(\pi/2,\pi/2)\) gives the second.
Substituting \eqref{eq:central-scottish-factorization} proves
\eqref{eq:odd-B-factorization}, hence the cross containment, simplicity,
and quarter-turn symmetry.

It remains to determine the Jordan block at zero.  Put
\(s_j=\sin(2\pi j/N)\).  The equation \(B_Nx=0\) is equivalent to
\begin{equation}
\label{eq:odd-B-transfer}
 \binom{x_{j+1}}{x_j}
 =
 T_j\binom{x_j}{x_{j-1}},
 \qquad
 T_j=
 \begin{pmatrix}2s_j&1\\1&0\end{pmatrix}.
\end{equation}
Let \(\mathcal M_N=T_{N-1}\cdots T_1T_0\).  Periodic solutions are in
one-to-one correspondence with fixed vectors of \(\mathcal M_N\).
Since \(\det\mathcal M_N=(-1)^N=-1\), its fixed space cannot be
two-dimensional: otherwise \(\mathcal M_N=I\), whose determinant is
\(1\).  The factorization shows that zero is an eigenvalue, so this fixed
space is nonzero.  Hence \(\dim\ker B_N=1\).  Its algebraic multiplicity
is \(r_N\), and the unique zero Jordan block has size \(r_N\).
\end{proof}

\subsection{The original Scottish flag matrix}
\label{sub:scottish-original}

\begin{theorem}
\label{thm:original-scottish}
For every \(N\ge3\),
\begin{equation}
\label{eq:original-B-cross}
 \spec(B_N)\subset\cX_{\pi/4}.
\end{equation}
Moreover,
\begin{equation}
\label{eq:B-characteristic-scaling}
 \det(zI-B_N)
 =
 \begin{cases}
 \displaystyle
 2^N\det\!\left(-\frac z2 I-A_N\right),
 &N\ \mathrm{even},\\[7pt]
 \displaystyle
 (-2)^NQ_{N,\pi/2}(-z/2),
 &N\ \mathrm{odd}.
 \end{cases}
\end{equation}
More precisely, the following four cases hold.

If \(N=4m\), then there are \(\sigma_{1,N},\ldots,\sigma_{m-1,N}>0\)
such that
\[
 B_N|_{\cG_0(B_N)}\sim J_3(0)\oplus J_1(0),
 \qquad
 \det(zI-B_N)=z^4\prod_{r=1}^{m-1}(z^4+\sigma_{r,N}).
\]
If \(N=4m+2\), then there are numbers \(\sigma_{r,N}>0\) for
\(1\le r\le m\) such that
\[
 B_N|_{\cG_0(B_N)}\sim J_1(0)\oplus J_1(0),
 \qquad
 \det(zI-B_N)=z^2\prod_{r=1}^{m}(z^4+\sigma_{r,N}).
\]
If \(N=4m+1\), then there are pairwise distinct
\(\sigma_{1,N},\ldots,\sigma_{m,N}>0\) such that
\[
 B_N|_{\cG_0(B_N)}\sim J_1(0),
 \qquad
 \det(zI-B_N)=z\prod_{r=1}^{m}(z^4+\sigma_{r,N}).
\]
If \(N=4m+3\), then there are pairwise distinct
\(\sigma_{1,N},\ldots,\sigma_{m,N}>0\) such that
\[
 B_N|_{\cG_0(B_N)}\sim J_3(0),
 \qquad
 \det(zI-B_N)=z^3\prod_{r=1}^{m}(z^4+\sigma_{r,N}).
\]
In odd dimension every nonzero eigenvalue is simple.  Thus the Scottish
flag phenomenon holds for every \(N\ge3\).
\end{theorem}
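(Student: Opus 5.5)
The plan is to split by the parity of \(N\) and reduce each case to results already proved. The conjugation identity \eqref{eq:SF} gives \(B_N=-2G_NA_N^{i^N}(\pi/2,\pi/2)G_N^{-1}\), so
\[
\det(zI-B_N)=\det\bigl(zI+2A_N^{i^N}(\pi/2,\pi/2)\bigr)=2^N\det\bigl(\tfrac z2 I+A_N^{i^N}(\pi/2,\pi/2)\bigr)=(-2)^N\chi^{i^N}_{N,\pi/2,\pi/2}(-z/2).
\]
For even \(N\), \eqref{eq:twist-cancellation-intro} replaces \(\chi^{i^N}_{N,\pi/2,\pi/2}\) by \(\chi^1_{N,\pi/2,0}=\det(\cdot\, I-A_N)\). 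Since \((-1)^N=1\), this gives the first line of \eqref{eq:B-characteristic-scaling}. For odd \(N\), \eqref{eq:odd-central-Chambers-intro} gives the second line; this is already contained in \Cref{thm:odd-scottish}.

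\emph{Factorizations and containment.} For even \(N\), I substitute \(-z/2\) into the factorizations of \Cref{thm:intro-scottish}. Each factor transforms as
\[
(-z/2)^4+\tau=2^{-4}(z^4+16\tau),
\]
and \(z^2\) and \(z^4\) are invariant under \(z\mapsto-z\). The powers of \(2\) cancel against \(2^N\), so one can take \(\sigma_{r,N}=16\tau_{r,N}\). For odd \(N\), the same substitution is exactly \eqref{eq:odd-B-factorization}, with \(\sigma_{r,N}=16\eta_{r,N}\); these are pairwise distinct by \Cref{prop:central-scottish-factorization}. Since \(z^4=-\sigma<0\) forces \(z\in\cX_{\pi/4}\), the containment \eqref{eq:original-B-cross} follows. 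In odd dimension the nonzero roots are simple, because the \(\sigma\)'s are distinct and each quartic \(z^4+\sigma\) has four distinct roots. The odd Jordan structure at zero is \eqref{eq:odd-B-zero-Jordan}.

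\emph{Jordan structure for even \(N\).} Similarity of \(B_N\) and \(-2A_N^{i^N}(\pi/2,\pi/2)\) reduces this to the twisted matrix \(A_N^{i^N}(\pi/2,\pi/2)\). When \(4\mid N\) we have \(i^N=1\), and a translation by \(S^m\) (as in the proof of \Cref{thm:intro-unit-modulus}, with \(\pi/2=2\pi(N/4)/N\)) makes it unitarily equivalent to \(A_N\). Hence \Cref{thm:intro-scottish} yields \(J_3(0)\oplus J_1(0)\).

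\emph{The main obstacle: \(N\equiv2\pmod4\).} Here the twist is \(\tau=-1\), and \(B_N\) is not equivalent to \(A_N\); the paper notes the Jordan blocks differ. The algebraic multiplicity at zero is \(2\) from the factorization, so it suffices to show \(\dim\ker B_N=2\). I would use the transfer matrices \eqref{eq:odd-B-transfer}, \(T_j\) with \(s_j=\sin(2\pi j/N)\). Periodic kernel vectors correspond to fixed vectors of \(\mathcal M_N=T_{N-1}\cdots T_0\), so I aim to prove \(\mathcal M_N=I\). The symmetry \(s_{j+N/2}=-s_j\) should help. Write \(\mathcal M_N=\mathcal P^-\mathcal P\) with \(\mathcal P=T_{N/2-1}\cdots T_0\), where \(\mathcal P^-\) uses the negated \(s\)'s. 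Conjugating by \(\diag(1,-1)\) sends \(T_j(s)\) to \(-T_j(-s)\). This gives \(\mathcal P^-=(-1)^{N/2}E\mathcal PE=-E\mathcal PE\) with \(E=\diag(1,-1)\), since \(N/2\) is odd. The remaining task is to show \(\mathcal P E\mathcal P E=-I\), likely using the reflection \(s_{N/2-j}=s_j\), which makes \(\mathcal P\) palindromic. Failing a clean argument, I would fall back on the computation in \Cref{app:antiperiodic-zero}, which the paper indicates handles this zero-eigenvalue case. A full two-dimensional kernel then forces \(J_1(0)\oplus J_1(0)\).
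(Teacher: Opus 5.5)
Your proposal is correct. Outside the kernel computation for \(N\equiv2\pmod4\) it is the paper's proof: the identity \(-G_N^{-1}B_NG_N=2A_N^{i^N}(\pi/2,\pi/2)\) combined with \eqref{eq:twist-cancellation-intro} or \Cref{thm:odd-scottish}, the substitution \(z\mapsto -z/2\) giving \(\sigma=16\tau\) or \(16\eta\), and translation by \(S^{N/4}\) when \(4\mid N\). For \(N\equiv2\pmod4\) the paper simply cites \Cref{prop:antiperiodic-zero}, which is your fallback. That proposition passes to the quadratic-phase basis, exhibits the twisted matrix as block triangular, and builds a second null vector from explicit left and right null vectors and a sine--cosine multiset identity.

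Your transfer-matrix route, which you left unfinished, in fact closes in a few lines:
\begin{itemize}
\item Since \(s_{N/2-j}=s_j\) and each \(T_j\) is symmetric, \(W:=T_{N/2-1}\cdots T_1\) satisfies \(W^T=W\).
\item Since \(s_0=0\), \(T_0\) is the coordinate swap, so \(\mathcal P=WT_0\) has \(\mathcal P_{11}=W_{12}=W_{21}=\mathcal P_{22}\). Hence \(\tr(\mathcal PE)=0\).
\item Also \(\det(\mathcal PE)=-(-1)^{N/2}=1\) when \(N/2\) is odd.
\item Cayley--Hamilton for \(2\times2\) matrices then gives \((\mathcal PE)^2=-I\), equivalently \((E\mathcal P)^2=-I\). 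So \(\mathcal M_N=-(E\mathcal P)^2=I\) and \(\dim\ker B_N=2\).
\end{itemize}
This is more elementary and self-contained than the appendix computation, and it never leaves the original coordinates. The same calculation, now with \(\mathcal P^-=E\mathcal PE\) and \(\det(\mathcal PE)=-1\), gives \(\mathcal M_N=I\) also when \(4\mid N\). That is consistent with the two Jordan blocks in \(J_3(0)\oplus J_1(0)\).

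What the paper's route buys instead is uniformity with the quadratic-phase machinery used throughout, together with explicit null vectors of the twisted matrix \(A_N^{-1}(\pi/2,\pi/2)\).
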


\begin{proof}
Suppose first that \(N\) is even.  The conjugation of
\Cref{sec:introduction} gives
\(-G_N^{-1}B_NG_N=2A_N^{i^N}(\pi/2,\pi/2)\).  By
\eqref{eq:twist-cancellation-intro}, this twisted matrix and the periodic
\(\varphi=\pi/2\) matrix \(A_N\) have the same characteristic polynomial.  Since
\(N\) is even, scaling the spectral parameter gives
\eqref{eq:B-characteristic-scaling}.  The two diagonal lines and the stated
factorizations now follow from \Cref{thm:intro-scottish}.

If \(4\mid N\), the conjugated matrix is itself a translated periodic matrix,
so its zero Jordan form is the one in \Cref{thm:intro-scottish}.  If
\(N\equiv2\pmod4\), the zero eigenvalue has algebraic multiplicity two by the
characteristic identity and geometric multiplicity two by
\Cref{prop:antiperiodic-zero}.  Its two Jordan blocks at zero therefore both have size one, giving
\(J_1(0)\oplus J_1(0)\).

For odd \(N\), all assertions follow from \Cref{thm:odd-scottish}, with
\(\sigma_{j,N}=16\eta_{j,N}\).
\end{proof}

\section{Limiting eigenvalue distribution}\label{sec:distribution}

We need the following limit for real symmetric tridiagonal matrices, together with its quantitative form.  Let
\begin{equation*}
 J_n=\begin{pmatrix}
 b_{1,n}&a_{1,n}\\
 a_{1,n}&b_{2,n}&a_{2,n}\\
 &\ddots&\ddots&\ddots\\
 &&a_{n-1,n}&b_{n,n}
 \end{pmatrix},
 \qquad a_{j,n}\ge0.
\end{equation*}
For \(r\geq1\) and \(1\leq j\leq n\), let
\(\mathcal W_r(j)\) be the set of sequences
\[
(j_0,\ldots,j_r),\qquad
j_0=j_r=j,\qquad |j_{\ell+1}-j_\ell|\leq1.
\]
The weight of such a walk is
\[
w(j_0,\ldots,j_r)
:=\prod_{\ell=0}^{r-1}(J_n)_{j_\ell,j_{\ell+1}}.
\]
Then
\[
(J_n^r)_{jj}
=\sum_{\gamma\in\mathcal W_r(j)}w(\gamma).
\]

\begin{proposition}\label{prop:Jacobi-symbol}
Suppose \(a,b\in C([0,1];\R)\) and
\begin{equation*}
 \max_j\left|a_{j,n}-a(j/n)\right|
 +\max_j\left|b_{j,n}-b(j/n)\right|\longrightarrow0.
\end{equation*}
Then, for every continuous \(f\),
\begin{equation}
\label{eq:Jacobi-limit}
 \frac1n\tr f(J_n)\longrightarrow
 \frac1\pi\int_0^1\int_0^\pi
 f\bigl(b(x)+2a(x)\cos\vartheta\bigr)\,d\vartheta\,dx.
\end{equation}
\end{proposition}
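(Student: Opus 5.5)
The plan is the method of moments followed by Weierstrass approximation. The hypotheses force $\|J_n\|$ to be uniformly bounded, say $\|J_n\|\le C:=2\|a\|_\infty+\|b\|_\infty+1$ for large $n$, since the norm of a tridiagonal matrix is at most $\max|b_{j,n}|+2\max a_{j,n}$. So all spectra lie in $[-C,C]$, and it suffices to prove \eqref{eq:Jacobi-limit} for monomials $f(\lambda)=\lambda^r$. A uniform polynomial approximation of $f$ on $[-C,C]$ then gives the general case, because both sides of \eqref{eq:Jacobi-limit} are averages against probability measures supported in $[-C,C]$; the right side is such an average because $|b(x)+2a(x)\cos\vartheta|\le C$.

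For $f(\lambda)=\lambda^r$, use the walk expansion $(J_n^r)_{jj}=\sum_{\gamma\in\mathcal W_r(j)}w(\gamma)$. Every walk of length $r$ starting at $j$ stays within $\{j-r,\dots,j+r\}$.

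First, compare each weight with the frozen weight in which every entry $(J_n)_{j_\ell,j_{\ell+1}}$ is replaced by $b(j/n)$ on the diagonal and by $a(j/n)$ off the diagonal. The error per entry is at most $\max_j|a_{j,n}-a(j/n)|+\max_j|b_{j,n}-b(j/n)|+\omega(r/n)$, where $\omega$ is a common modulus of continuity of $a$ and $b$. Since the number of walks is at most $3^r$ and each factor is bounded by $C$, the total error in $\frac1n\tr J_n^r$ is $o(1)$.

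Walks from sites $j$ within distance $r$ of the boundary may be truncated. These contribute $O(r\,3^rC^r/n)$ and can be discarded.

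For an interior $j$, the frozen sum equals $(J^{\mathrm{free}})^r_{00}$ for the doubly infinite constant Jacobi matrix with diagonal $b(j/n)$ and off-diagonal $a(j/n)$. By Fourier transform on $\ell^2(\mathbb Z)$, this matrix is multiplication by $b+2a\cos\vartheta$, so
\[
(J^{\mathrm{free}})^r_{00}=\frac1\pi\int_0^\pi\bigl(b(j/n)+2a(j/n)\cos\vartheta\bigr)^r\,d\vartheta ,
\]
using evenness in $\vartheta$ to restrict to $[0,\pi]$.

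Averaging over $j$ gives a Riemann sum of the continuous function $x\mapsto\frac1\pi\int_0^\pi(b(x)+2a(x)\cos\vartheta)^r\,d\vartheta$, which converges to the right side of \eqref{eq:Jacobi-limit}.

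The main obstacle is only bookkeeping: the freezing step has to be uniform in $j$. Uniform continuity of $a$ and $b$ on $[0,1]$ supplies that uniformity, and the counting bound $3^r$ on the number of walks controls everything else. The quantitative companion is read off from the same estimates. If $a$ and $b$ are Lipschitz and the discretization errors are $O(1/n)$, each moment error is $O_r(1/n)$: boundary terms, freezing and the Riemann sum each contribute $O(1/n)$.
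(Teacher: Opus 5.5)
Your proposal is correct and takes essentially the same route as the paper. Both arguments expand $(J_n^r)_{jj}$ as a sum over closed walks, freeze the coefficients uniformly in the interior using the $3^r$ bound on walks, discard the $O(r)$ boundary sites, pass to a Riemann sum, and finish with Weierstrass approximation on a common spectral interval. The only difference is cosmetic: you evaluate the frozen walk sum through the Fourier symbol of the free Jacobi operator on $\ell^2(\mathbb Z)$, whereas the paper reads it off as the constant Laurent coefficient $[z^0]\bigl(b+a(z+z^{-1})\bigr)^r$, which is the same computation.
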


\begin{proof}
We first prove the statement for monomials
\[
    f(t)=t^p, \qquad p\in\mathbb Z_{\geq 0}.
\]
The case \(p=0\) is immediate, so assume \(p\geq1\).

For \(1\leq j\leq n\), matrix multiplication gives
\[
    (J_n^p)_{jj}
    =
    \sum_{j_1,\ldots,j_{p-1}}
    (J_n)_{j,j_1}(J_n)_{j_1,j_2}\cdots
    (J_n)_{j_{p-1},j}.
\]
Since \(J_n\) is tridiagonal, a term in this sum can be nonzero only
when two consecutive indices differ by at most one. Thus
\((J_n^p)_{jj}\) is the sum of the weights of all closed walks
\[
    j=j_0,j_1,\ldots,j_p=j,
    \qquad |j_{\ell+1}-j_\ell|\leq1,
\]
of length \(p\) based at \(j\), as we discussed just before the statement of the proposition. A step \(k\to k+1\) or
\(k+1\to k\) contributes the corresponding off-diagonal coefficient
\(a_{k,n}\), while a step \(k\to k\) contributes \(b_{k,n}\).

Fix \(p\). A walk of length \(p\) based at \(j\) only visits indices
within distance \(p\) of \(j\). Put
\[
    x_j:=\frac{j}{n}.
\]
For every fixed \(|\ell|\leq p\), the assumptions and the uniform
continuity of \(a\) and \(b\) give
\[
    a_{j+\ell,n}=a(x_j)+o(1),
    \qquad
    b_{j+\ell,n}=b(x_j)+o(1),
\]
uniformly in \(j\), as long as the displayed indices lie in
\(\{1,\ldots,n\}\). Indeed,
\[
    \left|a_{j+\ell,n}-a(x_j)\right|
    \leq
    \left|a_{j+\ell,n}
          -a\!\left(\frac{j+\ell}{n}\right)\right|
    +
    \left|a\!\left(\frac{j+\ell}{n}\right)-a(x_j)\right|,
\]
and both terms tend to zero uniformly in \(j\); the same argument
applies to \(b\).

Suppose now that \(p<j<n-p\), so that no walk of length \(p\) based
at \(j\) reaches the boundary. Replacing all coefficients encountered
by such a walk by \(a(x_j)\) and \(b(x_j)\) changes its weight by
\(o(1)\), uniformly in \(j\). There are at most \(3^p\) possible
step sequences, so
\begin{equation}
\label{eq:local-walk-approximation}
    (J_n^p)_{jj}
    =
    [z^0]\bigl(
        b(x_j)+a(x_j)(z+z^{-1})
    \bigr)^p
    +o(1),
\end{equation}
uniformly for \(p<j<n-p\).

To see the appearance of the constant Laurent coefficient, associate
the factors $a(x_j)z, b(x_j), a(x_j)z^{-1}$
with a step to the right, a stationary step, and a step to the left,
respectively. A term corresponding to a walk with \(r_+\) right steps
and \(r_-\) left steps contains the factor \(z^{r_+-r_-}\). The walk
is closed exactly when \(r_+=r_-\), so the sum of the weights of the
closed walks is precisely the coefficient of \(z^0\).

For any Laurent polynomial \(F\),
\[
    [z^0]F(z)
    =
    \frac{1}{2\pi}\int_0^{2\pi}F(e^{i\vartheta})\,d\vartheta.
\]
Since
\[
    e^{i\vartheta}+e^{-i\vartheta}
    =2\cos\vartheta,
\]
we obtain
\begin{align*}
    [z^0]\bigl(
        b(x)+a(x)(z+z^{-1})
    \bigr)^p
    &=
    \frac{1}{2\pi}
    \int_0^{2\pi}
    \bigl(b(x)+2a(x)\cos\vartheta\bigr)^p
    \,d\vartheta
    =
    \frac{1}{\pi}
    \int_0^\pi
    \bigl(b(x)+2a(x)\cos\vartheta\bigr)^p
    \,d\vartheta.
\end{align*}
Define
\[
    F_p(x)
    :=
    \frac{1}{\pi}
    \int_0^\pi
    \bigl(b(x)+2a(x)\cos\vartheta\bigr)^p
    \,d\vartheta.
\]
Then \eqref{eq:local-walk-approximation} says
\[
    (J_n^p)_{jj}=F_p(j/n)+o(1)
\]
uniformly for \(p<j<n-p\).

It remains to deal with the boundary indices. The assumptions imply
that the matrices \(J_n\) are uniformly bounded in operator norm.
Hence, for fixed \(p\),
\[
    |(J_n^p)_{jj}|
    \leq \|J_n\|^p
    \leq C_p
\]
uniformly in \(n\) and \(j\). There are only \(\mathcal O(p)\) indices within
distance \(p\) of the two endpoints, so their total contribution to
\(\operatorname{tr}J_n^p\) is \(\mathcal O(1)=o(n)\). Therefore
\[
    \frac{1}{n}\operatorname{tr}J_n^p
    =
    \frac{1}{n}\sum_{j=1}^n F_p(j/n)+o(1).
\]
Since \(F_p\) is continuous on \([0,1]\), the sum on the right is a
Riemann sum, and hence
\[
    \frac{1}{n}\operatorname{tr}J_n^p
    \longrightarrow
    \int_0^1 F_p(x)\,dx
    =
    \frac{1}{\pi}
    \int_0^1\int_0^\pi
    \bigl(b(x)+2a(x)\cos\vartheta\bigr)^p
    \,d\vartheta\,dx.
\]
By linearity, \eqref{eq:Jacobi-limit} follows for every polynomial.

Finally, the assumptions imply that $\sup_n\|J_n\|<\infty.$
Thus there is \(R>0\) such that $\operatorname{spec}(J_n)\subset[-R,R]$
for every \(n\), and also $b(x)+2a(x)\cos\vartheta\in[-R,R]$
after increasing \(R\) if necessary. Let \(f\in C([-R,R])\).
By the Weierstrass approximation theorem, for every
\(\varepsilon>0\) there is a polynomial \(P\) such that
\[
    \sup_{|t|\leq R}|f(t)-P(t)|<\varepsilon.
\]
It follows that
\[
    \left|
    \frac1n\operatorname{tr}f(J_n)
    -
    \frac1n\operatorname{tr}P(J_n)
    \right|
    \leq \varepsilon
\text{ while }
    \left|
    \frac1\pi\int_0^1\int_0^\pi
    \bigl(f-P\bigr)
    \bigl(b(x)+2a(x)\cos\vartheta\bigr)
    \,d\vartheta\,dx
    \right|
    \leq\varepsilon.
\]
Since the desired limit is already known for \(P\), letting
\(n\to\infty\) and then \(\varepsilon\to0\) proves
\eqref{eq:Jacobi-limit} for every continuous \(f\).
\end{proof}
A more quantitative version of this result is:
\begin{lemma}\label{lem:quantitative-Jacobi}
Fix \(p\in\Z_{\ge0}\).
\begin{enumerate}
\item If \(a,b\in C^1([0,1])\) and $\max_j|a_{j,n}-a(j/n)|+
 \max_j|b_{j,n}-b(j/n)|=\mathcal  O(n^{-1}),$
then
\begin{equation}
\label{eq:quantitative-general}
 \frac{\tr J_n^p}{n}=
 \int_0^1\int_0^\pi
 \bigl(b(x)+2a(x)\cos\vartheta\bigr)^p\frac{d\vartheta\,dx}{\pi}
 +\mathcal  O(n^{-1}).
\end{equation}
\item Suppose \(b_{j,n}=0\), \(q\in C^1([0,1])\), \(q\ge0\), and
\begin{equation}
\label{eq:edge-square-profile}
 \max_j|a_{j,n}^2-q(j/n)|=\mathcal  O(n^{-1}).
\end{equation}
Then \(\tr J_n^{2p+1}=0\) and
\begin{equation}
\label{eq:quantitative-edge-square}
 \frac{\tr J_n^{2p}}{n}
 =\int_0^1\int_0^\pi \bigl(2\sqrt{q(x)}\cos\vartheta\bigr)^{2p}\,\frac{d\vartheta\,dx}{\pi}
 +\mathcal  O(n^{-1}).
\end{equation}
\end{enumerate}
The asymptotic formulas \eqref{eq:quantitative-general} and
\eqref{eq:quantitative-edge-square} remain valid under uniformly bounded
perturbations of rank \(\mathcal  O(1)\).  For the unperturbed zero-diagonal
matrix all odd traces vanish exactly; after such a perturbation \(E_n\),
\begin{equation}
\label{eq:odd-perturbed}
 \frac{\tr(J_n+E_n)^{2p+1}}{n}=\mathcal  O(n^{-1}).
\end{equation}
\end{lemma}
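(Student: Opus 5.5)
We prove \Cref{lem:quantitative-Jacobi} by making the walk expansion from the proof of \Cref{prop:Jacobi-symbol} quantitative. Fix \(p\). For a base point \(j\) with \(p<j<n-p\), every closed walk of length \(p\) based at \(j\) only meets coefficients \(a_{j+\ell,n},b_{j+\ell,n}\) with \(|\ell|\le p\). Under the hypothesis of part (1), and because \(a,b\in C^1\), the mean value theorem gives
\[
|a_{j+\ell,n}-a(j/n)|\le |a_{j+\ell,n}-a((j+\ell)/n)|+\|a'\|_\infty |\ell|/n=\mathcal O(n^{-1}),
\]
uniformly in \(j\). The same bound holds for \(b\). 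All coefficients are uniformly bounded, so replacing the coefficients of a walk changes its weight by \(\mathcal O(n^{-1})\), by telescoping over at most \(p\) factors. There are at most \(3^p\) walks, hence
\[
(J_n^p)_{jj}=F_p(j/n)+\mathcal O(n^{-1}),
\]
with \(F_p\) as in the proof of \Cref{prop:Jacobi-symbol}. The \(\mathcal O(p)\) boundary indices each contribute \(\mathcal O(1)\) because \(\sup_n\|J_n\|<\infty\). After dividing by \(n\), this gives \(\frac1n\tr J_n^p=\frac1n\sum_j F_p(j/n)+\mathcal O(n^{-1})\). Finally, \(F_p\in C^1([0,1])\), being a polynomial expression in \(a\) and \(b\) integrated in \(\vartheta\). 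The Riemann sum error for a \(C^1\) (indeed Lipschitz) function is \(\mathcal O(n^{-1})\), which proves \eqref{eq:quantitative-general}.

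For part (2), odd traces vanish because a closed walk with no stationary steps has even length; equivalently, the zero diagonal gives the bipartite grading \(PJ_nP=-J_n\) with \(P=\diag((-1)^j)\). For even length \(2p\), each walk crosses each edge an even number of times, so its weight is a product of squares \(a_{k,n}^2\). This is the point of the hypothesis: \(\sqrt q\) need not be \(C^1\), but only \(a_{k,n}^2\) enters. We therefore rerun the argument above with \(a_{j+\ell,n}^2=q(j/n)+\mathcal O(n^{-1})\), which follows from \(q\in C^1\) and \eqref{eq:edge-square-profile}. The local value becomes
\[
[z^0]\bigl(\sqrt{q(x)}(z+z^{-1})\bigr)^{2p}=\binom{2p}{p}q(x)^p,
\]
a \(C^1\) function of \(x\), and the Riemann sum step again costs \(\mathcal O(n^{-1})\). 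Rewriting \(\binom{2p}{p}q^p\) as the \(\vartheta\)-integral gives \eqref{eq:quantitative-edge-square}.

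For the perturbation statement, let \(E_n\) have rank at most \(r\) and \(\|E_n\|\le C\). Expand \((J_n+E_n)^p-J_n^p\) as a sum of \(2^p-1\) words, each containing at least one factor \(E_n\). Each such word has rank at most \(r\) and norm at most \((\|J_n\|+C)^p\). Its trace is therefore bounded by \(r(\|J_n\|+C)^p=\mathcal O(1)\), using \(|\tr X|\le\rank(X)\|X\|\). Dividing by \(n\), the change is \(\mathcal O(n^{-1})\), so both asymptotic formulas survive. Combined with \(\tr J_n^{2p+1}=0\), this gives \eqref{eq:odd-perturbed}.

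The only genuinely delicate step is part (2), namely keeping track of squared weights so that no regularity of \(\sqrt q\) is needed. The remaining steps are routine bookkeeping of uniform \(\mathcal O(n^{-1})\) errors over finitely many walks.
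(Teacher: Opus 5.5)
Your proposal is correct and follows the paper's proof essentially step for step: the mean-value estimate on the nearby coefficients $a_{j+\ell,n},b_{j+\ell,n}$ with $|\ell|\le p$, the telescoping over at most $3^p$ walk patterns, the boundary count and the $C^1$ Riemann-sum bound, then the parity grading and the squared-edge-weight observation for part (2), and finally $|\tr X|\le\rank(X)\|X\|$ for the bounded-rank perturbation. The only cosmetic difference is that you expand $(J_n+E_n)^p-J_n^p$ into $2^p-1$ words, whereas the paper uses the $p$-term telescoping identity; this does not affect the argument.
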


\begin{proof}
Let \(r\) denote the power under consideration.  For an interior walk
\(\gamma\in\mathcal W_r(j)\), every visited index has the form \(j+\ell\)
with \(|\ell|\le r\).  The assumptions and the mean-value theorem give
\begin{align*}
 a_{j+\ell,n}=a(j/n)+\mathcal  O(n^{-1}), \ b_{j+\ell,n}=b(j/n)+\mathcal  O(n^{-1}),
 \qquad |\ell|\le r,
\end{align*}
uniformly in \(j\).  If \(x_1,\ldots,x_r\) and
\(y_1,\ldots,y_r\) are uniformly bounded, then
\begin{equation*}
 \prod_{k=1}^r x_k-\prod_{k=1}^r y_k
 =\sum_{k=1}^r
   \left(\prod_{\ell<k}x_\ell\right)(x_k-y_k)
   \left(\prod_{\ell>k}y_\ell\right).
\end{equation*}
Hence replacing nearby coefficients by their value at \(j/n\) in one walk changes its weight by
\(\mathcal  O(n^{-1})\).  Since the number of possible step patterns is at most \(3^r\),
\begin{equation*}
 (J_n^r)_{jj}
 =\frac1\pi\int_0^\pi
   \bigl(b(j/n)+2a(j/n)\cos\vartheta\bigr)^r
   \,d\vartheta+\mathcal  O(n^{-1})
\end{equation*}
uniformly for \(r<j<n-r\).  The boundary contains at most \(2r\) indices,
so its normalized contribution is \(\mathcal  O(n^{-1})\).  Define
\[
 F(x):=\frac1\pi\int_0^\pi
 \bigl(b(x)+2a(x)\cos\vartheta\bigr)^r\,d\vartheta.
\]
Then \(F\in C^1([0,1])\), and the first-order Riemann-sum estimate gives
\begin{equation*}
 \frac1n\sum_{j=1}^n F(j/n)
 =\int_0^1F(x)\,dx+\mathcal  O(n^{-1}).
\end{equation*}
Combining the last three displays proves \eqref{eq:quantitative-general}.

Suppose now that the diagonal is zero.  With $\Xi=\diag(1,-1,1,-1,\ldots),$ we have \(\Xi J_n\Xi=-J_n\), and therefore $\tr J_n^{2p+1}
 =\tr(\Xi J_n^{2p+1}\Xi)
 =-\tr J_n^{2p+1}=0.$
For an even closed walk, let \(N_e^+(\gamma)\) and
\(N_e^-(\gamma)\) be the numbers of crossings of edge \(e\) in the two
directions.  Closedness implies
\begin{equation*}
 N_e^+(\gamma)=N_e^-(\gamma)
 \qquad\text{for every edge }e.
\end{equation*}
Thus its weight is
\begin{equation*}
 \prod_e a_{e,n}^{N_e^+(\gamma)+N_e^-(\gamma)}
 =\prod_e\bigl(a_{e,n}^2\bigr)^{N_e^+(\gamma)}.
\end{equation*}
The hypothesis \eqref{eq:edge-square-profile} therefore gives the same
\(\mathcal  O(n^{-1})\) estimate without requiring \(\sqrt q\) to be
\(C^1\) at its zeros.  Finally,
\begin{align*}
 \int_0^\pi(2\sqrt q\cos\vartheta)^{2p}\,\frac{d\vartheta}{\pi}
 &=4^pq^p\int_0^\pi\cos^{2p}\vartheta\,\frac{d\vartheta}{\pi}=\binom{2p}{p}q^p,
\end{align*}
which proves \eqref{eq:quantitative-edge-square}.

Let now \(E_n\) have uniformly bounded norm and rank.  For every fixed
integer \(r\ge1\),
\begin{equation*}
 (J_n+E_n)^r-J_n^r
 =\sum_{k=0}^{r-1}(J_n+E_n)^kE_nJ_n^{r-1-k}.
\end{equation*}
Each summand has rank at most \(\rank E_n=\mathcal  O(1)\) and uniformly bounded norm.
Using \(|\tr X|\le\rank(X)\|X\|\),
\begin{equation*}
 \left|\tr(J_n+E_n)^r-\tr J_n^r\right|=\mathcal  O(1).
\end{equation*}
Division by \(n\) proves stability of the normalized asymptotics.  Taking
\(r=2p+1\) and using the exact vanishing above gives
\eqref{eq:odd-perturbed}.
\end{proof}

\begin{lemma}
\label{lem:rank-BV}
Let \(H,K\in\mathbb R^{n\times n}\) be symmetric, suppose
\(\operatorname{rank}(H-K)\le r\), and assume
\(\spec(H)\cup\spec(K)\subset[-R,R]\).  If
\(f\colon[-R,R]\to\mathbb C\) is absolutely continuous, then
\[
 \left|\tr f(H)-\tr f(K)\right|
 \le r\int_{-R}^{R}|f'(t)|\,dt.
\]
In particular, for each fixed \(m\ge1\), the functions
\[
 f_m^+(x)=(x_+)^m,
 \qquad
 f_m^-(x)=((-x)_+)^m,
 \qquad x_+:=\max\{x,0\},
\]
are absolutely continuous with
\(\int_{-R}^{R}|(f_m^\pm)'(t)|\,dt\le R^m\).  Hence replacing a
Hermitian matrix by a uniformly bounded rank-\(\mathcal  O(1)\) perturbation
changes the corresponding normalized traces by \(\mathcal  O(n^{-1})\),
provided the two spectra remain in a common bounded interval.
\end{lemma}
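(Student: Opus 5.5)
The plan is to prove the trace inequality by interlacing and then specialize it to $f_m^\pm$. First reduce to rank one. Write $H-K=\sum_{i=1}^{r}\lambda_i v_iv_i^T$ with real $\lambda_i$ and orthonormal real $v_i$, using the spectral theorem for the real symmetric matrix $H-K$ of rank at most $r$. Set $H_0=K$ and $H_i=H_{i-1}+\lambda_i v_iv_i^T$, so that $H_r=H$.

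Each intermediate $H_i$ is real symmetric. It need not have spectrum in $[-R,R]$, however. So we either extend $f$ to a larger interval, or we avoid intermediate matrices altogether by a single interlacing argument for rank $r$. I would take the second route, since it keeps every spectrum inside the common interval.

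The key step is the rank-$r$ interlacing inequality. Let $\lambda_1\le\dots\le\lambda_n$ be the eigenvalues of $H$ and $\mu_1\le\dots\le\mu_n$ those of $K$. The Courant--Fischer min-max principle, applied on the codimension-$r$ subspace $\ker(H-K)$, gives
\[
\mu_{k-r}\le\lambda_k\le\mu_{k+r},
\]
with the convention that out-of-range indices impose no constraint. Equivalently, the eigenvalue counting functions $n_H(t)=\#\{k:\lambda_k\le t\}$ and $n_K(t)$ satisfy $|n_H(t)-n_K(t)|\le r$ for all $t$.

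Next express the traces through these counting functions. By Stieltjes integration by parts on $[-R,R]$,
\[
\tr f(H)-\tr f(K)=\int_{[-R,R]}f\,d(n_H-n_K).
\]
Both counting functions equal $0$ just left of $-R$ and equal $n$ at $R$. The boundary terms therefore cancel, and we are left with
\[
\tr f(H)-\tr f(K)=-\int_{-R}^{R}f'(t)\bigl(n_H(t)-n_K(t)\bigr)\,dt.
\]
This integration by parts is legitimate because $f$ is absolutely continuous and $n_H-n_K$ is a bounded step function. Taking absolute values and inserting $|n_H-n_K|\le r$ gives the bound $r\int_{-R}^{R}|f'(t)|\,dt$. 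For complex-valued $f$, apply the argument to the real and imaginary parts separately, or directly, since the identity above is linear in $f$.

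The examples follow by direct computation. The function $f_m^+$ is $C^1$ for $m\ge2$ and Lipschitz for $m=1$, hence absolutely continuous in both cases. Its total variation on $[-R,R]$ is $\int_0^R m t^{m-1}\,dt=R^m$, and the same holds for $f_m^-$ by symmetry. For the final sentence, apply the inequality with $r=\mathcal O(1)$ and divide by $n$; the uniform bound on the perturbation supplies a fixed $R$.

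The main obstacle is the interlacing step and the bookkeeping of boundary terms in the integration by parts. Concretely, one must verify $|n_H-n_K|\le r$ carefully from min-max. Take $V=\ker(H-K)$. If $H$ had $k+r$ eigenvalues at most $t$ while $K$ had fewer than $k$, then intersecting the corresponding $H$-eigenspace with $V$ would produce a subspace of dimension at least $k$ on which $x^TKx=x^THx\le t|x|^2$. This contradicts the min-max principle for $K$.
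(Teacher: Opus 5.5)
Your proposal is correct and follows the paper's proof: the counting-function bound $|N_H(t)-N_K(t)|\le r$, then integration by parts with vanishing boundary terms (both counting functions equal $n$ at $R$), then the direct computation $\int_0^R m t^{m-1}\,dt=R^m$. The only difference is that you derive the counting-function bound from Courant--Fischer on $\ker(H-K)$, where the paper cites it as the standard rank inequality; your derivation is sound, and the rank-one reduction you sketch at the start is unnecessary and can be dropped.
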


\begin{proof}
Let $N_\bullet(t)=\#\{j:\lambda_j(\bullet)\le t\}$ with
$\bullet\in\{K,H\}$.  The rank inequality for Hermitian matrices gives
\[
 |N_H(t)-N_K(t)|\le r,
 \qquad t\in\mathbb R.
\]
Since \(H\) and \(K\) have the same dimension, integration by parts gives
\[
 \tr f(H)-\tr f(K)
 =-\int_{-R}^{R}f'(t)\bigl(N_H(t)-N_K(t)\bigr)\,dt.
\]
Taking absolute values proves the stated estimate.  The final assertion
follows from the displayed derivative bound for \(f_m^\pm\).
\end{proof}
\begin{lemma}
\label{lem:limit-Jacobi-reduction}
Fix \(\varphi\in[-\pi,\pi]\), and let \(N\to\infty\) through even
integers.  After the reflection and odd--even decompositions, the
spectrum of \(A_N(\varphi,0)\), apart from \(\mathcal O(1)\) zero
eigenvalues, is obtained from real symmetric matrices
\(H_{+,N}\) and \(H_{-,N}\) through
\[
    z^2=e^{i\varphi}\mu,
    \qquad
    \mu\in\spec(H_{+,N})\cup\spec(H_{-,N}).
\]
Moreover,
\[
    H_{+,N}\oplus H_{-,N}
    =
    K_N^+\oplus(-K_N^-)+R_N,
    \qquad
    \rank R_N=\mathcal O(1),
\]
where \(K_N^\pm\) are direct sums of a uniformly bounded number of
positive semidefinite Jacobi matrices with limiting coefficient
profiles
\[
    b_\pm(u)=2q_\pm(u),
    \qquad
    a_\pm(u)=q_\pm(u),
\]
with
\[
    q_+(u)=c_\varphi(u),\quad 0\le u\le u_0,
    \qquad
    q_-(u)=-c_\varphi(u),\quad u_0\le u\le\pi.
\]
\end{lemma}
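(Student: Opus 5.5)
We work in the quadratic-phase basis \eqref{eq:parity-chirp-basis} with \(N=2M\), \(\vartheta=0\), and write \(u_k:=(2k+1)\pi/N\), so that \(\rho_k=\e^{i\varphi}c_k\) with \(c_k=c_\varphi(u_k)\) and \(c_\varphi(u)=\tfrac12(\cos\varphi+\cos u)\), by \eqref{eq:general-phase-product}. On \((0,\pi)\) the profile \(c_\varphi\) is strictly decreasing and vanishes only at \(u_0=\pi-|\varphi|\). The plan is to build \(H_{\pm,N}\) directly from the two reflection blocks, compare them with sign-definite Jacobi matrices up to bounded-rank errors, and then read off the coefficient profiles.

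\emph{Construction of \(H_{\pm,N}\).} First apply the reflection splitting of \Cref{sub:aligned}: \(L=\widehat A_N(\varphi,0)=L_+\oplus L_-\), with the path product lists \eqref{eq:fold-plus-products}--\eqref{eq:fold-minus-products}. If some \(c_k\) vanishes exactly (when \(u_0\) hits a grid point), cut the path there as in the proof of \Cref{thm:one-sign-change}; this creates at most \(\mathcal O(1)\) extra components. On each component, symmetrize via \Cref{lem:diagonal-similarity}, taking
\[
 w_j=\e^{i\varphi/2}\sqrt{c_j}\quad (c_j>0),
 \qquad
 w_j=i\e^{i\varphi/2}\sqrt{|c_j|}\quad (c_j<0),
\]
with the endpoint factors \(2\) absorbed into \(c_j\). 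Then apply \Cref{prop:odd-even}, and take for \(K\) the Gram matrix, \(B^TB\) or \(BB^T\), whose off-diagonal entries avoid the unique sign change. By \eqref{eq:Schur} the choice does not affect the nonzero spectrum, and every nonzero eigenvalue \(z\) of \(L_\pm\) satisfies \(z^2\in\spec K\). Define \(H_{\pm,N}:=\e^{-i\varphi}K\) (a direct sum over the components). This is real symmetric by \eqref{eq:gram-entries}. The zero eigenvalues that are lost come from the dimension defects \(n-2s\) in \eqref{eq:Schur} and from the \(\mathcal O(1)\) components, so there are only \(\mathcal O(1)\) of them.

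\emph{Sign structure and the perturbation \(R_N\).} Read the entries of \(H_{\pm,N}\) off \eqref{eq:gram-entries}. On the part of the chain where \(c>0\), the diagonal entries are \(c_j+c_{j+1}\) and the off-diagonal entries are \(\sqrt{c_jc_{j+1}}\). Where \(c<0\), the diagonal entries are \(-(|c_j|+|c_{j+1}|)\) and the off-diagonal entries are \(-\sqrt{|c_jc_{j+1}|}\), because \(i^2=-1\). Now let \(R_N\) delete the single diagonal entry that mixes the two signs, together with the \(\mathcal O(1)\) couplings at the fold endpoints and at any cut, and undo the factors \(2\). This is a rank-\(\mathcal O(1)\), uniformly bounded change. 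After it, each \(H_{\pm,N}\) splits as a positive block plus a negative block. Each positive block is the real Gram matrix \(B^TB\) of a real bidiagonal matrix with entries \(\sqrt{|c_j|}\), up to the \(\mathcal O(1)\) boundary corrections just removed, and is therefore positive semidefinite; call the sum of these blocks \(K_N^+\). Likewise each negative block is \(-(\text{such a Gram matrix})\), which gives \(-K_N^-\). Signs of off-diagonal entries can be flipped by a diagonal \(\pm1\) similarity if desired, since traces are unaffected.

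\emph{Coefficient profiles.} Consecutive Gram sites correspond to two steps of \(k\), that is, to an increment \(4\pi/N\) in \(u\). Parametrizing each block by \(u\in[0,\pi]\) (resp.\ its scaled index) and using the smoothness of \(c_\varphi\), we get \(c_j+c_{j+1}=2q_\pm(u)+\mathcal O(N^{-1})\) and \(\sqrt{|c_jc_{j+1}|}=q_\pm(u)+\mathcal O(N^{-1})\), with \(q_+=c_\varphi\) on \([0,u_0]\) and \(q_-=-c_\varphi\) on \([u_0,\pi]\). This gives \(b_\pm=2q_\pm\) and \(a_\pm=q_\pm\). Equivalently, \(a^2\) matches \(q^2\) to order \(N^{-1}\), which is the form needed later in \Cref{lem:quantitative-Jacobi}.

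\emph{Main obstacle.} The difficulty is bookkeeping rather than analysis. Which of \(B^TB\) and \(BB^T\) avoids the crossing depends on the parity of the crossing index \(k_0\approx Nu_0/(2\pi)\), and this parity may differ between \(L_+\) and \(L_-\). One also has to track the doubled endpoint products and the possible exact zero of \(c_\varphi\) at a grid point. I would handle all of this by treating every deviation from the clean \(u\)-profile as an element of \(R_N\) or as an extra component, and then check that only a bounded number of entries are affected uniformly in \(N\) and \(\varphi\).
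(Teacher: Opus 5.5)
Your proposal is essentially the paper's proof: reflection splitting, cutting at an exact zero of \(c_\varphi\), symmetrizing, keeping the Gram matrix whose off-diagonal entries avoid the sign change, removing a bounded-rank piece to reach \(K_N^+\oplus(-K_N^-)\), and reading off the profiles \(2q_\pm,q_\pm\) from \eqref{eq:gram-entries} at mesh \(4\pi/N\); your Gram-matrix argument for semidefiniteness is more explicit than the paper's. One detail needs repair: deleting only the mixed diagonal entry does not split the matrix, and cutting one adjacent coupling afterwards would leave a zero diagonal entry next to a nonzero off-diagonal entry, which no positive semidefinite matrix allows, so either zero the whole mixed row and both its couplings as the paper does, or use the exact identity \(\e^{-i\varphi}B^TB=\widetilde B_+^T\widetilde B_+-\widetilde B_-^T\widetilde B_-\) (similarly for \(BB^T\)), where \(\widetilde B_\pm\) are the real bidiagonal matrices carrying \(\sqrt{|c_j|}\) on the two sign regions, and then drop the single overlapping row from one of the two terms.
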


\begin{proof}
Set
\[
 u_{k,N}:=\frac{(2k+1)\pi}{N},
 \qquad
 c_{k,N}:=c_\varphi(u_{k,N}),
 \qquad
 d_{k,N}:=|c_{k,N}|,
 \qquad 0\leq k<M.
\]
Since \(c_\varphi\) is strictly decreasing on \([0,\pi]\) and vanishes
at \(u_0\), at most one of the \(c_{k,N}\) is zero.

Suppose first that an edge product in one of the reflection blocks
vanishes.  Replace the two entries on that edge by zero.  By the
continuant recurrence \eqref{eq:continuant}, this leaves the
characteristic polynomial unchanged and merely cuts the path into
two components.  We make this replacement whenever necessary, so
that every component subsequently symmetrized has nonzero edge
products.

Apply
\Cref{lem:diagonal-similarity,prop:odd-even}
to each component.  If there is no zero product, choose, as in the
proof of \Cref{thm:one-sign-change}, the one of \(B^TB\) and \(BB^T\)
whose off-diagonal entries avoid the unique sign change.  If a zero
edge was cut, the resulting components have constant sign, and we
choose the two possible parity pairings in the two reflection
sectors.

For \(\sigma\in\{+,-\}\), denote the resulting matrix, or direct sum
of matrices, by \(G_{\sigma,N}\).  All its entries have the common
factor \(e^{i\varphi}\), so write
\[
 G_{\sigma,N}
 =
 e^{i\varphi}H_{\sigma,N},
 \qquad
 H_{\sigma,N}=H_{\sigma,N}^T
 \in\mathbb R^{d_{\sigma,N}\times d_{\sigma,N}}.
\]
The odd--even decomposition changes the dimension only by a bounded
amount, hence
\[
 d_{\sigma,N}
 =
 \frac{n_{\sigma,N}}2+\mathcal O(1).
\]
Apart from the \(\mathcal O(1)\) zero eigenvalues caused by
rectangular odd--even blocks, \Cref{prop:odd-even} gives
\[
 z^2=e^{i\varphi}\mu,
 \qquad
 \mu\in\spec(H_{\sigma,N}).
\]
Thus positive \(\mu\) produces
\[
 z=\pm e^{i\varphi/2}\sqrt{\mu},
\]
whereas negative \(\mu\) produces
\[
 z=
 \pm e^{i(\varphi/2+\pi/2)}\sqrt{-\mu}.
\]

We next describe the entries of \(H_{\sigma,N}\).  Consider a maximal
run of nonzero edge factors having one sign,
\[
 I_N=\{j,j+1,\ldots,j+\ell-1\},
\]
and put
\[
 e_{r,N}:=d_{j+r-1,N},
 \qquad 1\leq r\leq\ell,
 \qquad
 e_{0,N}=e_{\ell+1,N}:=0.
\]
By \eqref{eq:gram-entries}, after removing the common factor
\(e^{i\varphi}\), the two possible odd--even matrices have entries
\begin{align}
 (K_N^{(0)})_{r,r}
 &=
 e_{2r-1,N}+e_{2r,N},
 &
 (K_N^{(0)})_{r,r+1}
 &=
 \sqrt{e_{2r,N}e_{2r+1,N}},
 \label{eq:limit-parity-zero}\\
 (K_N^{(1)})_{r,r}
 &=
 e_{2r-2,N}+e_{2r-1,N},
 &
 (K_N^{(1)})_{r,r+1}
 &=
 \sqrt{e_{2r-1,N}e_{2r,N}}.
 \label{eq:limit-parity-one}
\end{align}
The convention \(e_{0,N}=e_{\ell+1,N}=0\) is the endpoint convention
\(w_0=w_n=0\) in \Cref{prop:odd-even}.

The doubled products at the endpoints of the \(+1\)-reflection block
in \eqref{eq:fold-plus-products} affect only the endpoint rows.  If
the unique sign change lies between two nonzero products, there is
also exactly one row whose diagonal entry combines factors of
opposite sign.  Zeroing that row and its adjacent entries, while
retaining the resulting one-dimensional zero block, changes the
matrix by uniformly bounded rank.

Let \(K_{\sigma,N}^+\) and \(K_{\sigma,N}^-\) be the positive and
negative tridiagonal blocks obtained in this way, and set
\[
 H_{\sigma,N}^{(0)}
 :=
 K_{\sigma,N}^+\oplus(-K_{\sigma,N}^-).
\]
Then
\[
 K_{\sigma,N}^\pm\geq0,
 \qquad
 \rank\bigl(
 H_{\sigma,N}-H_{\sigma,N}^{(0)}
 \bigr)
 =\mathcal O(1).
\]

It remains to identify the coefficient profiles.  On a positive
component use \(q_+(u)=c_\varphi(u)\), and on a negative component
use \(q_-(u)=-c_\varphi(u)\).  If \(v_{r,N}\) is the midpoint of the
two adjacent sampling points appearing in the \(r\)-th diagonal
entry, then
\[
 u_{k+1,N}-u_{k,N}=\frac{2\pi}{N}.
\]
Since \(q_\pm\in C^1\), equations
\eqref{eq:limit-parity-zero}--\eqref{eq:limit-parity-one} imply
\[
 (K_{\sigma,N}^\pm)_{r,r}
 =
 2q_\pm(v_{r,N})+\mathcal O(N^{-1}),
\text{
and }
 (K_{\sigma,N}^\pm)_{r,r+1}
 =
 q_\pm(v_{r,N})+\mathcal O(N^{-1}),
\]
uniformly away from the finitely many endpoint rows already absorbed
into the bounded-rank correction.

The two parity pairings have mesh size \(4\pi/N\).  Hence a component
corresponding to a nonempty interval \(I=[a,b]\) has dimension
\[
 n_{\sigma,N,I}
 =
 \frac{N(b-a)}{4\pi}+\mathcal O(1),
\]
and thus
\[
 v_{r,N}
 =
 a+(b-a)\frac{r}{n_{\sigma,N,I}}
 +\mathcal O(N^{-1})
\]
uniformly in \(r\).  After the affine change
\(u=a+(b-a)x\), the limiting diagonal and off-diagonal profiles are
therefore
\[
 b_I(x)
 =
 2q_\pm(a+(b-a)x),
 \qquad
 a_I(x)
 =
 q_\pm(a+(b-a)x).
\]
Both belong to \(C^1([0,1])\).

Finally, let
\[
 r_{N,+}
 :=
 \#\{0\leq k<M:c_{k,N}>0\},
 \qquad
 r_{N,-}
 :=
 \#\{0\leq k<M:c_{k,N}<0\}.
\]
Since the sampling points have spacing \(2\pi/N\),
\[
 r_{N,+}
 =
 \frac{Nu_0}{2\pi}+\mathcal O(1),
 \qquad
 r_{N,-}
 =
 \frac{N(\pi-u_0)}{2\pi}+\mathcal O(1).
\]
The two parity pairings together use each edge factor once, up to
\(\mathcal O(1)\) endpoint effects.  Consequently
\[
 d_{N,+}
 =
 r_{N,+}+\mathcal O(1)
 =
 \frac{Nu_0}{2\pi}+\mathcal O(1)\text{ and }
 d_{N,-}
 =
 r_{N,-}+\mathcal O(1)
 =
 \frac{N(\pi-u_0)}{2\pi}+\mathcal O(1).
\]
\end{proof}

\begin{lemma}
\label{lem:limit-integral-form}
With the notation of \Cref{lem:limit-Jacobi-reduction}, for every
bounded continuous function \(f:\mathbb C\to\mathbb C\),
\begin{align}
 \lim_{N\to\infty}\int_{\mathbb C} f\,d\mu_{N,\varphi}
 &=
 \frac1{\pi^2}
 \int_0^{u_0}\int_0^\pi
 f\!\left(
 e^{i\varphi/2}
 2\sqrt{c_\varphi(u)}\cos\alpha
 \right)
 \,d\alpha\,du
 \notag\\
 &\quad+
 \frac1{\pi^2}
 \int_{u_0}^{\pi}\int_0^\pi
 f\!\left(
 e^{i(\varphi/2+\pi/2)}
 2\sqrt{-c_\varphi(u)}\cos\alpha
 \right)
 \,d\alpha\,du.
 \label{eq:limit-integral-form}
\end{align}
\end{lemma}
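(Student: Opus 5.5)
The plan is to reduce everything to moments of the real symmetric matrices $H_{\pm,N}$ from \Cref{lem:limit-Jacobi-reduction}, apply \Cref{prop:Jacobi-symbol} blockwise, and then undo the map $z^2=e^{i\varphi}\mu$. Since all spectra stay in a fixed disc ($\|A_N(\varphi,0)\|\le 2$), weak convergence tested against bounded continuous $f$ only needs $f$ continuous on a compact set.

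First I split $f$ along the two lines. Every eigenvalue lies on $\cX_{\varphi/2}$ by \Cref{thm:intro-unit-modulus}, so only the restrictions of $f$ to the two lines matter. The quantity $\int f\,d\mu_{N,\varphi}$ is then $\frac1N$ times a sum over $\mu\in\spec(H_{+,N})\cup\spec(H_{-,N})$, up to $\mathcal O(1)$ zero eigenvalues, which contribute $\mathcal O(N^{-1})$. Each $\mu$ produces the pair $z=\pm e^{i\varphi/2}\sqrt{\mu}$ when $\mu>0$ and $z=\pm e^{i(\varphi/2+\pi/2)}\sqrt{-\mu}$ when $\mu<0$. I therefore set
\[
 F(\mu):=f\bigl(e^{i\varphi/2}\sqrt{\mu_+}\bigr)+f\bigl(-e^{i\varphi/2}\sqrt{\mu_+}\bigr)
 +f\bigl(e^{i(\varphi/2+\pi/2)}\sqrt{\mu_-}\bigr)+f\bigl(-e^{i(\varphi/2+\pi/2)}\sqrt{\mu_-}\bigr)-3f(0),
\]
which is continuous on $\R$, and obtain
\[
 \int f\,d\mu_{N,\varphi}=\frac1N\bigl(\tr F(H_{+,N})+\tr F(H_{-,N})\bigr)+o(1).
\]
One caveat: \Cref{prop:odd-even} gives only the nonzero eigenvalues of $T$ as $\pm$ square roots. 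The zero eigenvalues are counted correctly up to $\mathcal O(1)$ because the dimensions match, $2d_{\sigma,N}=n_{\sigma,N}+\mathcal O(1)$.

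Next I pass to the model $K_N^+\oplus(-K_N^-)$, using $\rank R_N=\mathcal O(1)$. I cannot take $F$ to be absolutely continuous, so I plan to use interlacing in the form of the counting-function bound from the proof of \Cref{lem:rank-BV}: $|N_H-N_K|\le r$. That bound gives weak closeness of the normalized spectral measures, with Kolmogorov distance $\mathcal O(N^{-1})$. Since $F$ is bounded and continuous and all spectra lie in a fixed interval, the normalized traces differ by $o(1)$; to see this I approximate $F$ uniformly by a smooth function and then apply \Cref{lem:rank-BV}. Each $K_N^\pm$ is a bounded number of Jacobi blocks with profiles $b=2q_\pm$ and $a=q_\pm$ after the affine rescaling $u=a_I+(b_I-a_I)x$. \Cref{prop:Jacobi-symbol} applied to each block, with weight $n_{\sigma,N,I}/N=(b_I-a_I)/(4\pi)+o(1)$, gives
\[
 \frac1N\tr F(\pm K)\to\frac{1}{4\pi^2}\int_I\int_0^\pi F\bigl(\pm 2q_\pm(u)(1+\cos\alpha)\bigr)\,d\alpha\,du .
\]
The intervals from the two reflection sectors and the two parity pairings cover $[0,u_0]$ for the positive sign and $[u_0,\pi]$ for the negative sign. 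I expect each interval to be covered twice in total, once from $L_+$ and once from $L_-$, with the two parities together using each edge once. Tracking this covering multiplicity is the bookkeeping step where an error could cost a factor of $2$, so I would check it against the total mass $d_{N,+}+d_{N,-}=N/2+\mathcal O(1)$. The resulting coefficient on $[0,u_0]$ should be $1/(2\pi^2)$.

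Finally I convert to the stated form. Writing $1+\cos\alpha=2\cos^2(\alpha/2)$ gives $\sqrt{2q(1+\cos\alpha)}=2\sqrt q\,|\cos(\alpha/2)|$. Under the substitution $\beta=\alpha/2\in[0,\pi/2]$, the $\pm$ pair in $F$ becomes $f(e^{i\varphi/2}2\sqrt q\cos\beta)$ integrated over $\beta\in[0,\pi]$. The Jacobian factor $2$ combines with the prefactor $1/(2\pi^2)$ to give $1/\pi^2$, which is \eqref{eq:limit-integral-form}. The $-3f(0)$ correction contributes nothing in the limit, since it is exactly the overcount at zero. The main obstacle is the step above: justifying that the diagonal-weighted profiles $b=2q$, $a=q$ from \eqref{eq:limit-parity-zero}--\eqref{eq:limit-parity-one} cover $[0,\pi]$ with exactly the right multiplicity across the reflection sectors, and handling the non-smooth test function through the counting-function bound.
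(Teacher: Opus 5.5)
Your proposal follows the paper's proof. The paper uses the same two-branch test function (its $\Phi_f$) and discards the bounded-rank correction $R_N$. It does this by uniform polynomial approximation plus trace telescoping; you use smooth approximation plus the counting-function bound of \Cref{lem:rank-BV}, and either works. It then applies \Cref{prop:Jacobi-symbol} componentwise with row density $N/(4\pi)$ per reflection sector, $N/(2\pi)$ in total, which is exactly your covering multiplicity two and your prefactor $1/(2\pi^2)$, and finishes with the half-angle substitution. The multiplicity you flagged as the main obstacle is settled as follows. In each reflection sector, the selected Gram matrix has one row per pair of consecutive edges along the half-cycle $u\in(0,\pi)$, so it sweeps $[0,\pi]$ once with mesh $4\pi/N$.

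There is one arithmetic slip to fix. For every $\mu$, the idle branch of your $F$ contributes $f(0)+f(0)$. At $\mu=0$ the correct weight is $2f(0)$, because a zero eigenvalue of the Gram matrix gives the double root $z=0$ of $z^2=0$ in \eqref{eq:Schur}. So the constant must be $-2f(0)$, and then $F$ coincides exactly with the paper's $\Phi_f$.

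With $-3f(0)$ you get $F=\Phi_f-f(0)$. Since $\dim H_{+,N}+\dim H_{-,N}=N/2+\mathcal O(1)$, your identity $\int f\,d\mu_{N,\varphi}=\frac1N\bigl(\tr F(H_{+,N})+\tr F(H_{-,N})\bigr)+o(1)$ is off by $f(0)/2$. Your closing claim that the constant ``contributes nothing in the limit'' is false for the same reason: it contributes $-\frac{1}{2\pi^2}\cdot\pi^2 f(0)=-f(0)/2$ to the right-hand side. The two errors cancel, so your final formula comes out right, but both intermediate statements are wrong. Replacing $3f(0)$ by $2f(0)$ fixes both, and then no correction term remains at all.
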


\begin{proof}
The local Jacobi symbol associated with either positive or negative
component in \Cref{lem:limit-Jacobi-reduction} is
\[
 2q_\pm(u)+2q_\pm(u)\cos\eta
 =
 4q_\pm(u)\cos^2(\eta/2),
 \qquad 0\leq\eta\leq\pi.
\]

We first justify replacing \(H_{\sigma,N}\) by
\(H_{\sigma,N}^{(0)}\).  All these real symmetric matrices have
spectra in a common bounded interval.  For a bounded continuous
\(f:\mathbb C\to\mathbb C\), define
\[
 \Phi_f(x)
 :=
 \begin{cases}
 f\!\left(e^{i\varphi/2}\sqrt{x}\right)
 +
 f\!\left(-e^{i\varphi/2}\sqrt{x}\right),
 &x\geq0,\\[1mm]
 f\!\left(e^{i(\varphi/2+\pi/2)}\sqrt{-x}\right)
 +
 f\!\left(-e^{i(\varphi/2+\pi/2)}\sqrt{-x}\right),
 &x<0.
 \end{cases}
\]
The two definitions agree at \(x=0\), so \(\Phi_f\) is continuous.

By \Cref{lem:limit-Jacobi-reduction},
\[
 \rank\bigl(
 H_{\sigma,N}-H_{\sigma,N}^{(0)}
 \bigr)=\mathcal O(1).
\]
Approximate \(\Phi_f\) uniformly on the common spectral interval by a
polynomial.  For a fixed polynomial, the trace difference under a
uniformly bounded rank-\(\mathcal O(1)\) perturbation is
\(\mathcal O(1)\), by the usual telescoping identity.  It follows that
\[
 \frac1N
 \left(
 \tr\Phi_f(H_{\sigma,N})
 -
 \tr\Phi_f(H_{\sigma,N}^{(0)})
 \right)
 \longrightarrow0.
\]
The \(\mathcal O(1)\) exceptional zero eigenvalues from the
odd--even decomposition also disappear after division by \(N\).

We may therefore apply \Cref{prop:Jacobi-symbol} separately to each
tridiagonal component of \(K_{\sigma,N}^\pm\).  Each parity component
has asymptotic row density \(N/(4\pi)\) in the \(u\)-variable.  The two
reflection sectors together have row density \(N/(2\pi)\).
Furthermore, each nonzero eigenvalue \(\mu\) of a two-step block gives
the two square roots $\pm e^{i\varphi/2}\sqrt{\mu}$
when \(\mu>0\), and $\pm e^{i(\varphi/2+\pi/2)}\sqrt{-\mu}$
when \(\mu<0\).

Finally, setting \(\alpha=\eta/2\), the two square-root signs turn
\(\alpha\in[0,\pi/2]\) into the symmetric interval
\([0,\pi]\).  Summing the contributions of all positive and negative
components gives exactly \eqref{eq:limit-integral-form}.
\end{proof}

\begin{lemma}
\label{lem:elliptic-line-densities}
Let $a_{\varphi,+}:=\cos\frac{|\varphi|}{2}$ and $ a_{\varphi,-}:=\sin\frac{|\varphi|}{2}.$
The two measures in \eqref{eq:limit-integral-form} have densities
\[
 g_{\varphi,\pm}(t)
 =
 \frac1{\pi^2}
 \EllK\!\left(
 \sqrt{
 a_{\varphi,\pm}^2-\frac{t^2}{4}
 }
 \right),
 \qquad
 |t|<2a_{\varphi,\pm},
\]
and vanish outside the corresponding intervals.  Their masses are
\[
 \int_{\mathbb R}g_{\varphi,+}(t)\,dt
 =
 1-\frac{|\varphi|}{\pi},
 \qquad
 \int_{\mathbb R}g_{\varphi,-}(t)\,dt
 =
 \frac{|\varphi|}{\pi}.
\]
Their supports are $[-2a_{\varphi,+},2a_{\varphi,+}]$
and $[-2a_{\varphi,-},2a_{\varphi,-}],$
respectively, whenever the corresponding \(a_{\varphi,\pm}\) is
nonzero.
\end{lemma}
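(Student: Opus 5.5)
The plan is to compute the two measures in \eqref{eq:limit-integral-form} directly, as pushforwards, by disintegrating in \(u\). I take \(c_\varphi(u)=\tfrac12(\cos\varphi+\cos u)\), the profile from \Cref{lem:limit-Jacobi-reduction} (see \eqref{eq:general-phase-product}). It is strictly decreasing on \([0,\pi]\) and vanishes at \(u_0=\pi-|\varphi|\).

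The masses come first and are immediate. Taking \(f\equiv1\) on each line, the first term of \eqref{eq:limit-integral-form} has mass \(\pi^{-2}\cdot\pi\cdot u_0=1-|\varphi|/\pi\). The second has mass \((\pi-u_0)/\pi=|\varphi|/\pi\). When \(a_{\varphi,\pm}=0\), the corresponding \(u\)-interval is empty, so that measure vanishes.

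For the density on the first line, fix \(u\) and set \(r=2\sqrt{c_\varphi(u)}\). The pushforward of \(\pi^{-1}d\alpha\) on \([0,\pi]\) under \(\alpha\mapsto r\cos\alpha\) is the arcsine law, with density
\[
\frac{1}{\pi\sqrt{r^2-t^2}}\quad\text{on } |t|<r .
\]
By Fubini, the line density is therefore
\[
g_{\varphi,+}(t)=\frac1{\pi^2}\int_{\{u\in[0,u_0]:\,4c_\varphi(u)>t^2\}}\frac{du}{\sqrt{4c_\varphi(u)-t^2}} .
\]
The key algebraic step uses \(\cos u=1-2\sin^2(u/2)\) and \(\cos\varphi=2\cos^2(\varphi/2)-1\). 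These give
\[
4c_\varphi(u)=4\bigl(a_{\varphi,+}^2-\sin^2(u/2)\bigr),
\qquad
4c_\varphi(u)-t^2=4\bigl(k^2-\sin^2(u/2)\bigr),
\]
where \(k^2=a_{\varphi,+}^2-t^2/4\). The domain of integration becomes \(\sin(u/2)<k\), which is automatically contained in \([0,u_0)\).

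Now substitute \(\sin(u/2)=k\sin\beta\) with \(\beta\in[0,\pi/2]\). Then
\[
du=\frac{2k\cos\beta\,d\beta}{\sqrt{1-k^2\sin^2\beta}},
\qquad
\sqrt{4(k^2-\sin^2(u/2))}=2k\cos\beta .
\]
The integral therefore reduces to \(\int_0^{\pi/2}(1-k^2\sin^2\beta)^{-1/2}\,d\beta=\EllK(k)\), and hence \(g_{\varphi,+}(t)=\pi^{-2}\EllK\bigl(\sqrt{a_{\varphi,+}^2-t^2/4}\bigr)\) for \(|t|<2a_{\varphi,+}\). Outside this interval the domain is empty, so \(g_{\varphi,+}=0\) there.

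The second line is handled identically after the reflection \(v=\pi-u\in[0,|\varphi|]\). This gives
\[
-4c_\varphi(u)=2\cos v-2\cos\varphi=4\bigl(\sin^2(|\varphi|/2)-\sin^2(v/2)\bigr)=4\bigl(a_{\varphi,-}^2-\sin^2(v/2)\bigr),
\]
and the same substitution yields the formula for \(g_{\varphi,-}\).

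For the supports, \(\EllK(k)\ge\pi/2>0\) for every \(k\in[0,1)\), so the densities are strictly positive on the open intervals \((-2a_{\varphi,\pm},2a_{\varphi,\pm})\). The supports are therefore exactly \([-2a_{\varphi,\pm},2a_{\varphi,\pm}]\).

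The only delicate point is the endpoint \(t=0\) with \(a_{\varphi,\pm}=1\), where \(k=1\) and the density has an integrable logarithmic singularity. Since we work with densities in \(L^1\), this does not affect anything; Fubini applies because all integrands are nonnegative.
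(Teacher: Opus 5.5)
Your proposal is correct and follows the paper's proof essentially step for step. Both use the arcsine pushforward in $\alpha$ for fixed $u$, the identity $\cos\varphi+\cos u=2a_{\varphi,+}^2-2\sin^2(u/2)$ with the substitution $\sin(u/2)=k\sin\beta$ producing $\EllK(k)$, the reflection $v=\pi-u$ for the second line, and masses $u_0/\pi$ and $(\pi-u_0)/\pi$ from the unit mass of each conditional arcsine law; your support argument via $\EllK(k)\ge\pi/2>0$ on the open interval is a slightly more careful version of the paper's endpoint observation $2\sqrt{c_\varphi(0)}=2a_{\varphi,+}$, $2\sqrt{-c_\varphi(\pi)}=2a_{\varphi,-}$.
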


\begin{proof}
Fix \(u<u_0\).  In the limiting integral, \(\alpha\) is uniformly
distributed on \([0,\pi]\) with measure \(d\alpha/\pi\).  The map $t=2\sqrt{c_\varphi(u)}\cos\alpha$
pushes this measure forward to the arcsine density $\frac{\mathbf 1_{\{|t|<2\sqrt{c_\varphi(u)}\}}}
 {\pi\sqrt{4c_\varphi(u)-t^2}}\,dt.$
Indeed, this follows directly from the change of variables
\(t=2\sqrt{c_\varphi(u)}\cos\alpha\).
Hence, for \(|t|<2a_{\varphi,+}\),
\[
 g_{\varphi,+}(t)
 =
 \frac1{\pi^2}
 \int_0^{u_+(t)}
 \frac{du}{
 \sqrt{2(\cos\varphi+\cos u)-t^2}
 },
\]
where $u_+(t)
 :=
 \arccos\!\left(
 \frac{t^2}{2}-\cos\varphi
 \right).$

Since $\cos\varphi+\cos u
 =
 2a_{\varphi,+}^2
 -
 2\sin^2\frac u2,$
put $\kappa_+^2
 :=
 a_{\varphi,+}^2-\frac{t^2}{4}.$
With the substitution $\sin\frac u2=\kappa_+\sin\alpha,$
we have $du
 =
 \frac{
 2\kappa_+\cos\alpha
 }{
 \sqrt{1-\kappa_+^2\sin^2\alpha}
 }
 \,d\alpha$
and $\sqrt{
 2(\cos\varphi+\cos u)-t^2
 }
 =
 2\kappa_+\cos\alpha.$
The upper endpoint \(u_+(t)\) corresponds to
\(\alpha=\pi/2\).  Therefore
\[
 g_{\varphi,+}(t)
 =
 \frac1{\pi^2}
 \int_0^{\pi/2}
 \frac{d\alpha}{
 \sqrt{1-\kappa_+^2\sin^2\alpha}
 }
 =
 \frac{\EllK(\kappa_+)}{\pi^2}.
\]

For the second line, set \(v=\pi-u\).  Then
\[
 -\cos\varphi-\cos u
 =
 \cos v-\cos\varphi
 =
 2a_{\varphi,-}^2
 -
 2\sin^2\frac v2.
\]
The same calculation, with
\[
 \kappa_-^2
 :=
 a_{\varphi,-}^2-\frac{t^2}{4},
\]
gives
\[
 g_{\varphi,-}(t)
 =
 \frac{\EllK(\kappa_-)}{\pi^2},
 \qquad
 |t|<2a_{\varphi,-}.
\]

To compute the masses, it is simpler to integrate the conditional
arcsine measures before averaging over \(u\).  Each conditional
arcsine measure has total mass one, so
\[
 \int_{\mathbb R}g_{\varphi,+}(t)\,dt
 =
 \frac{u_0}{\pi}
 =
 1-\frac{|\varphi|}{\pi}
\text{ and }
 \int_{\mathbb R}g_{\varphi,-}(t)\,dt
 =
 \frac{\pi-u_0}{\pi}
 =
 \frac{|\varphi|}{\pi}.
\]
Finally, $2\sqrt{c_\varphi(0)}
 =
 2a_{\varphi,+},
 2\sqrt{-c_\varphi(\pi)}
 =
 2a_{\varphi,-},$
which gives the stated supports.
\end{proof}

\begin{lemma}
\label{lem:limit-polynomial-rate}
For every polynomial \(P(z,\bar z)\),
\[
 \frac1N
 \sum_{z\in\spec A_N(\varphi,0)}
 P(z,\bar z)
 =
 \int_{\mathbb C}P(z,\bar z)\,d\mu_\varphi(z)
 +
 \mathcal O_{P,\varphi}(N^{-1}).
\]
Eigenvalues are counted with algebraic multiplicity.
\end{lemma}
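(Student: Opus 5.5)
Since $P(z,\bar z)$ is a polynomial, I will reduce to monomials $z^a\bar z^b$ and compute their empirical sums directly from the Jacobi reduction of \Cref{lem:limit-Jacobi-reduction}. That lemma gives the spectrum of $A_N(\varphi,0)$, counted with algebraic multiplicity, as $\mathcal O(1)$ zeros together with the pairs $z=\pm e^{i\varphi/2}\sqrt{\mu}$ for $\mu>0$ and $z=\pm e^{i(\varphi/2+\pi/2)}\sqrt{-\mu}$ for $\mu<0$, where $\mu$ runs over $\spec(H_{+,N})\cup\spec(H_{-,N})$. Multiplicities match because the Schur identity \eqref{eq:Schur} is an identity of characteristic polynomials, and the reflection splitting $L=L_+\oplus L_-$ is exact. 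The zeros contribute nothing to $z^a\bar z^b$ unless $a=b=0$, and in that case the count is exact.

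Next I sum over the pair $\pm$. The contribution vanishes unless $a+b=2k$ is even. On the positive branch, $z=e^{i\varphi/2}s$ with $s=\pm\sqrt{\mu}$ real, so $z^a\bar z^b=e^{i\varphi(a-b)/2}\mu^{k}$. On the negative branch one gets $e^{i\varphi(a-b)/2}\,i^{a-b}(-\mu)^k$. Summing over the pair gives
\[
 \frac1N\sum_z z^a\bar z^b=\frac{2e^{i\varphi(a-b)/2}}{N}\Bigl(\tr f_k^+(H_N)+i^{a-b}\tr f_k^-(H_N)\Bigr),
\]
with $f_k^\pm$ as in \Cref{lem:rank-BV} and $H_N=H_{+,N}\oplus H_{-,N}$. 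By \Cref{lem:rank-BV}, I can replace $H_N$ by $K_N^+\oplus(-K_N^-)$ at cost $\mathcal O(N^{-1})$, since the rank is $\mathcal O(1)$ and the spectra are uniformly bounded. For this decoupled matrix, $\tr f_k^+$ equals $\tr (K_N^+)^k$ and $\tr f_k^-$ equals $\tr (K_N^-)^k$ exactly, because $K_N^\pm\ge0$. This step is what removes the non-polynomial positive part.

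Then I apply \Cref{lem:quantitative-Jacobi}(1) to each of the boundedly many Jacobi components of $K_N^\pm$. Their profiles $b=2q_\pm$ and $a=q_\pm$ are $C^1$ after the affine rescaling, with $\mathcal O(N^{-1})$ sampling error up to the bounded-rank endpoint rows, which the perturbation clause of that lemma absorbs. Each component length is known only to $N(b-a)/(4\pi)+\mathcal O(1)$, which costs another $\mathcal O(N^{-1})$ after normalization. Converting from $x$ to $u$ and summing the components reproduces, to $\mathcal O(N^{-1})$, the $k$-th moment of the integral formula \eqref{eq:limit-integral-form}. This uses $4q\cos^2(\eta/2)$ with $\alpha=\eta/2$, exactly as in the proof of \Cref{lem:limit-integral-form}. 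Finally, I check that the right side of \eqref{eq:limit-integral-form} applied to $z^a\bar z^b$ equals the same expression, so it agrees with $\int P\,d\mu_\varphi$ by \Cref{lem:elliptic-line-densities}.

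The main obstacle is the uniform $C^1$ control near the sign change $u_0$, and at the degenerate endpoints where $q_\pm$ vanishes. The off-diagonal entries are $\sqrt{e_{2r}e_{2r+1}}$, and a square root of a product is not obviously $q(v_r)+\mathcal O(N^{-1})$ where $q$ is small. I would handle this by observing that only products of entries enter closed walks. For example, the squared off-diagonal $e_{2r}e_{2r+1}=q(v_r)^2+\mathcal O(N^{-2})$, as in part (2) of \Cref{lem:quantitative-Jacobi}. So the walk expansion of $\tr K^k$ involves only $C^1$ quantities. Rows within $\mathcal O(1)$ of $u_0$ are then simply charged to the bounded-rank error.
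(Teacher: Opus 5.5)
Your proposal is correct and follows the paper's proof essentially step for step. You reduce to monomials, pair $\pm z$ to get positive-part traces of $H_N$, use \Cref{lem:rank-BV} to pass to $K_N^+\oplus(-K_N^-)$, use positivity to turn these into polynomial traces, and apply \Cref{lem:quantitative-Jacobi} to the boundedly many components. The obstacle you flag is milder than you fear: for $x,y\ge0$ one has $|\sqrt{xy}-\tfrac{x+y}{2}|\le\tfrac{|x-y|}{2}$, so $\sqrt{e_{2r}e_{2r+1}}=q_\pm(v_r)+\mathcal O(N^{-1})$ holds uniformly even where $q_\pm$ vanishes. That is why the paper can apply part (1) directly, though your squared-weight walk argument also works.
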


\begin{proof}
By linearity it is enough to consider a monomial $P(z,\bar z)=z^p\bar z^q.$
The constant monomial is exact, so assume $d:=p+q\geq1.$

If \(d\) is odd, every nonzero pair \(z,-z\) contributes zero, while
zero eigenvalues contribute zero.  The corresponding limiting moment
also vanishes because \(\mu_\varphi\) is invariant under
\(z\mapsto-z\).  Thus there is nothing to prove in this case.

Suppose therefore that $d=2m, m\geq1,$
and set $H_N:=H_{+,N}\oplus H_{-,N}.$
The relation between the eigenvalues of the original reflection
blocks and those of \(H_N\) gives
\begin{align}
 \sum_{z\in\spec A_N(\varphi,0)}
 z^p\bar z^q
 &=
 2e^{i(p-q)\varphi/2}
 \tr\!\bigl((H_N)_+^m\bigr)+
 2e^{i(p-q)(\varphi/2+\pi/2)}
 \tr\!\bigl((-H_N)_+^m\bigr)
 +
 \mathcal O(1).
 \label{eq:moment-positive-parts}
\end{align}
Here \(X_+\) denotes the positive part of a real symmetric matrix
\(X\), defined by functional calculus.

Define
\[
 K_N^+
 :=
 K_{+,N}^+\oplus K_{-,N}^+,
 \qquad
 K_N^-
 :=
 K_{+,N}^-\oplus K_{-,N}^-,
\]
and
\[
 H_N^{(0)}
 :=
 K_N^+\oplus(-K_N^-).
\]
By \Cref{lem:limit-Jacobi-reduction},
\[
 \rank(H_N-H_N^{(0)})=\mathcal O(1).
\]
All relevant spectra remain in a common bounded interval, so
\Cref{lem:rank-BV} applied to
\[
 x\longmapsto(x_+)^m,
 \qquad
 x\longmapsto((-x)_+)^m
\]
gives
\begin{align*}
 &
 \left|
 \tr\!\bigl((H_N)_+^m\bigr)
 -
 \tr\!\bigl((H_N^{(0)})_+^m\bigr)
 \right|+
 \left|
 \tr\!\bigl((-H_N)_+^m\bigr)
 -
 \tr\!\bigl((-H_N^{(0)})_+^m\bigr)
 \right|
 =
 \mathcal O(1).
\end{align*}
Since \(K_N^\pm\) are positive semidefinite,
\[
 \tr\!\bigl((H_N^{(0)})_+^m\bigr)
 =
 \tr\!\bigl((K_N^+)^m\bigr) \text{ and }
 \tr\!\bigl((-H_N^{(0)})_+^m\bigr)
 =
 \tr\!\bigl((K_N^-)^m\bigr).
\]

Apply \Cref{lem:quantitative-Jacobi} separately to every connected
tridiagonal component of \(K_N^\pm\).  By
\Cref{lem:limit-Jacobi-reduction}, each component has \(C^1\)
coefficient profiles with error \(\mathcal O(N^{-1})\).  Hence each
unnormalized trace has an \(\mathcal O(1)\) error.  There are only
\(\mathcal O(1)\) components, so after division by \(N\) the total
error is \(\mathcal O(N^{-1})\).

The dimension weights and affine changes of variable are exactly
those used in \Cref{lem:limit-integral-form}, and therefore the main
terms are the corresponding moments of \(\mu_\varphi\).  Substituting
these estimates into \eqref{eq:moment-positive-parts} proves the
claim.
\end{proof}
We have now everything to immediately conclude the proof of the theorem. 

\begin{proof}[Proof of \Cref{thm:intro-limit}]
By \Cref{lem:limit-Jacobi-reduction,lem:limit-integral-form},
the measures \(\mu_{N,\varphi}\) converge weakly to the measure given
by the two integrals in \eqref{eq:limit-integral-form}.
\Cref{lem:elliptic-line-densities} identifies these two measures with
the densities \(g_{\varphi,+}\) and \(g_{\varphi,-}\) in
\eqref{eq:general-line-densities}, and also gives their masses and
supports.  This proves
\eqref{eq:general-explicit-limit} and
\eqref{eq:line-masses-intro}.

Finally, \Cref{lem:limit-polynomial-rate} gives
\eqref{eq:general-polynomial-rate}.
\end{proof}
For the Scottish flag matrix this implies then
\begin{corollary}
\label{cor:full-scottish-limit}
For every \(N\ge3\), define
\[
 \mu_N^{\mathrm{SF}}
 :=\frac1N\sum_{z\in\spec(-B_N/2)}\delta_z,
\]
with algebraic multiplicity.  Then, as \(N\to\infty\) through all
integers,
\[
 \mu_N^{\mathrm{SF}}\Longrightarrow\mu_{\pi/2}.
\]
Moreover, for every polynomial \(P(z,\bar z)\),
\[
 \frac1N\sum_{z\in\spec(-B_N/2)}P(z,\bar z)
 =
 \int_{\C}P(z,\bar z)\,d\mu_{\pi/2}(z)
 +\mathcal O_P(N^{-1}).
\]
\end{corollary}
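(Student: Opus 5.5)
We split according to the parity of $N$ and reduce each case to results already proved. By \Cref{thm:original-scottish}, the spectrum of $-B_N/2$ coincides, with algebraic multiplicity, with the zero set of one of two polynomials. For $N$ even, it is the zero set of $\chi^1_{N,\pi/2,0}$, i.e.\ the spectrum of $A_N(\pi/2,0)$, since $\det(zI-B_N)=2^N\det(-\tfrac z2 I-A_N)$. For $N$ odd, it is the zero set of $Q_{N,\pi/2}$. So $\mu_N^{\mathrm{SF}}=\mu_{N,\pi/2}$ for even $N$, and both assertions along even $N$ are exactly \Cref{thm:intro-limit} with $\varphi=\pi/2$. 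Weak convergence along all integers then follows once we prove the polynomial-moment estimate along odd $N$. All measures are supported in a fixed compact set (radius at most $2$, by the bounds in \Cref{thm:intro-central-cross}), so polynomials in $z,\bar z$ are dense in $C$ of that set by Stone--Weierstrass, and moment convergence implies weak convergence.

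For odd $N=2s+1$, we use \Cref{lem:phase-opening}. At $\varphi=\pi/2$, $Q_{N,\pi/2}$ is the characteristic polynomial of a zero-diagonal path whose edge products are $\rho_j^\circ=-\tfrac i2\sin(2\pi j/N)$, $1\le j<N$. We run the machinery of \Cref{lem:limit-Jacobi-reduction} directly on this path, skipping the reflection step. After symmetrizing with \Cref{lem:diagonal-similarity}, there is one sign change, at $j=s\to s+1$. We pass through \Cref{prop:odd-even} to the Gram matrix $K$ avoiding the sign change and write $K=e^{-i\pi/2}H$ with $H$ real symmetric. Zeroing the single row that mixes signs is a rank-$\mathcal O(1)$ change. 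It leaves $H^{(0)}=K^+\oplus(-K^-)$, where $K^\pm\ge0$ are Jacobi matrices with profiles $b=2q$, $a=q$, mesh $4\pi/N$ in the variable $u=2\pi j/N$, and $q_+(u)=\tfrac12\sin u$ on $(0,\pi)$, $q_-(u)=-\tfrac12\sin u$ on $(\pi,2\pi)$. Since $q_\pm\in C^1$, \Cref{lem:quantitative-Jacobi} gives $\mathcal O(N^{-1})$ errors, and \Cref{lem:rank-BV} controls the positive-part traces exactly as in the proof of \Cref{lem:limit-polynomial-rate}. Odd moments vanish exactly, since $Q_{N,\pi/2}(z)=z^{r_N}\prod(z^4+\eta_{j,N})$ is symmetric under $z\mapsto -z$. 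This yields the odd-$N$ limit in the integral form of \Cref{lem:limit-integral-form}, with $c_{\pi/2}(u)=\tfrac12\cos u$ replaced by $\tfrac12\sin(u)$ over $u\in(0,2\pi)$, each with weight $1/(2\pi^2)$.

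The main obstacle is checking that this odd-$N$ limit coincides with $\mu_{\pi/2}$. Two changes of variables handle it. First, substitute $u=v/2$ over the full period, or use the symmetry $u\mapsto\pi-u$ on each half. Second, note that $\{\tfrac12\sin u: u\in(0,\pi)\}$ with measure $du/(2\pi)$ has the same distribution as $\{\tfrac12\cos u: u\in(0,\pi/2)\}$ with measure $du/\pi$. So the radial profile $2\sqrt{c}\cos\alpha$ has the same law as in the even case, with masses $1/2$ on each line, $a_{\pi/2,\pm}=1/\sqrt2$. The lines must be matched too. In the odd case the phase is $e^{-i\pi/4}$: the lines are $e^{-i\pi/4}\R$ and $e^{i\pi/4}\R$, which is the same pair $\cX_{\pi/4}$, and the two line densities $g_{\pi/2,\pm}$ are equal, so the assignment of lines does not matter. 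It remains to confirm the bookkeeping: the total dimension $N$ against the row densities $N/(4\pi)$ per parity component, and the factor $2$ from $\pm\sqrt\mu$. I would check these by comparing total masses, which must equal $1$. With this, the moments for odd $N$ equal those of $\mu_{\pi/2}$ up to $\mathcal O_P(N^{-1})$, and combining with the even case proves the corollary.
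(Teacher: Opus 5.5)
Your proposal is correct and follows essentially the same route as the paper. Even $N$ reduces to \Cref{thm:intro-limit} through the characteristic-polynomial identity. Odd $N$ runs the phase-opened path with edge products $-\tfrac i2\sin(2\pi j/N)$ through the odd--even Gram matrix that avoids the single sign change, applies a bounded-rank surgery together with \Cref{lem:quantitative-Jacobi} and \Cref{lem:rank-BV}, and then folds the sine profile onto the cosine profile to match $\mu_{\pi/2}$, using $g_{\pi/2,+}=g_{\pi/2,-}$.

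The remaining differences are cosmetic. The paper rescales by $c=1/\sqrt{2i}$ so that it works with real sine weights, and it obtains the weak limit directly from \Cref{prop:Jacobi-symbol}. You instead keep the phase $e^{-i\pi/2}$ inside $K$ and deduce weak convergence from the moment estimate together with compact support.
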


\begin{proof}
For even \(N\), the result follows from \Cref{thm:intro-limit} and
\eqref{eq:twist-cancellation-intro}.  It remains to consider odd \(N\).

Write $N=2L+1,$
and set $s_{j,N}:=\sin\frac{2\pi j}{N},
    1\le j<N.$
Since \(N\) is odd,
\[
    s_{j,N}>0\quad(1\le j\le L),
    \qquad
    s_{j,N}<0\quad(L+1\le j<N).
\]
Choose square roots \(w_{j,N}^2=s_{j,N}\) by
\[
    w_{j,N}>0\quad(1\le j\le L),
    \qquad
    w_{j,N}\in i(0,\infty)\quad(L+1\le j<N),
\]
and let \(T_N\) be the symmetric zero-diagonal tridiagonal matrix
whose off-diagonal entries are \(w_{1,N},\ldots,w_{N-1,N}\).  Thus
the product of the two entries across its \(j\)-th edge is
\[
    (T_N)_{j,j+1}(T_N)_{j+1,j}
    =w_{j,N}^2
    =\sin\frac{2\pi j}{N}.
\]

We first relate \(T_N\) to the Scottish flag matrix.  Put
\[
    c:=\frac{1}{\sqrt{2i}}
    =\frac{\e^{-i\pi/4}}{\sqrt2}.
\]
Then $c^2=-\frac{i}{2}.$
Hence the edge products of \(cT_N\) are $-\frac{i}{2}\sin\frac{2\pi j}{N}.$
By \Cref{lem:phase-opening} at \(\varphi=\pi/2\), these are exactly
the edge products of the path whose characteristic polynomial is
\(Q_{N,\pi/2}\).  Since the characteristic polynomial of a
zero-diagonal tridiagonal path depends only on its edge products,
\[
    Q_{N,\pi/2}(z)
    =
    \det(zI-cT_N).
\]
Equivalently,
\[
    \det(wI-T_N)
    =
    (\sqrt{2i})^N
    Q_{N,\pi/2}\!\left(\frac{w}{\sqrt{2i}}\right).
\]

For odd \(N\), \Cref{thm:odd-scottish} gives
\[
    \det\!\left(zI+\frac12B_N\right)
    =
    Q_{N,\pi/2}(z).
\]
Consequently,
\begin{equation}
\label{eq:odd-scottish-path-scaling}
    \spec(-B_N/2)
    =
    c\,\spec(T_N)
    =
    \frac{\e^{-i\pi/4}}{\sqrt2}\spec(T_N),
\end{equation}
with algebraic multiplicity.

We next reduce \(T_N\) to a real symmetric Jacobi matrix.  The phases
of the weights \(w_{j,N}\) change only once, between
\(w_{L,N}\) and \(w_{L+1,N}\).  Apply \Cref{prop:odd-even} and choose,
as in \Cref{thm:mirrored-path}, the one of the two matrices
\(B^TB\) and \(BB^T\) whose off-diagonal entries do not contain the
product
\[
    w_{L,N}w_{L+1,N}.
\]
Call the resulting matrix \(K_N\).  Because every product of
consecutive weights occurring off the diagonal of \(K_N\) involves
weights having the same phase, \(K_N\) is real symmetric.  Its
dimension satisfies
\[
    d_N:=\dim K_N=\frac N2+\mathcal O(1).
\]

By \Cref{prop:odd-even}, the nonzero eigenvalues of \(T_N\) are
obtained from those of \(K_N\) by taking both square roots:
\[
    w^2=\mu,
    \qquad
    \mu\in\spec(K_N).
\]
Depending on which of \(B^TB\) and \(BB^T\) was chosen, there may be
one additional zero eigenvalue; this will have no effect on the
normalized limit.

Conjugating \(K_N\) by a real diagonal matrix with entries
\(\pm1\), we may arrange that all its off-diagonal entries are
nonnegative.  Denote the resulting real symmetric Jacobi matrix by
\(H_N\).  This conjugation does not change its eigenvalues.

We now identify the slowly varying coefficients of \(H_N\).  By
\eqref{eq:gram-entries}, its diagonal entries, away from a bounded
number of endpoint and central rows, are sums of two consecutive
numbers \(s_{j,N}\), whereas the absolute values of its
off-diagonal entries are of the form $\sqrt{|s_{j,N}s_{j+1,N}|}.$
Since consecutive sampling points differ by \(2\pi/N\), and since
\(d_N=N/2+\mathcal O(1)\), the \(r\)-th row corresponds to a point
\[
    x_{r,N}=\frac{r}{d_N}+\mathcal O(N^{-1}).
\]
It follows that, uniformly away from the bounded number of exceptional
rows,
\[
    (H_N)_{r,r}
    =
    2\sin(2\pi x_{r,N})+\mathcal O(N^{-1}),
\]
and
\[
    (H_N)_{r,r+1}
    =
    |\sin(2\pi x_{r,N})|+\mathcal O(N^{-1}).
\]
Changing the bounded number of exceptional rows gives a Jacobi matrix
\(\widetilde H_N\) such that
\[
    \rank(H_N-\widetilde H_N)=\mathcal O(1)
\]
and whose coefficients converge uniformly to
\[
    b(x)=2\sin(2\pi x),
    \qquad
    a(x)=|\sin(2\pi x)|,
    \qquad 0\le x\le1.
\]

We can now compute the weak limit.  Let
\(f:\mathbb C\to\mathbb C\) be bounded and continuous, and define a
continuous function \(\Phi_f:\mathbb R\to\mathbb C\) by
\[
    \Phi_f(\mu)
    :=
    \begin{cases}
    f(c\sqrt{\mu})+f(-c\sqrt{\mu}),
        &\mu\ge0,\\[1mm]
    f(ci\sqrt{-\mu})+f(-ci\sqrt{-\mu}),
        &\mu<0.
    \end{cases}
\]
The two definitions agree at \(\mu=0\).  The square-root relation
above and \eqref{eq:odd-scottish-path-scaling} imply
\[
    \int f\,d\mu_N^{\mathrm{SF}}
    =
    \frac1N\tr\Phi_f(H_N)+o(1).
\]
The \(o(1)\) only accounts for the bounded number of exceptional zero
modes.

Since \(H_N-\widetilde H_N\) has uniformly bounded rank and all the
spectra remain in a common compact interval, uniform polynomial
approximation and trace telescoping give
\[
    \frac1N
    \left(
    \tr\Phi_f(H_N)-\tr\Phi_f(\widetilde H_N)
    \right)
    \longrightarrow0.
\]
We may therefore apply \Cref{prop:Jacobi-symbol} to
\(\widetilde H_N\).  Since \(d_N/N\to1/2\),
\begin{equation}
\label{eq:odd-scottish-prelimit}
    \lim_{N\to\infty}\int f\,d\mu_N^{\mathrm{SF}}
    =
    \frac1{2\pi}
    \int_0^1\int_0^\pi
    \Phi_f\!\left(
        2\sin(2\pi x)
        +2|\sin(2\pi x)|\cos\eta
    \right)
    d\eta\,dx.
\end{equation}

It remains to identify this expression.  Put $s(x):=\sin(2\pi x).$
For \(0<x<1/2\), one has \(s(x)>0\), and the local symbol in
\eqref{eq:odd-scottish-prelimit} is $ 2s(x)+2s(x)\cos\eta
    =
    4s(x)\cos^2(\eta/2).$
Taking both square roots and then multiplying by \(c\) gives the two
points
\[
    \pm\e^{-i\pi/4}
    \sqrt{2s(x)}\cos(\eta/2).
\]
Using the two signs to extend \(\eta/2\in[0,\pi/2]\) to
\(\alpha\in[0,\pi]\), the contribution of this half of the interval
is
\begin{equation}
\label{eq:odd-positive-half}
    \frac1\pi
    \int_0^{1/2}\int_0^\pi
    f\!\left(
        \e^{-i\pi/4}
        \sqrt{2\sin(2\pi x)}\cos\alpha
    \right)
    d\alpha\,dx.
\end{equation}

For \(1/2<x<1\), one has \(s(x)<0\), and the local symbol is
\[
    2s(x)+2|s(x)|\cos\eta
    =
    -4|s(x)|\sin^2(\eta/2).
\]
Its two square roots are purely imaginary.  Since $ci=\frac{\e^{i\pi/4}}{\sqrt2},$
the contribution of this half is
\begin{equation}
\label{eq:odd-negative-half}
    \frac1\pi
    \int_{1/2}^{1}\int_0^\pi
    f\!\left(
        \e^{i\pi/4}
        \sqrt{2|\sin(2\pi x)|}\cos\alpha
    \right)
    d\alpha\,dx.
\end{equation}

We finally rewrite these two integrals in the variables used in
\eqref{eq:general-explicit-limit}.  In
\eqref{eq:odd-negative-half}, set \(v=2\pi x\) and fold
\([\pi,2\pi]\) about \(3\pi/2\).  With $u=|v-3\pi/2|\in[0,\pi/2]$
we have $|\sin v|=\cos u,$
and every \(u\in(0,\pi/2)\) has two preimages.  Hence
\[
    \eqref{eq:odd-negative-half}
    =
    \frac1{\pi^2}
    \int_0^{\pi/2}\int_0^\pi
    f\!\left(
        \e^{i\pi/4}
        \sqrt{2\cos u}\cos\alpha
    \right)
    d\alpha\,du.
\]

Similarly, in \eqref{eq:odd-positive-half}, set \(v=2\pi x\), fold
\([0,\pi]\) about \(\pi/2\), and then put
\[
    u=\pi-|v-\pi/2|\in[\pi/2,\pi].
\]
Then $\sin v=-\cos u.$
Moreover $\e^{-i\pi/4}=-\e^{3i\pi/4},$
and the minus sign is absorbed by the substitution
\(\alpha\mapsto\pi-\alpha\).  Therefore
\[
    \eqref{eq:odd-positive-half}
    =
    \frac1{\pi^2}
    \int_{\pi/2}^{\pi}\int_0^\pi
    f\!\left(
        \e^{3i\pi/4}
        \sqrt{-2\cos u}\cos\alpha
    \right)
    d\alpha\,du.
\]
The last two displays are exactly the two integrals in
\eqref{eq:general-explicit-limit} at \(\varphi=\pi/2\).  Thus
\[
    \mu_N^{\mathrm{SF}}
    \Longrightarrow
    \mu_{\pi/2}
\]
along the odd subsequence as well.

It remains to prove the quantitative statement for polynomial
moments.  By linearity it is enough to take
\[
    P(z,\bar z)=z^p\bar z^q.
\]
If \(p+q\) is odd, the two eigenvalues \(z\) and \(-z\) cancel
exactly, and the limiting moment vanishes for the same reason.  The
constant monomial is exact.

Suppose therefore that
\[
    p+q=2r,\qquad r\ge1.
\]
For real \(\mu\), the contribution of the two square roots is
\[
    \Phi_P(\mu)
    =
    2c^p\bar c^{\,q}(\mu_+)^r
    +
    2(ci)^p\overline{(ci)}^{\,q}((-\mu)_+)^r,
\]
where \(\mu_+=\max\{\mu,0\}\).  Thus the polynomial moment is a fixed
linear combination of positive-part powers of \(H_N\).

Cut the Jacobi matrix at \(x=1/2\).  This changes only a bounded
number of rows.  On each of the two resulting components the
coefficient profiles
\[
    2\sin(2\pi x),
    \qquad
    |\sin(2\pi x)|
\]
are \(C^1\), and the coefficient errors are
\(\mathcal O(N^{-1})\).  Hence
\Cref{lem:quantitative-Jacobi} gives an \(\mathcal O(1)\) error for
each unnormalized trace.  The bounded-rank changes contribute only
\(\mathcal O(1)\) by \Cref{lem:rank-BV}.  There are only two
components, so the total unnormalized error is \(\mathcal O(1)\).

After division by \(N\), the error is therefore
\(\mathcal O_P(N^{-1})\).  The same changes of variables used above
identify the main term with
\[
    \int_{\mathbb C}P(z,\bar z)\,d\mu_{\pi/2}(z).
\]
This proves the stated polynomial-moment estimate for odd \(N\).
Combining the odd and even subsequences completes the proof.
\end{proof}

\appendix
\section{Proof of the determinant formula under the reversal identity}
\label{app:mirrored}

We prove \Cref{thm:mirrored-path}.  Assume \eqref{eq:mirrored-rho} and
choose the square roots
\begin{equation}
\label{eq:mirrored-weights}
 w_j=\e^{i\theta}\sqrt{a_j},
 \qquad
 w_{2s+1-j}=i\e^{i\theta}\sqrt{a_j},
 \qquad 1\le j\le s,
\end{equation}
so that \(w_{2s+1-j}=iw_j\).  Apply \Cref{prop:odd-even} with \(n=2s+1\); the
block \(B\in\C^{(s+1)\times s}\) and the two matrices \(B^TB\) and
\(BB^T\) are the ones displayed there.

The sign change occurs at the single pair \(w_sw_{s+1}\).  If
\(s\) is odd, this pair occurs among the products \(w_1w_2,w_3w_4,\ldots\), so
\(B^TB\) avoids it; if \(s\) is even, it occurs among
\(w_2w_3,w_4w_5,\ldots\), so \(BB^T\) avoids it.  Every diagonal entry in
\eqref{eq:gram-entries} is a sum of squares \(w_j^2=\pm\e^{2i\theta}a_j\) and
therefore also lies on \(\e^{2i\theta}\R\).  All entries of the selected two-step
matrix \(K\) hence have the common phase \(\e^{2i\theta}\) times a real
number, which is \eqref{eq:K-H}, with \(d=s\) for \(s\) odd and \(d=s+1\) for
\(s\) even.  In both cases \(d\) is odd.  The off-diagonal entries of \(K\)
are products of two consecutive nonzero weights, so every off-diagonal entry of \(H\) is nonzero.

To see the reflection sign, note that \eqref{eq:mirrored-weights} gives
\begin{align*}
 w_{2s+1-j}^2&=-w_j^2
 &&(1\le j\le s),\\
 w_{2s-j}w_{2s+1-j}&=-w_jw_{j+1}
 &&(1\le j<s).
\end{align*}
By \eqref{eq:gram-entries}, these identities reverse the diagonal and
off-diagonal entries of \(H\) and change their signs.  If \(R_d\) denotes
coordinate reversal on \(\R^d\), then \(H\) is odd under coordinate reversal:
\begin{equation}
\label{eq:anti-centro}
 R_dHR_d=-H.
\end{equation}

Because \(H\) is real symmetric and tridiagonal, with every off-diagonal entry nonzero, its spectrum is
real and simple.  The identity \eqref{eq:anti-centro} pairs every eigenvalue
\(\mu\) with \(-\mu\).  Since \(d\) is odd,
\[
 \spec(H)=\{0,\pm\nu_1,\ldots,\pm\nu_m\},
 \qquad 0<\nu_1<\cdots<\nu_m,
\]
with \(d=2m+1\).

Suppose first that \(s=2m+1\).  Then \(K=B^TB\), and \eqref{eq:Schur} and
\eqref{eq:K-H} give
\begin{align*}
 \det(zI-T)
 &=z\det(z^2I_s-\e^{2i\theta}H)=z\,z^2\prod_{r=1}^m
   (z^2-\e^{2i\theta}\nu_r)
   (z^2+\e^{2i\theta}\nu_r)=z^3\prod_{r=1}^m
   (z^4-\e^{4i\theta}\nu_r^2).
\end{align*}
This is \eqref{eq:mirror-factor-odd}.  Now let \(s=2m\).  Then
\(K=BB^T\), and the second identity in \eqref{eq:Schur},
\[
 \det(tI_{s+1}-BB^T)=t\det(tI_s-B^TB),
\]
shows that, initially for \(z\ne0\),
\begin{align*}
 \det(zI-T)
 &=z\det(z^2I_s-B^TB)=z^{-1}\det(z^2I_{s+1}-BB^T)\\
 &=z^{-1}\det(z^2I_{s+1}-\e^{2i\theta}H)=z\prod_{r=1}^m
   (z^4-\e^{4i\theta}\nu_r^2),
\end{align*}
which is \eqref{eq:mirror-factor-even}.  Each nonzero root satisfies
\begin{equation*}
 z^2=\pm\e^{2i\theta}\nu_r \text{ and hence }
 z\in\e^{i\theta}\R\text{ or }
 z\in\e^{i(\theta+\pi/2)}\R.
\end{equation*}
This proves \eqref{eq:mirror-cross-conclusion}.

The preceding identity extends to \(z=0\) by polynomial continuation.
Finally, the symmetric tridiagonal matrix in \eqref{eq:symmetrization}
has every off-diagonal entry nonzero, so its eigenspace at zero is
one-dimensional by the same
recurrence argument.  The algebraic zero multiplicities in the two determinant
formulas are respectively one and three.  Therefore the Jordan blocks at zero are
\(J_1(0)\) and \(J_3(0)\), as claimed.
\qed

\section{Jordan blocks at zero when
\texorpdfstring{\(\varphi=\pi/2\)}{the phase is pi/2}}
\label{app:scottish-zero}

We complete the zero-eigenvalue calculation used in
\Cref{thm:intro-scottish}.  Write \(N=2M\), and let \(T_{+,N}\) and
\(T_{-,N}\) be the \(+1\)-reflection and \(-1\)-reflection blocks of
\Cref{sub:aligned}.

If \(N=4m\), then $\dim T_{+,N}=2m+1$ and $\dim T_{-,N}=2m-1.$
Both tridiagonal matrices have all off-diagonal entries nonzero and odd dimension, hence each has a
one-dimensional kernel.  In the notation of \Cref{thm:mirrored-path}, their
parameters are \(s_+=m\) and \(s_-=m-1\).  Exactly one of these integers is
odd.  The determinant formula proved in \Cref{thm:mirrored-path} therefore gives zero factors
\(z^3\) and \(z\) in the two sectors.  Since the geometric multiplicity is
one in each sector, the corresponding Jordan blocks are \(J_3(0)\) and
\(J_1(0)\).  Thus
\[
 A_N|_{\cG_0(A_N)}\sim J_3(0)\oplus J_1(0).
\]

The case \(N=4m+2\) is different.  There one of the two entries across an
edge vanishes, so that edge is traversed in one direction only and the sector
becomes block triangular.  The zero structure of such a coupling is the
content of the following lemma.

\begin{lemma}\label{lem:one-directional}
Let \(J_L,J_R\in\mathbb R^{d\times d}\) be symmetric tridiagonal matrices
with zero diagonal and all off-diagonal entries nonzero, where \(d\) is
odd, and let \(\gamma\ne0\).
For arbitrary \(\theta_L,\theta_R\in\R\), set
\begin{equation*}
 \mathcal T=
 \begin{pmatrix}
  \e^{i\theta_L}J_L&\gamma e_de_1^T\\
  0&\e^{i\theta_R}J_R
 \end{pmatrix}.
\end{equation*}
Then zero has algebraic multiplicity two, geometric multiplicity one, and
Jordan form \(J_2(0)\).  
\end{lemma}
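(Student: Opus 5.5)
The argument is a block-triangular computation. The algebraic multiplicity is read off the determinant, and the geometric multiplicity comes from solving the kernel equations block by block.

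\textbf{Algebraic multiplicity.} Since \(\mathcal T\) is block upper triangular,
\[
\det(zI-\mathcal T)=\det(zI-\e^{i\theta_L}J_L)\,\det(zI-\e^{i\theta_R}J_R).
\]
Each factor is the characteristic polynomial of a real symmetric tridiagonal matrix with nonzero off-diagonal entries, rescaled by a phase. By the simplicity argument preceding \Cref{lem:diagonal-similarity}, each \(J_\bullet\) has simple spectrum. By the zero-diagonal chiral symmetry \(\Xi J_\bullet\Xi=-J_\bullet\), the spectrum is symmetric about \(0\). Since \(d\) is odd, \(0\) is a simple eigenvalue of each \(J_\bullet\). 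Hence \(z=0\) has algebraic multiplicity exactly two.

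\textbf{Kernel of each block.} The kernel vector \(v_\bullet\) of \(J_\bullet\) is explicit from the continuant recursion. Zero diagonal forces \(v_2=v_4=\dots=0\). The odd entries are then determined recursively from \(v_1\), and they are all nonzero because the off-diagonal entries are nonzero. In particular, the first and last entries satisfy \((v_\bullet)_1\ne0\) and \((v_\bullet)_d\ne0\).

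\textbf{Geometric multiplicity.} Write a kernel vector of \(\mathcal T\) as \((x,y)\). The second row gives \(J_Ry=0\), so \(y=c\,v_R\). The first row then reads
\[
\e^{i\theta_L}J_Lx=-\gamma c\,(v_R)_1\,e_d.
\]
This equation is solvable iff the right-hand side is orthogonal to \(\ker J_L^T=\ker J_L=\operatorname{span}(v_L)\), where the transpose is real and \(J_L\) is symmetric. That condition is \(\gamma c\,(v_R)_1(v_L)_d=0\). Since \(\gamma\), \((v_R)_1\) and \((v_L)_d\) are all nonzero, it forces \(c=0\). Then \(y=0\) and \(x\in\ker J_L\), so \(\dim\ker\mathcal T=1\).

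\textbf{Conclusion.} Algebraic multiplicity two together with geometric multiplicity one gives the Jordan form \(J_2(0)\) on the generalized kernel.

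\textbf{Main obstacle.} The key step is the solvability (Fredholm) criterion, and in particular the observation that the kernel vectors of a zero-diagonal Jacobi matrix of odd size have nonzero endpoint entries. The nonvanishing of \((v_L)_d\) and \((v_R)_1\) is exactly what prevents a second eigenvector from appearing.
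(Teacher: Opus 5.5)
Your proposal is correct and follows the paper's proof essentially step for step: the block-triangular determinant gives the algebraic multiplicity, the null vectors have nonvanishing endpoint entries, and pairing the first block equation with \(v_L\) forces \(c=0\). The only differences are cosmetic: you obtain the zero eigenvalue from the chiral symmetry and the endpoint nonvanishing from the explicit parity structure of the null vector, whereas the paper uses oddness of the characteristic polynomial and a forward-elimination argument.
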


\begin{proof}
Write \(A=\e^{i\theta_L}J_L\),
\(D=\e^{i\theta_R}J_R\), and
\(\Gamma=\gamma e_de_1^T\).
A zero-diagonal real symmetric tridiagonal matrix of odd size has a zero eigenvalue because its
characteristic polynomial is odd.  Because every off-diagonal entry is nonzero, that eigenvalue is simple.
Choose nonzero vectors
\begin{equation*}
 J_Lv_L=0,
 \qquad J_Rv_R=0.
\end{equation*}
Every endpoint coordinate of \(v_L\) and \(v_R\) is nonzero.  Indeed,
if the first coordinate vanished, the first row and the three-term
recurrence would force the vector to vanish successively from the left.
The same argument from the last row proves the assertion for the last
coordinate.
Block triangularity gives
\begin{equation*}
 \det(zI-\mathcal T)=\det(zI-A)\det(zI-D),
\end{equation*}
so zero has algebraic multiplicity two.  If
\(\mathcal T(x,y)^T=0\), then
\begin{equation*}
 Dy=0,
 \qquad Ax+\Gamma y=0.
\end{equation*}
The first equation gives \(y=cv_R\), and the second becomes
\begin{equation*}
 Ax=-\gamma c(v_R)_1e_d.
\end{equation*}
Because \(A^T=A\) and \(Av_L=0\), pairing with \(v_L\) yields
\begin{equation*}
 0=v_L^TAx
 =-\gamma c(v_L)_d(v_R)_1.
\end{equation*}
All three factors other than \(c\) are nonzero, hence \(c=0\).  Thus
\begin{equation*}
 \ker\mathcal T=\operatorname{span}\{(v_L,0)^T\},
 \qquad \dim\ker\mathcal T=1.
\end{equation*}

Since the algebraic multiplicity is two and the geometric multiplicity is
one, the Jordan block for zero is \(J_2(0)\).  The transpose case follows by
interchanging left and right.
\end{proof}

Now let \(N=4m+2\), so \(M=2m+1\).  In the quadratic-phase basis, the central
edge joins the indices \(m\) and \(m+1\).  Since $\omega^{m+1/2}=\e^{\pi i/2}=i,$
the two entries across this edge are
\[
 \alpha_{m,N}=\frac{1+i\omega^{m+1/2}}2=0,
 \qquad
 \beta_{m+1,N}=\frac{1+i\omega^{-m-1/2}}2=1.
\]
The zero-product edge occurs at the middle of both blocks obtained from the reflection.  In the
\(+1\)-reflection block \(T_{+,N}\), of dimension \(2m+2\), it separates two
diagonal path blocks of size \(m+1\); in the \(-1\)-reflection block
\(T_{-,N}\), of dimension \(2m\), it separates two diagonal path blocks of
size \(m\).  The remaining directed entry makes each folded matrix block
triangular rather than a direct sum.  Thus, if \(m\) is even, the singular
sector is \(T_{+,N}\) and its two diagonal blocks have the odd size
\(d=m+1\), while \(T_{-,N}\) has even diagonal blocks of size \(m\) and is
invertible at zero.  If \(m\) is odd, the roles are reversed: the singular
sector is \(T_{-,N}\) with odd block size \(d=m\), and \(T_{+,N}\) has even
diagonal blocks of size \(m+1\) and is invertible at zero.  The
invertibility follows from
\[
 \det J_{2r}
 =(-1)^r\prod_{j=1}^{r}(J_{2j-1,2j})^2\ne0
\]
for a zero-diagonal symmetric tridiagonal matrix with nonzero
off-diagonal entries.

After separate diagonal similarity to a symmetric matrix, let
\(J_L,J_R\in\R^{d\times d}\) denote the resulting symmetric tridiagonal matrices with zero diagonal and all off-diagonal entries nonzero, where \(d\) is the odd dimension specified
above.  The block containing the zero eigenvalue is similar, up to transpose, to
\[
 \begin{pmatrix}
  \e^{\pi i/4}J_L&\gamma e_de_1^T\\
  0&\e^{-\pi i/4}J_R
 \end{pmatrix}.
\]
Each diagonal block has a
simple zero eigenvalue, where \(\gamma\ne0\).  By
\Cref{lem:one-directional}, zero has algebraic multiplicity two and geometric
multiplicity one in this sector.  Hence
\[
 A_N|_{\cG_0(A_N)}\sim J_2(0).
\]

\section{The zero eigenspace when
\texorpdfstring{\(\tau=-1\)}{the twist is -1}}
\label{app:antiperiodic-zero}

\begin{proposition}
\label{prop:antiperiodic-zero}
Let \(N=4m+2\ge6\).  The zero eigenspace of the matrix
\[
 A_N^{-1}(\pi/2,\pi/2)
\]
has dimension two.  Since its algebraic multiplicity is also two, both
zero Jordan blocks are one-dimensional.
\end{proposition}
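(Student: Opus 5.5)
The plan is to compute the zero eigenspace directly through a transfer-matrix argument, in the same spirit as the proof of \Cref{thm:odd-scottish}. Then we combine it with the algebraic multiplicity already known.

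\emph{Step 1: algebraic multiplicity.} By \eqref{eq:twist-cancellation-intro} and the $N=4m+2$ case of \Cref{thm:intro-scottish}, the characteristic polynomial of $A_N^{-1}(\pi/2,\pi/2)$ equals $z^2\prod_{r}(z^4+\tau_{r,N})$. So zero has algebraic multiplicity exactly two, and it suffices to prove that the kernel is two-dimensional.

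\emph{Step 2: transfer matrices.} Write $s_j=\sin(2\pi j/N)$. Since $\cos(2\pi j/N+\pi/2)=-s_j$, the equation $A_N^{-1}(\pi/2,\pi/2)x=0$ reads
\[
\tfrac i2(x_{j+1}+x_{j-1})=s_jx_j ,
\]
with the antiperiodic convention $x_{j+N}=-x_j$ coming from $\tau=-1$. Equivalently,
\[
\binom{x_{j+1}}{x_j}=T_j\binom{x_j}{x_{j-1}},\qquad T_j=\begin{pmatrix}-2is_j&-1\\1&0\end{pmatrix},
\]
and each $T_j$ has $\det T_j=1$. Solutions correspond bijectively to vectors $v$ with $\mathcal M_Nv=-v$, where $\mathcal M_N=T_{N-1}\cdots T_0$. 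Hence $\dim\ker=2$ if and only if $\mathcal M_N=-I$, which is what we will show.

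\emph{Step 3: symmetry reductions.} Let $D=\diag(1,-1)$ and $K=\begin{pmatrix}0&1\\1&0\end{pmatrix}$. We use three elementary identities.
\begin{itemize}
\item Since $N/2=2m+1$, we have $s_{j+N/2}=-s_j$. This gives $DT_jD=-T_{j+N/2}$. Because the half-period contains an odd number $2m+1$ of factors, it follows that $\mathcal M_N=-\,D\mathcal M_hD\,\mathcal M_h$, where $\mathcal M_h=T_{2m}\cdots T_0$.
\item Since $s_0=0$, we have $T_0=KD$.
\item One checks directly that $KT_jK=T_j^{-1}$.
\end{itemize}
Now set $P=T_{2m}\cdots T_1$. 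The reflection $s_{N/2-j}=s_j$ gives $T_{N/2-j}=T_j$ for $1\le j\le 2m$, so the word $P$ is a palindrome. Therefore
\[
KPK=T_{2m}^{-1}\cdots T_1^{-1}=(T_1\cdots T_{2m})^{-1}=P^{-1}.
\]
Using $\mathcal M_h=PKD$, we compute
\[
(D\mathcal M_h)^2=DPKD\,DPKD=D\,(PKPK)\,D=D\,P P^{-1}\,D=I .
\]
Combined with the first identity, this yields $\mathcal M_N=-I$.

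\emph{Step 4: conclusion.} Every initial vector therefore generates an antiperiodic null solution, so $\dim\ker A_N^{-1}(\pi/2,\pi/2)=2$. Since geometric and algebraic multiplicities are both two, both Jordan blocks at zero are one-dimensional.

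The main obstacle is bookkeeping in Steps 2 and 3. One must fix the exact boundary convention of $S_{-1}$ (whether it produces $\mathcal M_Nv=-v$ or the same condition with the inverse monodromy). One must also confirm the index ranges of the palindrome $T_{N/2-j}=T_j$. Neither changes the conclusion, since $-I$ is invariant under both ambiguities, but the signs should be checked carefully.
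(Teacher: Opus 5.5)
Your proof is correct, and the sign conventions you flag come out as you wrote them. For $S_{-1}$ one has $x_{-1}=-x_{N-1}$ and $x_N=-x_0$, so $\ker A_N^{-1}(\pi/2,\pi/2)\cong\ker(\mathcal M_N+I)$. The identities $DT_jD=-T_{j+N/2}$, $T_0=KD$, $KT_jK=T_j^{-1}$ and $T_{2m+1-j}=T_j$ all hold; for $N=6$, direct multiplication indeed gives $\mathcal M_6=-I$.

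Your route differs genuinely from the paper's. The paper removes the twist with the gauge $U=\diag(\e^{i\pi j/N})$ and passes to the quadratic-phase basis. There the matrix becomes block upper triangular, with two odd-size zero-diagonal tridiagonal blocks $T_L,T_R$ coupled by $\Gamma=ie_1e_M^T-ie_Me_1^T$. It then builds a second null vector by checking the solvability condition $u_L^T\Gamma v_R=0$. That step needs explicit endpoint ratios of the null vectors and a multiset identity between the numbers $\cos((2r+2)\pi/N)$ and $\sin((2r+1)\pi/N)$. It only yields $\dim\ker\ge2$; the upper bound comes from the algebraic multiplicity. Its advantage is that it sits in the same block-triangular picture as \Cref{lem:one-directional}. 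That makes visible why the periodic matrix at $N=4m+2$ has $J_2(0)$ (the pairing is nonzero) while the antiperiodic one splits (the pairing vanishes).

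Your position-space argument needs no explicit products. Three symmetries force $\mathcal M_N=-I$: the half-period sign flip over an odd number $2m+1$ of factors, the reflection $j\mapsto N/2-j$, and the time-reversal identity $KT_jK=T_j^{-1}$ together with $T_0=KD$ (from $s_0=0$). This gives $\dim\ker=2$ exactly and in one stroke, mirroring the paper's own transfer-matrix proof for odd $N$ in \Cref{thm:odd-scottish}. The algebraic multiplicity from \eqref{eq:twist-cancellation-intro} and \Cref{thm:intro-scottish} is then needed only for the Jordan-form conclusion. As a consistency check, the same computation with $N/2$ even and $\tau=1$ gives $\mathcal M_N=I$ when $4\mid N$. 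That matches the two zero Jordan blocks $J_3(0)\oplus J_1(0)$ found there.
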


\begin{proof}
Put \(M=N/2=2m+1\), \(\delta=\pi/N\), and
\(U=\diag(\e^{i\delta j})_{j=0}^{N-1}\).  Then
\[
 U^{-1}S_{-1}U=\e^{-i\delta}S,
\]
so
\[
 \widehat A:=U^{-1}A_N^{\tau=-1}(\pi/2,\pi/2)U
 =D_N(\pi/2)+\frac{i}{2}
  \bigl(\e^{-i\delta}S+\e^{i\delta}S^{-1}\bigr).
\]
In the periodic quadratic-phase basis $\psi_k(j)=N^{-1/2}\exp\!\left(\frac{\pi i j^2}{N}\right)\omega^{jk},$
define
$ \alpha_k:=\frac{i}{2}(1+\omega^{k+1}), \beta_k:=\frac{i}{2}(-1+\omega^{-k}).$ Then
\[
 \widehat A\psi_k=\alpha_k\psi_{k+1}+\beta_k\psi_{k-1}.
\]
Here \(\alpha_{M-1}=0\) and \(\beta_0=0\), while
\(\beta_M=-i\) and \(\alpha_{N-1}=i\).  Set $\Gamma:=i e_1e_M^T-i e_Me_1^T.$
Ordering first \(\psi_0,\ldots,\psi_{M-1}\) and then
\(\psi_M,\ldots,\psi_{N-1}\) gives a block triangular matrix
\[
 \widehat A=
 \begin{pmatrix}T_L&\Gamma\\0&T_R\end{pmatrix},
\]
which defines the diagonal blocks \(T_L\) and \(T_R\).  Both are
\(M\times M\) zero-diagonal tridiagonal matrices with every
off-diagonal entry nonzero; \(M\) is odd.  Their products \(\rho_j\)
satisfy
\[
 \rho_k=\frac{i}{2}\sin\frac{2\pi(k+1)}N\in i(0,\infty)
 \quad(0\le k\le M-2)
\]
for \(T_L\), and \(\rho_k\in-i(0,\infty)\) for
\(M\le k\le N-2\) in \(T_R\).  Thus, by \Cref{lem:diagonal-similarity}, each
block is diagonally similar to a
phase multiple of a real symmetric tridiagonal matrix with every off-diagonal entry nonzero, and hence has a simple zero eigenvalue.

Choose nonzero vectors \(u_L,v_L,v_R\) with
\[
 u_L^TT_L=0,\qquad T_Lv_L=0,\qquad T_Rv_R=0,
\]
and normalize \(u_L,v_R\) by \((u_L)_1=(v_R)_1=1\).  The zero-diagonal recurrences show that coordinates
\(2,4,\ldots,M-1\) vanish and give
\begin{align*}
 (u_L)_M
 &=(-1)^m\prod_{r=0}^{m-1}
   \frac{\beta_{2r+1}}{\alpha_{2r+1}}=1 \text{ and }
 (v_R)_M=(-1)^m\prod_{r=0}^{m-1}
   \frac{\alpha_{M+2r}}{\beta_{M+2r+2}}=1.
\end{align*}
Indeed,
\[
 \alpha_k=i\e^{\pi i(k+1)/N}\cos\frac{\pi(k+1)}N,
 \qquad
 \beta_k=\e^{-\pi i k/N}\sin\frac{\pi k}N,
\]
and in both products the phases cancel, while
\[
 \left\{\cos\frac{(2r+2)\pi}{N}:0\le r<m\right\}
 =\left\{\sin\frac{(2r+1)\pi}{N}:0\le r<m\right\}
\]
as multisets.  Consequently,
\[
 u_L^T\Gamma v_R=i-i=0.
\]
The equation \(T_Lx=-\Gamma v_R\) is therefore solvable, because the left
nullspace of \(T_L\) is spanned by \(u_L\).  Hence
\((x,v_R)^T\) is a zero eigenvector independent of
\((v_L,0)^T\).  The dimension of the nullspace is at least two.  By
\eqref{eq:twist-cancellation-intro} and \Cref{thm:intro-scottish}, the
algebraic multiplicity is exactly two, so the dimension of the nullspace is exactly two and
the zero Jordan form is \(J_1(0)\oplus J_1(0)\).
\end{proof}
\FloatBarrier

\end{document}